\numberwithin{equation}{section}
 \font\tencyr=wncyr10 
\font\tencyi=wncyi10 
\font\tencysc=wncysc10 
\def\rus{\tencyr\cyracc}
\def\rusi{\tencyi\cyracc}
\def\rusc{\tencysc\cyracc}
\newtheorem{thm}{Theorem}[section]
\newtheorem{conj}[thm]{Conjecture}
\newtheorem{lm}[thm]{Lemma}
\newtheorem{cl}[thm]{Corollary}
\theoremstyle{remark}
\newtheorem{rmk}[thm]{Remark}
\newtheorem*{rem}{Remark}
\theoremstyle{definition}
\newtheorem{ex}[thm]{Example}
\newtheorem{df}{Definition}
\newenvironment{E6}[6]{%
{\small\begin{tabular}{@{}c@{}}
{#1}--{#2}--\lower3.5ex\vbox{\hbox{{#3}\rule{0ex}{2.5ex}}
\hbox{\hspace{0.4ex}\rule{.1ex}{1ex}\rule{0ex}{1.4ex}}\hbox{{#6}\strut}}--{#4}--{#5}
\end{tabular}}}
\newcommand {\ah}{{\mathfrak a}}
\newcommand {\be}{{\mathfrak b}}
\newcommand {\g}{{\mathfrak g}}
\newcommand {\h}{{\mathfrak h}}
\newcommand {\n}{{\mathfrak n}}
\newcommand {\es}{{\mathfrak s}}
\newcommand {\te}{{\mathfrak t}}
\newcommand {\ut}{{\mathfrak u}}
\newcommand {\sln}{\mathfrak{sl}_n}
\newcommand {\co}{{\mathcal O}}
\newcommand {\BZ}{{\mathbb Z}}
\newcommand {\BN}{{\mathbb N}}
\newcommand {\BR}{{\mathbb R}}
\newcommand{\bow}{\boldsymbol{W}}
\newcommand {\esi}{\varepsilon}
\newcommand{\lb}{\lambda}
\newcommand{\ap}{\alpha}
\newcommand{\vp}{\varphi}
\renewcommand{\le}{\leqslant}
\renewcommand{\ge}{\geqslant}
\newcommand{\curge}{\succcurlyeq}
\newcommand{\curle}{\preccurlyeq}
\newcommand {\sfr}{\eus R}
\newcommand{\eus}{\EuScript}
\newcommand {\Aut}{{\mathsf{Aut}}}
\newcommand {\hot}{{\mathsf{ht}}}
\newcommand {\Ima}{{\mathsf{Im}}}
\newcommand {\ord}{{\mathsf{ord}}}
\newcommand {\rk}{{\mathsf{rk\,}}}
\newcommand {\AN}{\mathfrak{An}}
\newcommand {\anp}{\AN(\eus P)}
\newcommand {\AND}{\AN(\Delta^+)}
\newcommand {\anod}{\AN(\Delta(1)\!)}
\newcommand {\mdt}{\eus M_{\Delta(1)}(t)}
\newcommand {\ndt}{\eus N_{\Delta(1)}(t)}
\newcommand {\fX}{\mathfrak X}
\newcommand {\xdo}{\mathfrak X_{\Delta(1)}}
\newcommand {\tri}{\mathfrak{sl}_2}
\newcommand {\GR}[2]{{\textrm{{\bf #1}}}_{#2}}
\newcommand {\GRt}[2]{{\tilde{\textrm{{\bf #1}}}}_{#2}}
\newcommand {\ov}{\overline}
\newcommand {\beq}{\begin{equation}}
\newcommand {\eeq}{\end{equation}}
\definecolor{my_color}{rgb}{0,0.5,0.5}
\definecolor{MIXT}{rgb}{0.7,0.1,0.2}
\definecolor{MIX}{rgb}{0.4,0.3,0.6}
\begin{document}
\hfill {{
\scriptsize November 27, 2014}}
\vskip1ex

\title[Antichains in weight posets]
{{\color{MIX}Antichains in weight posets associated with gradings of simple Lie algebras}}
\author[D.\,Panyushev]{Dmitri I.~Panyushev}
\address{Institute for Information Transmission Problems of the R.A.S., 
\hfil\break\indent B. Karetnyi per. 19, Moscow 
127994, Russia}
\email{panyushev@iitp.ru}
\address[]{Independent University of Moscow,
Bol'shoi Vlasevskii per. 11, 119002 Moscow, \ Russia}
\subjclass[2010]{06A07, 17B20, 20F55}
\keywords{Graded poset, root system, graded Lie algebra}
\begin{abstract}
For a reductive Lie algebra $\h$ and a simple finite-dimensional $\h$-module $V$, the set of 
weights of $V$, $\eus P(V)$, is equipped with a natural partial order.  We consider antichains 
in the weight poset $\eus P(V)$ and a certain operator $\fX$ acting on antichains. Eventually, we impose 
stronger constraints on $(\h,V)$ and stick to the case in which $\h=\g(0)$ and $V=\g(1)$ for 
a $\BZ$-grading $\g=\bigoplus_{i\in\BZ}\g(i)$ of a simple Lie algebra $\g$. Then $V$ is a weight 
multiplicity free $\h$-module and $\eus P(V)$ can be regarded as a 
subposet of $\Delta^+$, where $\Delta$ is the root system of $\g$. Our goal is to demonstrate that the weight posets associated with $\BZ$-gradings exhibit many good properties that are similar to those of 
$\Delta^+$ that are observed earlier in \cite{oper-X}.
\end{abstract}
\maketitle

\tableofcontents

\section{Introduction}  
\label{sec:intro}

\noindent
Let $(\eus P, \curle)$ be a finite poset and $\AN(\eus P)$ the set of all antichains in $\eus P$.
If $\Gamma\in\AN(\eus P)$, then 
\begin{gather*}
I_+(\Gamma)=I(\Gamma):=\{\nu\in\eus P\mid \gamma\curle\nu \text{ for some } \gamma\in\Gamma\} 
\ \text{  and}  \\ 
I_-(\Gamma):=\{\nu\in\eus P\mid \nu\curle\gamma \text{ for some } \gamma\in\Gamma\} 
\end{gather*}
We say that $I(\Gamma)$ (resp. $I_-(\Gamma)$) is  the {\it upper} (resp.  {\it lower}) {\it ideal of\/} 
$\eus P$ generated by $\Gamma$. For any $M\subset \eus P$, let $\min(M)$ (resp. $\max(M)$) denote 
the set of minimal (resp. maximal) elements of $M$ with respect to `$\curle$'. 
Then  $\Gamma=\min(I(\Gamma))=\max(I_-(\Gamma))=I(\Gamma)\cap I_-(\Gamma)$ and the above
formulae  provide one-to-one correspondences between the antichains, upper ideals, and lower ideals of $\eus P$.
\begin{df}
The  {\it reverse operator\/} associated with $\eus P$, $\fX=\fX_\eus P: \anp\to \anp$, is defined by $\fX(\Gamma)=\max(\eus P\setminus I(\Gamma))$. 
\end{df}
\noindent
It is easily seen that $\fX\bigl(\min(\eus P\setminus I_-(\Gamma))\bigr)=\Gamma$,
hence $\fX$ is invertible.
The {\it order\/} of $\fX$, denoted $\ord(\fX)$, is the order of the cyclic group $\langle\fX\rangle$
generated by $\fX$ in the permutation group on $\anp$. By definition, an $\fX$-{\it orbit\/} is an orbit of the group $\langle\fX\rangle$ in $\anp$. 
The early history of the reverse operators, which goes back to 1970's, is related to the special case in 
which $\eus P$ is the Boolean algebra $\mathbb B^n$, see \cite{deza} and references therein. 
The study of $\fX$ in the general setting was initiated in~\cite{flaas1,flaas2}.

For a natural class of posets $\eus P$, we wish to know
$\#\anp$, $\max_{\Gamma\in\anp}(\#\Gamma)$, $\ord(\fX)$, and other properties of the $\fX$-orbits in
$\anp$.
It is also of interest to  determine the following  refinements ($t$-analogues) of $\#\anp$:

1) \ Set $\eus N_\eus P(t)=\sum_{\Gamma\in\anp} t^{\#\Gamma}=\sum N_it^i$. Here the coefficient of 
$t^i$ is the number of antichains of size $i$. Consequently, $\eus N'_\eus P(1)/\eus N_\eus P(1)$ 
is the average value of the size of all antichains in $\eus P$. It is clear that $N_1=\#\eus P$ and 
$\deg\eus N_\eus P(t)=\max_{\Gamma\in\anp}(\#\Gamma)$.

2) \ Set $\eus M_\eus P(t)=\sum_{\Gamma\in\anp} t^{\#I(\Gamma)}=\sum M_it^i$. Here the coefficient of 
$t^i$ is the number of upper ideals of cardinality $i$. Consequently, 
$\eus M'_\eus P(1)/\eus M_\eus P(1)$ is the average value of the cardinality of all upper ideals.
It is clear that  $\eus M_\eus P(t)$ is monic and $\deg\eus M_\eus P(t)=\#\eus P$.
\par
These two $t$-analogues of $\anp$ will be referred to as {\it the\/ $\eus M$- and $\eus N$-polynomials\/} of $\eus P$.

\noindent
Let $\Delta^+$ be a set of positive roots of a reduced irreducible root system $\Delta$. Then $\Delta^+$
is a graded poset, and there is a number of striking results on $\AND$ and $\fX_{\Delta^+}$.
We refer to \cite{bour,hump} for basic definitions and properties of root systems.
Let $\Pi=\{\ap_1,\dots,\ap_n\}$ be the set of {\it simple roots\/} in $\Delta^+$,
$W$ the {\it Weyl group\/}, and $h$ the {\it Coxeter number\/} of $\Delta$. 
The partial order in $\Delta^+$ is 
defined by the condition that $\gamma$ covers $\mu$ if and only if $\gamma-\mu\in\Pi$. 
For $\gamma\in\Delta$, let $[\gamma:\ap_i]$ be the coefficient of $\ap_i$ 
in the expression of $\gamma$ via the simple roots.
The {\it height\/} of $\gamma$ is $\hot(\gamma)=\sum_{i=1}^n [\gamma:\ap_i]$.
Then  

\textbullet\ \  $\min(\Delta^+)=\Pi$ and $\max(\Delta^+)=\{\theta\}$, where $\theta$ is the highest root;

\textbullet\ \ $\Delta^+$ is graded, with rank function $\gamma\mapsto \hot(\gamma)$. Recall that 
$1\le \hot(\gamma)\le h-1$;

\textbullet\ \ $\# \AND=\displaystyle\prod_{i=1}^n \frac{h+m_i+1}{m_i+1}$, where $m_1,\dots,m_n$ are the exponents of $W$, see \cite{cp2}.

\textbullet\ \ if $\Delta$ is of type $\GR{A}{n}$, then $\# \AND$ is the $(n+1)$-th {\it Catalan number\/} 
and the coefficients of $\eus N_{\Delta^+}(t)$ are the {\it Narayana numbers}.
No general uniform expression for  $\eus N_{\Delta^+}(t)$ is known; but case-by-case computations 
show that $\eus N_{\Delta^+}(t)$ is always palindromic and unimodal.

It was conjectured in \cite[Conjecture\,2.1]{oper-X} that $\fX_{\Delta^+}$ satisfies the following properties:
\begin{itemize}
\item[\sf (i)] \ $(\fX_{\Delta^+})^h$ is the permutation on $\AND$  induced by $-w_0$, where $w_0\in W$ 
is the longest element (=\, unique element taking $\Delta^+$  to $-\Delta^+$). In particular, 
$\ord(\fX_{\Delta^+})\in \{h,2h\}$ and $\ord(\fX_{\Delta^+})=h$ if and only if $w_0=-1$;
\item[\sf (ii)] \ Let $\co$  be an $\fX_{\Delta^+}$-orbit in $\AND$. Then the average value 
$\bigl(\sum_{\Gamma\in\co}  \#\Gamma\bigr)/\#\co$ does not depend on $\co$ and equals $\#(\Delta^+)/h=n/2$.
\end{itemize}
This conjecture has been proved in \cite{ast13}. However, similar conjectures in \cite{oper-X}
for several related graded posets (e.g. for $\eus P=\Delta^+\setminus\Pi$) are still open.

We are going to describe a natural class of posets having the similar properties. As there is a simple Lie 
algebra behind $\Delta$, it is natural to explore posets related to representations of semisimple 
(reductive) Lie algebras. Let $\eus P(V)$ be the set of weights of an irreducible 
finite-dimensional representation $V$ of a reductive Lie algebra. Then $\eus P(V)$ is a graded poset. 
We show that  $\eus P(V)\simeq \eus P(V^*)$ and  $\fX_{\eus P(V)}$ can naturally be written as a 
product of two involutions, see Remark~\ref{rem:two-inv}. Another promising observation is that if $V$ is 
{\it weight multiplicity free\/} (=\,\textsf{wmf}), then $\eus P(V)$ is rank symmetric, rank unimodal, and  
Sperner (see precise definitions in Section~\ref{sec:general}). 
However, the class of all (or even \textsf{wmf}) weight posets is too large for interesting properties, and 
we stick to \textsf{wmf} representations associated with $\BZ$-gradings of a simple Lie algebra $\g$.
Given a $\BZ$-grading $\g=\bigoplus_{i\in\BZ}\g(i)$, we are interested in the weight poset, $\Delta(1)$,
of the $\g(0)$-module $\g(1)$. Assuming that $\Delta$ is the root system of $\g$, we may regard 
$\Delta(1)$ as a subposet of $\Delta^+$. Our aim is to demonstrate
that the posets of the form $\Delta(1)$ exhibit many good properties that are akin
to the above properties of $\Delta^+$. The basic material on gradings of $\g$ is gathered in Section~\ref{subs:gradings}.

We consider in details two simplest classes of $\BZ$-gradings:

\textbullet \ \ the {\it abelian\/} gradings,\  $\g=\g(-1)\oplus\g(0)\oplus\g(1)$ (Section~\ref{sect:ab-case});

\textbullet \ \ the {\it extra-special\/} gradings,\ $\g=\bigoplus_{j=-2}^2\g(j)$ 
\ \& \ $\dim\g(2)=1$ (Section~\ref{sect:ext-case}).
\\
To a great extent, our results for them are similar, but the proofs become more tricky in the extra-special 
case.  Using the Kostant-Macdonald identity~\cite[Cor.\,2.5]{macd72},
we prove that $\mdt=\prod_{\gamma\in\Delta(1)}\frac{1-t^{\hot(\gamma)+1}}{1-t^{\hot(\gamma)}}$
(Theorems~\ref{thm:M-polinom-ab} and~\ref{thm:mdt-extra}); in particular, 
$\#\anod=\prod_{\gamma\in\Delta(1)} \frac{\hot(\gamma)+1}{\hot(\gamma)}$. 
All the respective $\eus N$-polynomials are computed, too.
A relationship between $\anod$  and certain elements of the Weyl group $W$ of $\g$ is established
(Theorems~\ref{thm:ab-case} and~\ref{thm:extra-case}). We also provide a model of the poset 
$\anod$ related to the weight poset of a certain representation of the dual Lie algebra $\g^\vee$, see 
Theorems~\ref{thm:isom-poset-ab} and~\ref{thm:isom-poset-extra}. 
Yet certain nice features of the extra-special case have no `abelian' analogues. For instance, we have
$\#\anod=\#\Pi_l{\cdot}(h-1)$, where $\Pi_l$ is the set of long simple roots; $\mdt$ is closely related 
to the Lusztig $t$-analogue of the zero weight multiplicity for the representation of $\g^\vee$ with highest 
weight $\theta^\vee$, see 
Remark~\ref{rem:yet-another}; $\deg\ndt\le 3$
(Theorem~\ref{thm:ndt-extra}) and there is a nice explicit formula for $\ndt$ if $\g$ is of type {\bf ADE} 
(Corollary~\ref{cl:extra-ndt-ADE}).  

Section~\ref{sect:conj} contains numerous examples and our general conjectures on $\anod$ and 
$\fX_{\Delta(1)}$. In particular, we conjecture that 
\par(1) \ our formula for $\mdt$, which us proved in the abelian and extra-special cases, remains valid for {\bf all} $\BZ$-gradings of $\g$;
\par(2) \ if $\g(1)$ is a simple $\g(0)$-module, then  $\ord(\fX_{\Delta(1)})$ equals 
$(\max_{\gamma\in\Delta(1)}\hot(\gamma))+1$;
\par(3) \ if $\co$  is an $\fX_{\Delta(1)}$-orbit in $\anod$, then the average value 
$\bigl(\sum_{\Gamma\in\co}  \#\Gamma\bigr)/\#\co$ does not depend on $\co$ and equals
$\#\Delta(1)/\ord(\fX_{\Delta(1)}$;
\par(4) \ if $\co$  is an $\fX_{\Delta(1)}$-orbit in $\anod$, then the average value 
$\bigl(\sum_{\Gamma\in\co}  \#I(\Gamma)\bigr)/\#\co$ does not depend on $\co$ and equals
$\#\Delta(1)/2$.
\par(5) \ the polynomial $\ndt$ is palindromic if and only if $\Delta(1)$ has a unique rank level of 
maximal size.
\\ 
Our examples show that properties (1)--(3),(5) fail for some \textsf{wmf} representations that 
are not related to $\BZ$-gradings. We also touch upon some aspects of the ``$t=-1$ 
phenomenon"~\cite{stembr}
related to the posets $\Delta(1)$ and their $\eus M$-polynomials. 

In a subsequent  article, we develop some general theory related to the weight posets 
$\Delta(1)$ and discuss manifestations of the cyclic sieving phenomenon \cite{sagan}
in this setting.

\section{Posets, weight posets and gradings}    
\label{sec:general}

\noindent 
We begin with recalling some notation and standard facts on posets, see \cite[Ch.\,3]{stanley}.
The {\it Hasse diagram\/} of $\eus P$ is the directed graph $\eus H(\eus P)$ whose vertex set is 
$\eus P$ and the set of edges is $\{(x,x')\in \eus P\times \eus P\mid \text{ $x$ covers $x'$ }\}$. (Such
an edge is depicted by $\stackrel{x'}{\bullet}\longrightarrow \stackrel{x}{\bullet}$.)
Then 
$\eus P$ is the {\it disjoint union\/} of subposets $\eus P_1$ and $\eus P_2$ 
(denoted $\eus P=\eus P_1\sqcup\eus P_2$), if
$\eus H(\eus P)$ is the disjoint union of graphs $\eus H(\eus P_1)$ and $\eus H(\eus P_2)$.
A poset is said to be {\it connected\/} if it is not a disjoint union of two nonempty subposets.
The following easy observation reduces many problems that are of interest for us to the case of
connected posets.

\begin{lm}    \label{lem:disjoint-union}
If $\eus P=\eus P_1\sqcup\eus P_2$, then 
$\eus M_{\eus P}(t)=\eus M_{\eus P_1}(t){\cdot}\eus M_{\eus P_2}(t)$,
$\eus N_{\eus P}(t)=\eus N_{\eus P_1}(t){\cdot}\eus N_{\eus P_2}(t)$,
and $\ord(\fX_\eus P)=\mathsf{l.c.m.}\{\ord(\fX_{\eus P_1}), \ord(\fX_{\eus P_2})\}$.
\end{lm}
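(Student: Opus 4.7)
The plan is to exploit the fact that when $\eus P=\eus P_1\sqcup\eus P_2$, no element of $\eus P_1$ is comparable to any element of $\eus P_2$. The proof then reduces to a bookkeeping exercise based on a single bijection.

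First I would establish the key bijection. Given any antichain $\Gamma\in\AN(\eus P)$, set $\Gamma_i=\Gamma\cap\eus P_i$ for $i=1,2$. Since elements of different components are incomparable, each $\Gamma_i$ is automatically an antichain in $\eus P_i$, and conversely any pair $(\Gamma_1,\Gamma_2)\in\AN(\eus P_1)\times\AN(\eus P_2)$ glues to an antichain $\Gamma_1\sqcup\Gamma_2$ of $\eus P$. Hence $\AN(\eus P)\longleftrightarrow\AN(\eus P_1)\times\AN(\eus P_2)$. Moreover, using incomparability again, the upper ideal decomposes as $I(\Gamma)=I(\Gamma_1)\sqcup I(\Gamma_2)$, so $\#I(\Gamma)=\#I(\Gamma_1)+\#I(\Gamma_2)$, while evidently $\#\Gamma=\#\Gamma_1+\#\Gamma_2$.

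Next I would derive the two polynomial identities by summing these additive statistics over the product bijection:
\[
\eus N_{\eus P}(t)=\sum_{\Gamma}t^{\#\Gamma}=\sum_{\Gamma_1,\Gamma_2}t^{\#\Gamma_1}t^{\#\Gamma_2}
=\eus N_{\eus P_1}(t){\cdot}\eus N_{\eus P_2}(t),
\]
and likewise for $\eus M_\eus P(t)$ using the splitting of $\#I(\Gamma)$. These are immediate; no subtlety arises.

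For the order of $\fX_\eus P$, I would check that the reverse operator respects the decomposition. Since $\eus P\setminus I(\Gamma)=(\eus P_1\setminus I(\Gamma_1))\sqcup(\eus P_2\setminus I(\Gamma_2))$ and maximal elements of a disjoint union of posets are computed componentwise, we have
\[
\fX_\eus P(\Gamma)=\fX_{\eus P_1}(\Gamma_1)\sqcup\fX_{\eus P_2}(\Gamma_2).
\]
Iterating, $\fX_\eus P^k$ acts as $(\fX_{\eus P_1}^k,\fX_{\eus P_2}^k)$ under the product bijection, so $\fX_\eus P^k=\mathrm{id}$ iff $\fX_{\eus P_i}^k=\mathrm{id}$ on $\AN(\eus P_i)$ for both $i$. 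The smallest such $k$ is $\mathsf{l.c.m.}\{\ord(\fX_{\eus P_1}),\ord(\fX_{\eus P_2})\}$, which yields the claim.

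There is no genuine obstacle; the only thing to be careful about is verifying that both the combinatorial statistics ($\#\Gamma$, $\#I(\Gamma)$) and the dynamics of $\fX$ split cleanly across the disjoint union. Everything follows from the single observation that elements of distinct components are incomparable, hence cannot enter into the definitions of $I(\Gamma)$ or $\max(\cdot)$ across components.
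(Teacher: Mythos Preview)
Your proof is correct and is exactly the natural argument. The paper in fact gives no proof of this lemma at all, calling it an ``easy observation''; your write-up supplies precisely the routine verification (product bijection on antichains, additivity of $\#\Gamma$ and $\#I(\Gamma)$, and componentwise action of $\fX$) that the author leaves to the reader.
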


A poset $\eus P$ is {\it graded\/} if it admits a rank function. A {\it rank function\/} on $\eus P$ is a map 
$r:\eus P\to \BN$ such that $r(x)=r(y)+1$ whenever $x$ covers $y$.

As is well known, $\anp$ carries a natural poset structure for any $\eus P$. The quickest way to
introduce it is to use the inclusion of the corresponding upper or lower ideals of $\eus P$.
For $\Gamma,\Gamma'\in\anp$, we set $\Gamma \le_{up} \Gamma'$ if $I(\Gamma)\subset I(\Gamma')$. The similar use of lower ideals yields the opposite poset structure in $\anp$. That is, letting $\Gamma\le_{lo} \Gamma'$ if $I_-(\Gamma)\subset I_-(\Gamma')$, we obtain \\[.5ex]
\centerline{$\Gamma\le_{up}  \Gamma'$ \ if and only if \ 
$\Gamma'\le_{lo}  \Gamma$.}
\\[.6ex]
Sometimes it is convenient to have a separate notation for the (po)sets of upper and lower ideals.
Let $(\eus J_+(\eus P), \subseteq)$ (resp. $(\eus J_-(\eus P), \subseteq)$) be the poset of upper (resp. lower) ideals in $\eus P$ with respect to the usual inclusion. The output of the above discussion is that
\begin{gather*}
(\eus J_+(\eus P), \subseteq) \simeq (\anp, \le_{up}), \quad (\eus J_-(\eus P), \subseteq) \simeq (\anp, \le_{lo}), \\  \text{ and } \ 
(\anp, \le_{lo})  \simeq (\anp, \le_{up})^{opp} .
\end{gather*}
The poset $(\anp, \le_{up})$ is graded, with rank function $\Gamma\mapsto \#I(\Gamma)$. Thus, 
$\eus M_{\eus P}(t)$ is the {\it rank-generating function\/} for $(\anp, \le_{up})$.
Note that 
$\Gamma'$ covers $\Gamma$ w.r.t. $\le_{up}$ if and only if $I(\Gamma')=I(\Gamma)\cup\{x\}$ for some $x\in\eus P$; moreover, 
$I(\Gamma)\cup\{x\}$ is an upper ideal for $x\in \eus P\setminus I(\Gamma)$ 
if and only if $x\in\max(\eus P\setminus I(\Gamma))$. 

\subsection{Weight posets and weight multiplicity free representations}  
\label{subs:wmf}
Let $\h$ be a complex reductive algebraic Lie algebra with $\rk[\h,\h]=m$.
Fix a triangular decomposition $\h=\n^-\oplus\te\oplus\n^+$. 
The root system $\Delta_\h =\Delta(\h,\te)$ is of rank $m$; it is reduced but is not 
necessarily irreducible. Then 
$\Delta^+_\h$ is the set of  roots in $\n^+$ and $\Pi_\h$ is the set of simple roots in $\Delta^+_\h$.
Write $\mathcal X_+$ for the set of dominant weights associated with $\Delta^+_\h$ and 
$W_\h$ for the Weyl group of $\Delta_\h$.

For a simple $\h$-module $\sfr(\lb)$ with highest weight $\lb\in\mathcal X_+$, let 
$\eus P(\sfr(\lb))$ (or merely $\eus P(\lb)$) be
the set of $\te$-weights in $\sfr(\lb)$. Whenever we wish to stress that $\eus P(\lb)$ is associated with $\h$-module, we write $\eus P(\h,\lb)$ for it.
The partial order `$\curle$' in $\eus P(\lb)$ is defined by the 
requirement that, for $\gamma,\nu\in \eus P(\lb)$, $\gamma$ covers $\mu$ if and only if 
$\gamma-\mu\in\Pi_\h $. Hence $\mu\curle\gamma$ if and only if $\gamma-\mu$ is a nonnegative 
integer linear combination of simple roots.
Then $\max\eus P(\lb)=\{\lb\}$.
If $\lb^*$ is the highest weight of the dual representation (i.e., $\sfr(\lb)^*=\sfr(\lb^*)$), then 
$-\lb^*$ is the lowest weight of $\sfr(\lb)$ and $\min \eus P(\lb)=\{-\lb^*\}$.
Let $p$ be any linear function on $\te^*$ such that $p(\ap)=1$ for all
$\ap\in \Pi_\h $. Then all maximal chains from $-\lb^*$ to $\lb$ are of length $p(\lb+\lb^*)$.
Hence $\eus P(\lb)$ is a graded poset. For weight posets, it is convenient to assume that the minimal element
of $\eus P(\lb)$, $-\lb^*$,  has rank one.
The corresponding rank function $r$ is said to be {\it tuned}. It is given by
$\mu\in \eus P(\lb)\stackrel{r}{\mapsto} p(\mu+\lb^*)+1$.

If $V$ is any finite-dimensional $\h$-module, then  $\eus P(V^*)=-\eus P(V)$, hence the posets
$\eus P(V^*)$ and $\eus P(V)$ are anti-isomorphic. But we actually have more.
\begin{lm}   \label{lem:M-polinom-P(V)}
{\rm (i)} The posets $\eus P(V^*)$ and $\eus P(V)$ are naturally isomorphic and {\rm (ii)} the polynomial $ \eus M_{\eus P(V)}(t)$ is palindromic, of degree $\#\eus P(V)$. 
\end{lm}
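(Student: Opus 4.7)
The plan is to construct the isomorphism of (i) explicitly using the longest element of $W_\h$, and then to deduce (ii) by combining (i) with the standard complementation trick for upper ideals.

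For (i), let $w_0\in W_\h$ denote the longest element and put $\phi:=-w_0$ acting on weights. Since $\eus P(V)$ is $W_\h$-stable, we have $\phi(\eus P(V))=-\eus P(V)=\eus P(V^*)$, so $\phi$ is a bijection between the underlying sets. The key classical input is that $-w_0$ permutes the simple roots $\Pi_\h$: indeed $-w_0$ sends $\Delta^+_\h$ to $\Delta^+_\h$ and therefore permutes its indecomposable elements. Now if $\mu\curle\nu$ in $\eus P(V)$, say $\nu-\mu=\sum c_i\alpha_i$ with $c_i\in\BN$ and $\alpha_i\in\Pi_\h$, then $\phi(\nu)-\phi(\mu)=-w_0(\nu-\mu)=\sum c_i(-w_0\alpha_i)$ is again a nonnegative integer combination of simple roots, so $\phi(\mu)\curle\phi(\nu)$ in $\eus P(V^*)$. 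Since $\phi$ is an involution, applying the same argument to $\phi^{-1}=\phi$ yields the converse, so $\phi$ is a poset isomorphism.

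For (ii), observe that the tautological bijection $\psi:\eus P(V)\to\eus P(V^*)$, $\mu\mapsto-\mu$, is visibly an anti-isomorphism of posets from the definition of the covering relation. Composing with $\phi^{-1}$ from part (i), we obtain an anti-automorphism $\sigma:=\phi^{-1}\circ\psi$ of $\eus P(V)$ (explicitly $\sigma(\mu)=w_0\mu$, though we shall not need this). Any such $\sigma$ sends upper ideals to lower ideals preserving cardinality, while the complementation $L\mapsto\eus P(V)\setminus L$ sends lower ideals of size $j$ to upper ideals of size $\#\eus P(V)-j$. Hence $U\mapsto\eus P(V)\setminus\sigma(U)$ is a bijection from upper ideals of cardinality $i$ to upper ideals of cardinality $\#\eus P(V)-i$, which shows that $\eus M_{\eus P(V)}(t)$ is palindromic. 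The degree statement is then immediate, since no upper ideal can exceed size $\#\eus P(V)$ and the ideal $I(\min\eus P(V))=\eus P(V)$ attains it.

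The only substantive ingredient is the standard fact that $-w_0$ permutes $\Pi_\h$; the remainder of the argument is formal bookkeeping with ideals and complements, and I do not anticipate any real obstacle.
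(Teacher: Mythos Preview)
Your proof is correct and follows essentially the same route as the paper: both use $-w_0$ (which permutes $\Pi_\h$) to give the isomorphism in (i), and for (ii) both exploit that $w_0$ is an order-reversing bijection of $\eus P(V)$, so that $I\mapsto \eus P(V)\setminus w_0(I)$ sends upper ideals to upper ideals of complementary cardinality. Your presentation is a touch more explicit in factoring the anti-automorphism as $\phi^{-1}\circ\psi$, but the substance is identical.
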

\begin{proof}
(i) Let $w_0\in W_\h$ be the longest element. Then $-w_0(\Pi_\h)=\Pi_\h$ and $-w_0(\eus P(V))=
\eus P(V^*)$.  Therefore, $\nu\curle \mu$ in $\eus P(V)$ if and only if $-w_0(\nu)\curle -w_0(\mu)$ in
$\eus P(V^*)$. 
\par
(ii)  Suppose that $I\in\eus J_+(\eus P(V))$.  
Since $w_0(\Delta_\h^+)=-\Delta_\h^+$, we have
$w_0(I)\in\eus J_-(\eus P(V))$. Therefore
$I^*:=\eus P(V)\setminus w_0(I)$ is  an upper ideal of complementary cardinality. 
\end{proof}

\begin{rmk}   \label{rem:two-inv}
The reverse operator $\fX_\eus P$ is an element of the symmetric group on $\anp$. Therefore, it 
can be presented as a product of two involutions. An interesting (perhaps, useful) feature of the 
weight posets is that $\fX_{\eus P(V)}$ can be written as such a product in  a very explicit way.
In Lemma~\ref{lem:M-polinom-P(V)}, we considered, for any $I\in \eus J_+(\eus P(V))$, the upper ideal
$I^*=\eus P(V)\setminus w_0(I)$. Then $I^{**}=I$ and
this provides our first ingredient, the involution `$\ast$':
\[
    \Gamma=\min(I(\Gamma)) \mapsto \Gamma^*=\min(I(\Gamma)^*) .
\]
Using the fact that $w_0(\min(I))=\max(w_0(I))$ for any $I\in \eus J_+(\eus P(V))$, we obtain 
\[
    \fX_{\eus P(V)}(\Gamma)=\max(\eus P(V)\setminus I(\Gamma))=
    w_0\bigl( \min(\eus P(V)\setminus w_0(I(\Gamma)))\bigr)=w_0(\Gamma^*) .
\]
Thus, $\fX_{\eus P(V)}$ is the product of involutions '$\ast$' and $w_0$.
\end{rmk}

We say that $V$ is  {\it weight multiplicity free\/} (\textsf{wmf} for short) if every  $\te$-weight space 
of $V$ is one-dimensional. If $V=\bigoplus_i \sfr(\lb_i)$ is \textsf{wmf}, then so is each $\sfr(\lb_i)$ (but 
not vice versa!). Then $\#\eus P(V)=\dim V$, $\eus P(V)$ is the disjoint union of the posets 
$\eus P(\lb_i)$, and there is a one-to-one correspondence between $\eus J_+(\eus P(V))$ and
the $(\te\oplus\n^+)$-stable subspaces of $V$.
\par
The list of the irreducible \textsf{wmf} representations of simple Lie algebras is obtained by 
R.\,Howe~\cite[4.6]{howe}, see also \cite[Table\,1]{mmj}. Taking tensor products, one derives from it 
the corresponding list for the semisimple Lie algebras. The following is obvious.

\begin{lm}  \label{lem:tensor-prod-dir-sum}
Let  $(\h_i,\sfr(\lb_i))$, $i=1,\dots,l$, be irreducible representations of reductive Lie algebras. Then
\par 
{\sf \bfseries (1)} the representation $\sfr=\sfr(\lb_1)\otimes\dots\otimes\sfr(\lb_l)$ of\/ 
$\h=\h_1\times\ldots\times\h_l$ is irreducible, with highest weight $\lb_1+\ldots+\lb_l$, and
\[  
   \eus P\bigl(\sfr(\lb_1)\otimes\dots\otimes\sfr(\lb_l)\bigr)=\eus P(\lb_1+\ldots+\lb_l)=\eus P(\lb_1)\times\ldots
   \times\eus P(\lb_l) ,
\]
the direct product of the weight posets $\eus P(\lb_i)$.
\par 
{\sf \bfseries (2)} If each $(\h_i,\sfr(\lb_i))$ is \textsf{wmf}, then so is $(\h,\sfr)$.
\end{lm}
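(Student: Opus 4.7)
The plan is to reduce to the case $l=2$ by an obvious induction on $l$, and then verify both claims using the standard product structure of reductive Lie algebras. The key structural observation is that the Cartan subalgebra of $\h=\h_1\times\h_2$ is $\te=\te_1\oplus\te_2$, so $\te^*=\te_1^*\oplus\te_2^*$, and the simple roots $\Pi_\h$ are precisely the disjoint union $\Pi_{\h_1}\sqcup\Pi_{\h_2}$ sitting inside this direct sum decomposition.

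Since $\h_i$ acts only on the $i$-th tensor factor, every $\te$-weight $\mu$ of $\sfr(\lb_1)\otimes\sfr(\lb_2)$ decomposes uniquely as $\mu=\mu_1+\mu_2$ with $\mu_i\in\te_i^*$, and the weight space factors as
\[
   (\sfr(\lb_1)\otimes\sfr(\lb_2))^\mu = \sfr(\lb_1)^{\mu_1}\otimes \sfr(\lb_2)^{\mu_2}.
\]
This already gives $\eus P(\sfr(\lb_1)\otimes\sfr(\lb_2))=\eus P(\lb_1)\times\eus P(\lb_2)$ at the level of sets. A covering relation here means subtraction of a single simple root of $\h$, which lies entirely in one $\Pi_{\h_i}$; hence coverings correspond to coverings in one factor while the other is held fixed, matching the Hasse diagram of the product poset. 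The unique maximum $\lb_1+\lb_2$ is then read off as the highest weight. Part {\sf \bfseries (2)} is immediate from the displayed factorization: if $\dim \sfr(\lb_i)^{\mu_i}\le 1$ for each $i$, the tensor product inherits the same bound.

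For irreducibility, I would take a nonzero $\h$-submodule $W\subseteq \sfr(\lb_1)\otimes\sfr(\lb_2)$ and first regard it as an $\h_1$-submodule of $\sfr(\lb_1)\otimes\sfr(\lb_2)\simeq \sfr(\lb_1)^{\oplus\dim\sfr(\lb_2)}$. Since $\sfr(\lb_1)$ is $\h_1$-simple, every $\h_1$-submodule has the form $\sfr(\lb_1)\otimes U$ for a uniquely determined subspace $U\subseteq \sfr(\lb_2)$. The residual $\h_2$-action preserves $U$, and $\h_2$-simplicity of $\sfr(\lb_2)$ forces $U=\sfr(\lb_2)$, so $W$ is the whole space.

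The only real ingredient is the clean splitting $\te^*=\te_1^*\oplus\te_2^*$, which makes weight spaces in a tensor product of modules over a product Lie algebra behave multiplicatively; there is no substantive obstacle, which is why the author classifies the statement as \emph{obvious}.
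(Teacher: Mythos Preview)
Your proposal is correct and complete. The paper itself gives no proof at all: the lemma is introduced with the phrase ``The following is obvious'' and left without argument. Your write-up is exactly the standard unpacking one would give if pressed, and the closing remark in your proposal already acknowledges this.
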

\begin{ex}   \label{ex:sl_n-simplest}
The root system of  $\h=\sln$  is of type $\GR{A}{n-1}$, and we use the common notation for roots, etc.,
see Tables in ~\cite{bour, t41}. 
That is, $\ap_i=\esi_i-\esi_{i+1}$ are the simple roots and $\varpi_i=\esi_1+\ldots+\esi_i$
are the fundamental weights, $i=1,\dots,n-1$. The standard $n$-dimensional representation 
of $\sln$ is \textsf{wmf} and its weights are $\{\esi_i,\ i=1,\dots,n\}$. Here $\esi_1=\varpi_1$ is the 
highest weight and  $\eus P(\varpi_1)$ is the $n$-element  chain $\eus C_{n}$:
\begin{center}   $\eus H(\eus P(\varpi_1))$: \qquad
\begin{picture}(110,22)(0,3)
\multiput(10,8)(20,0){2}{\color{MIXT}\circle*{4}}
\multiput(70,8)(20,0){2}{\color{MIXT}\circle*{4}}

\multiput(11,8)(60,0){2}{\vector(1,0){18}}

\put(45,5){$\cdots$}
\put(-5,5){{\small $\esi_n$}}
\put(95,5){{\small $\esi_1$}}
\end{picture}
\end{center}
\vskip.5ex\noindent
If $\h=\sln\times\mathfrak{sl}_m$ and $V=\sfr(\varpi_1)\otimes\sfr(\varpi'_1)$ is the tensor product of 
the standard representations, then 
$\eus P(V)=\eus P(\varpi_1+\varpi'_1)\simeq \eus C_n\times\eus C_m$. The Hasse diagram 
$\eus H(\eus C_n\times\eus C_m)$ is the rectangle of size $m\times n$. Clearly, all posets
$\eus C_{n_1}\times\dots\times \eus C_{n_l}$ are associated with \textsf{wmf} representations, but one shouldn't expect much from them, if $l\ge 4$, see Section~\ref{sect:conj}.
\end{ex}

For a graded  poset $\eus P$, let  $\eus P_i$ denote the set of elements of rank $i$.  The sets 
$\eus P_i$ are said to be the {\it (rank) levels\/} of $\eus P$.
Suppose that $\eus P=\bigsqcup_{i=1}^d \eus P_i$. 
Then $\eus P$ is {\it rank symmetric\/} if $\#\eus P_i=\#\eus P_{d+1-i}$ for all $i$; 
it is {\it rank unimodal\/} if 
$\#\eus P_1\le \#\eus P_2\le \cdots \le \#\eus P_k\ge \#\eus P_{k+1}\ge\cdots \ge \#\eus P_d$ 
for some $1\le k\le d$.
The poset $\eus P$ is said to be {\it Sperner}, if the largest size of an antichain is
equal to $\max\{\#\eus P_i, \ 1\le i\le d\}$. 

\begin{lm}    \label{lem:sperner-sl2}
For any simple \textsf{wmf}\/ $\h$-module $\sfr(\lb)$, the graded poset $\eus P(\lb)$ is rank symmetric, 
rank unimodal,  and Sperner. 
\end{lm}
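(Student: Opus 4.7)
The plan is to derive all three properties from a principal $\mathfrak{sl}_2$-triple in $[\h,\h]$, exploiting the \textsf{wmf} hypothesis at the critical step. For rank symmetry alone, however, no extra structure is needed: by Lemma~\ref{lem:M-polinom-P(V)}(i), the isomorphism $\eus P(\lb)\simeq\eus P(\lb^*)$ induced by $-w_0$ preserves tuned ranks (since $-w_0$ permutes $\Pi_\h$), while $\eus P(\lb^*)=-\eus P(\lb)$ is tautologically the opposite poset of $\eus P(\lb)$. Composing the two isomorphisms sends rank $i$ to rank $d+1-i$ (where $d$ is the top rank), forcing $\#\eus P(\lb)_i=\#\eus P(\lb)_{d+1-i}$.

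For rank unimodality I would fix a principal $\mathfrak{sl}_2$-triple $(e,h,f)$ in $[\h,\h]$ with $h=2\rho^\vee$ and $e=\sum_{\ap\in\Pi_\h}c_\ap e_\ap$ (the nonzero scalars $c_\ap$ coming from the Jacobson--Morozov construction). The element $h$ acts on $\sfr(\lb)_\mu$ by the scalar $\langle\mu,2\rho^\vee\rangle$, which is an affine function of the tuned rank $r(\mu)$; hence the $h$-eigenspace decomposition of $\sfr(\lb)$ agrees with the decomposition by rank. By \textsf{wmf}, the $h$-eigenspace corresponding to rank $i$ has dimension $\#\eus P(\lb)_i$, so rank unimodality of $\eus P(\lb)$ is nothing but the standard unimodality of $h$-weight multiplicities in a finite-dimensional $\mathfrak{sl}_2$-module.

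The most delicate step is the Sperner property. Here the key observation is that \textsf{wmf} turns $e$ into an \emph{order-raising operator}: on the one-dimensional weight space $\sfr(\lb)_\mu$, the element $e$ sends a generator to a linear combination $\sum_\ap c_\ap\, e_\ap\cdot v_\mu\in\bigoplus_\ap \sfr(\lb)_{\mu+\ap}$, where the sum runs over $\ap\in\Pi_\h$ with $\mu+\ap\in\eus P(\lb)$, i.e.\ over precisely the covers of $\mu$ in $\eus P(\lb)$. Combined with $[e,f]=h$, $[h,e]=2e$ and the standard representation theory of $\mathfrak{sl}_2$, this yields injectivity of $e^k\colon \sfr(\lb)_{(i)}\to\sfr(\lb)_{(i+k)}$ whenever $i+k\le d+1-i$. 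By the Stanley--Proctor theorem on graded posets admitting an order-raising operator embedded in an $\mathfrak{sl}_2$-triple, $\eus P(\lb)$ is then Peck, and in particular strongly Sperner. The main obstacle is conceptual rather than computational: one must realize that \textsf{wmf} is precisely the hypothesis identifying the nonzero matrix entries of $e$ with the Hasse-diagram covers of $\eus P(\lb)$, after which the Sperner property becomes a standard consequence of the $\mathfrak{sl}_2$-action.
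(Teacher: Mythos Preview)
Your proof is correct and follows essentially the same approach as the paper: both use a principal $\tri$-triple in $[\h,\h]$ to endow $\eus P(\lb)$ with an $\tri$-representation in the sense of Proctor~\cite{pr82}, the \textsf{wmf} hypothesis being exactly what makes the raising operator order-raising on the weight basis, after which rank symmetry, rank unimodality, and the (strong) Sperner property all follow from Proctor's theorem. The paper simply cites~\cite{pr82} directly, whereas you unpack the mechanism; your separate $-w_0$ argument for rank symmetry is correct but redundant once the $\tri$-machinery is in place.
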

\begin{proof}
Consider a {\it principal $\tri$-subalgebra\/} $\ah\subset [\h,\h]$ associated with our fixed triangular decomposition of $\h$. That is, $\ah\simeq\tri$ has a basis $(X,H,Y)$ such that $[X,Y]=H$, $[H,X]=2X$,
$[H,Y]=-2Y$, $H\in\te$ is a regular element, 
and $X$ (resp. $Y$) is a sum of root vectors corresponding to $\Pi_\h $
(resp.  $-\Pi_\h $). Then the representation of $\ah$ on $\sfr(\lb)$ translates into a representation of 
$\ah$ on the graded poset $\eus P(\lb)$, as defined in \cite{pr82}. Therefore, the main result of \cite{pr82} yields all the assertions.
\end{proof}

\noindent It follows from \cite{pr82} that the poset $\eus P(\lb)$ is also {\it strongly Sperner}, but we do need this here.

\begin{rmk}   \label{rem:dynkin}
The idea to use $\tri$-subalgebras for obtaining  properties of irreducible representations
goes back to E.B.\,Dynkin. In 1950, he invented principal $\tri$-subalgebras of semisimple Lie algebras
and proved, using them, that a certain partition of $\sfr(\lb)$ into layers, 
$\sfr(\lb)=\bigoplus_{j=1}^N \sfr(\lb)_j$, yields a symmetric unimodal sequence
$\{d_j=\dim \sfr(\lb)_j\}_{j=1}^N$, see \cite[Theorem\,4]{dy50}. In the special case of {\textsf wmf} 
representations, one has $d_j=\#\eus P(\lb)_j$ and Dynkin's result translates into the assertion that  
$\eus P(\lb)$ is rank symmetric and rank unimodal.
\end{rmk}

\subsection{Gradings of simple Lie algebras} 
\label{subs:gradings}
Interesting \textsf{wmf}~representations and weight posets are associated with gradings of simple Lie 
algebras. We refer to \cite[Ch.\,3,\,\S\,3]{t41} for generalities on gradings. In what follows, $\g$ is a 
complex simple Lie algebra with a fixed triangular decomposition $\g=\ut^-\oplus\te\oplus\ut^+$. 
The associated root system is $\Delta=\Delta(\g,\te)\subset\te^*$, and we use all the relevant 
notation on $\Delta$ presented in 
Introduction. Additionally, $\te^*_{\BR}$ is the $\BR$-span of $\Delta$ in $\te^*$ and $(\ ,\ )$ is a $W$-invariant scalar product in $\te^*_{\BR}$. For $\gamma\in\Delta$, we set 
$\gamma^\vee=2\gamma/(\gamma,\gamma)$. Then
$\Delta^\vee=\{\gamma^\vee \mid \gamma\in\Delta\}$ is the {\it dual root system}, 
$(\Delta^\vee)^+=(\Delta^+)^\vee$ is a set of positive roots in $\Delta^\vee$, and 
$\Pi^\vee=\{\ap^\vee\mid \ap\in \Pi\}$ is the set of simple roots in $(\Delta^\vee)^+$. However, 
$\theta^\vee$ appears to be the highest root in $(\Delta^\vee)^+$ is and only if all roots of
$\Delta$ have the same length (i.e., $\Delta\in\{\mathbf{ADE}\}$). Write $s_\gamma$ for the 
{\it reflection\/} in $W$ with respect to $\gamma\in \Delta$. Note that $s_\gamma=s_{\gamma^\vee}$.

\textbullet \quad Let $\g=\bigoplus_{i\in\BZ}\g(i)$ be a $\BZ$-grading. Since any derivation of $\g$ 
is inner,  $\g(i)=\{x\in\g \mid [\tilde h,x]=ix\}$ for some $\tilde h\in \g(0)$. The element 
$\tilde h$ is said to be {\it defining\/} for the grading in question. Since $\tilde h$ is semisimple,
$\g(0)$ is reductive, $\rk\g=\rk\g(0)$, and $\g(i)$ is a \textsf{wmf} $\g(0)$-module for $i\ne 0$. Because 
$\tilde h$ lies in the centre of $\g(0)$, it is convenient to introduce a reductive subalgebra
$\widetilde{\g(0)}$ such that $\g(0)=\widetilde{\g(0)}\oplus\langle \tilde h\rangle$. 
\par
Without loss of generality, one may assume that $\tilde h\in\te$ and $\ap(\tilde h)\ge 0$ for all $\ap\in\Pi$. Then $\te\subset\g(0)$ and the grading is fully determined by the partition $\Pi=\bigsqcup_{i\ge 0} \Pi(i)$, where  $\Pi(i)=\{\ap\in\Pi\mid \ap(\tilde h)=i\}$.
If $\Delta(i)=\{\gamma\in \Delta\mid \gamma(\tilde h)=i\}$, then $\Delta(i)$ is the set of roots of $\g(i)$,  
and $\Delta=\bigsqcup_{i\in\BZ} \Delta(i)$. We repeatedly use the property that the last partition is a
``grading'' of $\Delta$, i.e., if $\gamma\in\Delta(i)$, $\gamma'\in\Delta(j)$, and $\gamma+\gamma'$ is a root, then $\gamma+\gamma'\in\Delta(i+j)$.
\par
In this setting, we automatically obtain a distinguished triangular decomposition:
\[
     \g(0)=(\g(0)\cap\ut^-)\oplus\te \oplus (\g(0)\cap\ut^+) .
\]
Then $\Delta(0)^+:=\Delta^+\cap\Delta(0)$ is the set of roots of $\g(0)\cap\ut^+$, 
$\Pi(0)$ is the set of simple roots in $\Delta(0)^+$, and 
\[
    \Delta^+=\Delta(0)^+\sqcup \Delta(1)\sqcup\Delta(2)\sqcup\dots 
\]
We may (and will) use the notation and results of Section~\ref{subs:wmf} with $\h=\g(0)$, 
$\n^+=\g(0)\cap\ut^+$,
$\Pi_\h =\Pi(0)$, etc. For instance, $\Delta(i)=\eus P(\g(i))$.
The Weyl group of $\Delta(0)$, $W(0)$, is a parabolic subgroup of $W$. Let $W^0$ denote the 
set of representatives of minimal length for the cosets $W/W(0)$. In other words \cite[1.10]{hump},
\beq
         W^0=\{w\in W \mid w(\ap)\in \Delta^+ \quad \forall \ap\in\Delta(0)^+\} .
\eeq
\begin{rmk}   \label{rem:g1-dostat}
Since $\g(i)$ and $\g(-i)$ are dual $\g(0)$-modules,  the posets $\Delta(i)$ and $\Delta(-i)$ are
isomorphic. Furthermore, if one is interested in possible simple $\g(0)$-modules occurring in $\g(i)$ 
with $i> 0$, then it is enough to consider only the simple $\g(0)$-modules in $\g(1)$,
see~\cite[\S\,1.2, \S\,2.1]{vi76}. 
(For $i >1$,  the problem is reduced to considering the induced grading of
a certain simple subalgebra of $\g$.)
For this reason, it suffices to consider defining elements $\tilde h\in\te$ such that $\ap(\tilde h)\in\{0,1\}$, i.e., 
$\Pi=\Pi(0)\sqcup\Pi(1)$.
The corresponding $\BZ$-gradings are said to be {\it standard}.
Moreover, if $\#\Pi(1)=k$, then we call it a $k$-{\it standard\/} grading.
A standard $\BZ$-grading can be represented by the Dynkin diagram of $\g$,
where the vertices in $\Pi(1)$ are marked. If $\Pi(1)=\{\ap_{i_1},\dots,\ap_{i_k}\}$, then the 
$\ap_{i_j}$'s are precisely the lowest weights of the simple $\g(0)$-modules in $\g(1)$, the centre of 
$\g(0)$ is $k$-dimensional, and $\g(1)$ is a direct sum of $k$ simple $\g(0)$-modules. Thus, for the 
standard $\BZ$-gradings, one obtains
\[
 \text{\it $\widetilde{\g(0)}$ is semisimple $\Leftrightarrow$ the grading is $1$-standard $\Leftrightarrow$ $\g(1)$ is a simple $\widetilde{\g(0)}$-module}.
\]
\end{rmk}
\begin{rmk} 
The weight posets $\Delta(i)$, $i>0$, can be visualised as follows. 
If $\gamma'-\gamma=\ap\in\Pi$, then the 
edge connecting $\gamma$ and $\gamma'$ in the Hasse diagram $\eus H(\Delta^+)$ is said to be {\it of type\/} $\ap$. 
Given a standard $\BZ$-grading of $\g$, let us remove from $\eus H(\Delta^+)$ all the edges of types 
in $\Pi(1)$. This yields a disconnected graph. Each connected component of it is the Hasse 
diagram of either the set of positive roots of a simple factor of $\g(0)$ (if it contains roots from $\Pi(0)$) 
or the weight poset of a simple $\g(0)$-module in some $\g(i)$, $i>0$. The set of weights of a simple $\g(0)$-module in some $\g(i)$, $i\ge 1$, is precisely the set of roots $\gamma$ with fixed values
$[\gamma : \ap]$ for all $\ap\in\Pi(1)$, see e.g.~\cite[3.5]{t41}. Therefore, 
\par 
{\bf --} \ each weight poset $\Delta(i)$, $i\ge 1$, is a subposet of 
$\Delta^+$. 
\par 
{\bf --} \ the connected component containing a node $\ap\in\Pi(1)$ is the Hasse diagram of the weight
poset of a simple $\g(0)$-module in $\g(1)$, 
\par 
{\bf --} \  the tuned rank function for the whole poset 
$\Delta(1)$ is  the restriction to $\Delta(1)$ of the usual height function $\hot: \Delta^+\to \BN$.
\end{rmk}
\textbullet \quad 
Let $\g=\bigoplus_{i\in\BZ_m}\g_i$ be a periodic (or $\BZ_m$-) grading. Such gradings are in a 
one-to-one correspondence with the automorphisms $\sigma\in \Aut(\g)$ of order $m$. Here again 
$\g_0$ is reductive and $\g_i$ is a $\g_0$-module. If $\sigma$ is inner, then $\rk\g_0=\rk\g$ 
and $\g_i$ is a \textsf{wmf} $\g_0$-module for $i\ne 0$. For inner automorphisms $\sigma$, one 
may further assume that $\te\subset\g_0$ and obtain the partition $\Delta=\bigsqcup_{i=0}^{m-1}\Delta_i$ with $\Delta_i=\eus P(\g_i)$. A classification of periodic automorphisms of $\g$ is obtained by V.G.\,Kac (1969). In particular, an explicit description of inner periodic automorphisms
of $\g$ and the respective $\g_0$-modules $\g_1$ can be given in terms of the {\sl extended Dynkin diagram\/} of $\g$, see \cite[\S\,8]{vi76} or \cite[Ch.\,3,\,\S 3.7]{t41} for a thorough self-contained description.

Likewise, for periodic gradings, it suffices to consider $\g_0$-modules $\g_1$. We mention the following useful property of periodic gradings:
{\it $\g_1$ is a simple $\g_0$-module if and only if $\g_0$ is semisimple}, see  \cite[Prop.18]{vi76}.
Note, however, that all $\Delta_i$ are {\bf not} usually subsets of $\Delta^+$! Therefore, 
the rank function for $\Delta_1$ has no relation to  $\hot(\ )$. 

Our main interest lies in the study of representations (posets) related to $\BZ$-gradings. In 
Section~\ref{sect:conj}, we use
\textsf{wmf} representations associated with periodic gradings to demonstrate that properties of the 
antichains and reverse operators become worse for them.

\begin{rmk}   \label{rmk:isom-poset}
In this article, we are primarily interested in combinatorial properties of weight posets. It is therefore 
helpful to keep in mind that the weight posets of two \textsf{wmf} representations of {\bf different} 
Lie algebras can be isomorphic. A list of such isomorphisms appears in \cite[Theorem\,2.1]{mmj}.
\end{rmk}
\subsection{Special classes of $\BZ$-gradings}  
\label{subs:classes}

Our goal is to demonstrate that the weight posets of the form $\Delta(1)$ for $\BZ$-gradings of $\g$ 
exhibit the best possible properties related to antichains, their $t$-analogues, and reverse operators. 
We will consider in details the following two cases:

\vskip.7ex 
{\it\bfseries The abelian gradings}: $\g=\g(-1)\bigoplus\g(0)\bigoplus\g(1)$. 
\\[.5ex] Here $\g(0)\bigoplus\g(1)$ 
is a parabolic subalgebra and $\g(1)$ is its (abelian!) nilradical.  In this case  $\g(1)$ is a simple 
$\g(0)$-module and therefore $\widetilde{\g(0)}$ is semisimple and $\Pi(1)=\{\ap_i\}$. The admissible simple roots $\ap_i$ are characterised by 
the property that $[\theta:\ap_i]=1$. Therefore, such gradings exist
if and only if $\g\ne \GR{G}{2}, \GR{F}{4}, \GR{E}{8}$. The following table provides the  admissible
simple roots (with numbering as in \cite[Table\,1]{t41}): 
\vskip1ex
\begin{center}
\begin{tabular}{|cc||cc|}
\hline
$\GR{A}{n}$ & $\ap_1,\dots,\ap_n$ & $\GR{D}{n}, \ n\ge 4$ & $\ap_1,\ap_{n-1},\ap_n$ \\ \hline
$\GR{B}{n}, \ n\ge 2$ & $\ap_1$ & $\GR{E}{6}$ & $\ap_1,\ap_5$ \\ \hline
$\GR{C}{n}, \ n\ge 2$ & $\ap_n$ & $\GR{E}{7}$ & $\ap_1$  \\
\hline
\end{tabular}
\end{center}
In particular, all $1$-standard $\BZ$-gradings for $\GR{A}{n}$ are abelian!
If an abelian grading corresponds to $\ap_i\in\Pi$, then upon the identification of $\te_\BR$ and 
$\te^*_\BR$, the defining element $\tilde h$ appears to be the minuscule fundamental 
weight $\vp_i^\vee$ of the dual root system $\Delta^\vee$. Indeed, the weight $\vp_i^\vee$ satisfies the relation
\[
   (\vp_i^\vee,(\gamma^\vee)^\vee)=(\vp_i^\vee,\gamma)=\gamma(\tilde h)\in \{-1,0,1\} \quad 
   \text{for all } \ 
   \gamma^\vee\in \Delta^\vee .
\]
Hence it is minuscule \cite[Ch.\,VIII,\ \S\,7, n$^0$3]{bour7-8}. Furthermore, 
$(\vp_i^\vee,\ap_i)=1$ and $(\vp_i^\vee,\ap)=0$ for $\ap\in\Pi\setminus \{\ap_i\}$. Hence $\vp_i^\vee$
is fundamental. Note also that
$W(0)$ is the stabiliser of $\vp_i^\vee$ in $W$.
\vskip1.2ex \indent
{\it\bfseries The extra-special gradings}: 
$\g=\g(-2)\bigoplus\g(-1)\bigoplus\g(0)\bigoplus\g(1)\bigoplus\g(2)$ \& $\dim\g(2)=1$. 
\\    
Any simple Lie algebra has a unique, up to conjugation, $\BZ$-grading of this form, and w.l.o.g. we may
assume that $\Delta(2)=\{\theta\}$. Upon the identification of $\te_\BR$ and $\te^*_\BR$, the defining
element $\tilde h$ is recognised as the coroot $\theta^\vee$.
That is, $\Delta(i)=\{\gamma\in\Delta\mid (\gamma,\theta^\vee)=i\}$ and $W(0)$ is the stabiliser of 
$\theta$ (or $\theta^\vee$) in $W$. For $\GR{A}{1}$ only, we have $\Delta(1)=\varnothing$, i.e., 
the grading appears to be not standard. (This case better fits in the setting of abelian 
gradings.) For all other simple Lie algebras, $\theta\not\in \Pi$ and the extra-special grading is standard.
Then $\Pi(1)=\{\ap\in\Pi\mid (\gamma,\theta^\vee)\ne 0\}$. Therefore $\widetilde{\g(0)}$ is semisimple 
and $\g(1)$ is a simple $\g(0)$-module if and only if $\theta$ is a multiple of a fundamental weight, i.e.,
$\g$ is not of type $\GR{A}{n}$.
\par
Note also that $\g(1)$ is a symplectic $\widetilde{\g(0)}$-module  (hence $\dim\g(1)$ is even) 
and $\g(1)\bigoplus\g(2)$ is a Heisenberg Lie algebra.

\section{Antichains and upper ideals in the abelian case}
\label{sect:ab-case}

\noindent
In this section, only abelian $\BZ$-gradings of $\g$ are considered. We give a description of the antichains and upper ideals in the poset $\Delta(1)$ and compute the corresponding $\eus M$- and
$\eus N$-polynomials.

For $w\in W$, let $N(w)\subset \Delta^+$ be the inversion set and $\ell(w)$ the length of $w$. 
Recall that $\ell(w)=\#N(w)$. It is readily seen that both $N(w)$ and 
$\Delta^+\setminus N(w)$ are {\it closed subsets\/} of $\Delta^+$. (That is, if
$\gamma_1,\gamma_2\in N(w)$ and $\gamma_1+\gamma_2$ is a root, then 
$\gamma_1+\gamma_2\in N(w)$; and likewise for $\Delta^+\setminus N(w)$.)
Conversely, if $M\subset\Delta^+$ has the property that both $M$ and $\Delta^+\setminus M$ are 
closed, then $M=N(w)$ for a unique $w\in W$, see e.g. \cite[p.\,663]{papi94}.
 
\begin{thm}    \label{thm:ab-case}
In the abelian case, there is a natural bijection $W^0 \stackrel{1:1}{\longleftrightarrow} \AN(\Delta(1))$ that takes
$w\in W^0$ to $\Gamma_w:=\min(\Delta(1)\setminus N(w))$. Furthermore,
\begin{itemize}
\item[\sf (i)] \ $I_w=\Delta(1)\setminus N(w)$ is an upper ideal in $\Delta(1)$;
\item[\sf (ii)] \  if $\gamma\in I_w$, then $\gamma\in\min(I_w)$ if and only if $w(\gamma)\in\Pi$; in other words,
$\Gamma_w=\min(I_w)=w^{-1}(\Pi)\cap \Delta(1)$;
\item[\sf (iii)] \  if $\gamma\in N(w)$, then $\gamma\in\max(N(w))$ if and only if $w(\gamma)\in-\Pi$; in other words,\\
$\max(N(w))=\max(\Delta(1)\setminus I_w)=-w^{-1}(\Pi)\cap \Delta(1)$.
\end{itemize}
In particular, $\#\AN(\Delta(1))=\#(W/W(0))$ \ and \ $\# I_w=\dim\g(1)-\ell(w)$.
\end{thm}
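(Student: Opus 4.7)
The plan rests on two standard facts combined with the abelian constraint. First, $M \subseteq \Delta^+$ equals $N(w)$ for a unique $w \in W$ iff both $M$ and $\Delta^+ \setminus M$ are closed under root-addition within $\Delta^+$ (the result cited above from \cite{papi94}). Second, the characterisation of $W^0$ given in the preamble says that $w \in W^0$ iff $N(w) \cap \Delta(0)^+ = \varnothing$, which forces $N(w) \subseteq \Delta(1)$ for $w \in W^0$. The abelian constraint amounts to $\Pi(1) = \{\ap_{i_0}\}$ together with every root having $\ap_{i_0}$-coefficient in $\{-1,0,1\}$; in particular, no two elements of $\Delta(1)$ sum to a root.

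My first task is to establish the bijection $w \mapsto N(w)$ between $W^0$ and the set of lower ideals of $\Delta(1)$; combined with complementation in $\Delta(1)$ and the standard antichain-ideal correspondence, this yields the announced map $w \mapsto \Gamma_w$ and proves (i). For $w \in W^0$ I check that $N(w)$ is a lower ideal of $\Delta(1)$: given $\gamma \in N(w) \cap \Delta(1)$ and $\beta \in \Pi$ with $\gamma - \beta \in \Delta(1)$, the $\ap_{i_0}$-coefficient forces $\beta \in \Pi(0)$, so $\beta \notin N(w)$; closedness of $\Delta^+ \setminus N(w)$ then forces $\gamma - \beta \in N(w)$ (else $\gamma = (\gamma-\beta)+\beta \notin N(w)$, a contradiction). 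For surjectivity, given a lower ideal $J \subseteq \Delta(1)$, both $J$ and $\Delta^+ \setminus J$ are closed in $\Delta^+$: $J$ by the abelian constraint, and $\Delta^+ \setminus J$ by checking only sums $\beta_1+\beta_2$ with $\beta_1 \in \Delta(1), \beta_2 \in \Delta(0)^+$, in which case $\beta_1 \curle \beta_1+\beta_2$ in $\Delta(1)$ and the lower-ideal property prevent $\beta_1+\beta_2 \in J$ once $\beta_1 \notin J$. Hence $J = N(w)$ for a unique $w$, necessarily in $W^0$.

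For (ii), the easy direction uses the elementary fact that $\alpha - \mu \notin \Delta^+$ whenever $\alpha \in \Pi$ and $\mu \in \Delta^+$: if $w(\gamma) \in \Pi$, then for $\beta \in \Pi(0)$ with $\gamma-\beta \in \Delta(1)$ the nonzero root $w(\gamma-\beta) = w(\gamma) - w(\beta)$ cannot lie in $\Delta^+$, so $\gamma - \beta \in N(w)$ and $\gamma$ is minimal in $I_w$. For the converse, assume $\gamma \in \min(I_w)$ while $w(\gamma) \notin \Pi$; choose $\alpha \in \Pi$ with $w(\gamma) - \alpha \in \Delta^+$ and set $\delta = w^{-1}(\alpha) \in \Delta$. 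I then split on the grade of $\delta$: (a) if $\delta \in \Delta(0)^+$, then $\gamma-\delta \in I_w \cap \Delta(1)$ lies strictly below $\gamma$; (b) if $\delta \in \Delta(1)$, then $\gamma-\delta \in \Delta(0)^+$ gives $\delta \in I_w$ strictly below $\gamma$, while $\delta-\gamma \in \Delta(0)^+$ gives $w(\delta-\gamma) \in \Delta^+$ by $w \in W^0$, clashing with $w(\delta-\gamma) = \alpha - w(\gamma) \in -\Delta^+$; (c) if $\delta \in -\Delta(1)$, then $\gamma-\delta$ has $\ap_{i_0}$-coefficient $2$ and is not a root; and (d) if $\delta \in -\Delta(0)^+$, then $w(-\delta) = -\alpha \notin \Delta^+$ contradicts $w \in W^0$. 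Statement (iii) is proved by an entirely parallel argument, replacing $\min, \gamma-\beta, w(\gamma) \in \Pi$ by $\max, \gamma+\beta, -w(\gamma) \in \Pi$. The counting is then immediate: $\#\AN(\Delta(1)) = \#W^0 = \#(W/W(0))$ by the minimal-length coset-representative description of $W^0$, and $\#I_w = \#\Delta(1) - \#N(w) = \dim\g(1) - \ell(w)$.

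The main obstacle is the converse direction of (ii) — in particular subcase (b) with $\delta-\gamma \in \Delta(0)^+$, where the rank of $\delta$ exceeds that of $\gamma$ in $\Delta(1)$, so neither minimality of $\gamma$ nor the abelian constraint applies directly; the resolution depends on invoking $w \in W^0$ on $\delta-\gamma \in \Delta(0)^+$ to confront the two assertions about the sign of $w(\delta-\gamma)$.
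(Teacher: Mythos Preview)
Your proof is correct and follows essentially the same strategy as the paper: identify $W^0$ with the ideals of $\Delta(1)$ via the closed/co-closed characterisation of inversion sets, then for (ii) decompose $w(\gamma)$ as a sum of positive roots and pull back through $w^{-1}$. The one notable difference is in the converse direction of (ii): you insist that one summand of $w(\gamma)$ be \emph{simple}, which forces the four-way case split on the grade of $\delta=w^{-1}(\alpha)$; the paper instead takes an arbitrary decomposition $w(\gamma)=\mu_1+\mu_2$ and uses the abelian grading (namely $\Delta(2)=\varnothing$) to conclude immediately that one of $w^{-1}(\mu_1),w^{-1}(\mu_2)$ lies in $\Delta(0)$ and the other in $\Delta(1)$, collapsing your cases (a)--(d) into just two.
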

\begin{proof} We have $\Delta^+=\Delta(0)^+\cup\Delta(1)$. If $w\in W^0$, then $N(w)\subset \Delta(1)$
and we set
\[
    I_w:=\Delta(1)\setminus N(w)=\{\gamma\in \Delta(1) \mid w(\gamma)>0\} .
\]

1) If $\delta\in\Delta(0)^+$, $\gamma\in I_w$, and $\delta+\gamma$ is a root, then $\delta+\gamma
\in I_w$, since $\Delta^+\setminus N(w)$ is closed. Hence $I_w$ is an upper ideal of the poset 
$\Delta(1)$. Conversely, it is immediate that if $I\subset \Delta(1)$ is an upper ideal, then both 
$\Delta(1)\setminus I$ and 
$I\cup\Delta(0)^+$ are closed. Hence $\Delta(1)\setminus I=N(w)$ for a unique $w\in W^0$.
This yields the desired bijection and (i).

2) Suppose that $\gamma\in I_w$ and $\gamma\not\in \min(I_w)$. Then $\gamma=\delta+\gamma'$ for
some $\delta\in \Pi(0)$ and $\gamma'\in I_w$.  Hence $w(\gamma)=w(\delta)+w(\gamma')$ is a sum of
positive roots, i.e., $w(\gamma)\not\in\Pi$. 
\par Conversely,  
assume that $w(\gamma)\not\in\Pi$, i.e., 
$w(\gamma)=\mu_1+\mu_2$ with $\mu_i\in \Delta^+$. Then 
\[
w^{-1}(\mu_1)+w^{-1}(\mu_2)=\gamma\in \Delta(1) .
\]
We may assume that $w^{-1}(\mu_1)\in \Delta(0)$ and $w^{-1}(\mu_2)\in\Delta(1)$. Then
$w^{-1}(\mu_2)\in I_w$. The root $w^{-1}(\mu_1)$
is either positive or negative.  The assumption $w^{-1}(\mu_1)\in -\Delta(0)^+$ contradicts the fact that 
$w\in W^0$, and if  $w^{-1}(\mu_1)\in \Delta(0)^+$,  then $\gamma-w^{-1}(\mu_1)\in I_w$, hence
$\gamma\not\in \min(I_w)$. This proves (ii).

3) The proof of (iii) is similar to the previous argument and left to the reader. 
\end{proof}

\begin{cl}  \label{cor:summa}
For $w\in W^0$,  we have \ $\#\max(N(w))+ \#\Gamma_w\le \rk\g$.
\end{cl}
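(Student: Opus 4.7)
The plan is to read off each of the two cardinalities as the size of a subset of $\Pi$, using parts (ii) and (iii) of Theorem~\ref{thm:ab-case}, and then observe that the two subsets are disjoint.

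First I would rewrite $\Gamma_w$ and $\max(N(w))$ in terms of the action of $w$ on $\Pi$. By (ii), the map $\gamma\mapsto w(\gamma)$ is a bijection between $\Gamma_w$ and the set
\[
\Pi_w^+:=\{\ap\in\Pi\mid w^{-1}(\ap)\in\Delta(1)\},
\]
since $w^{-1}(\Pi)\cap\Delta(1)$ is precisely $w^{-1}(\Pi_w^+)$. Similarly, by (iii), $\gamma\mapsto -w(\gamma)$ gives a bijection between $\max(N(w))$ and
\[
\Pi_w^-:=\{\ap\in\Pi\mid w^{-1}(\ap)\in -\Delta(1)\}.
\]
Thus $\#\Gamma_w=\#\Pi_w^+$ and $\#\max(N(w))=\#\Pi_w^-$, and it remains to show $\#\Pi_w^++\#\Pi_w^-\le\#\Pi=\rk\g$.

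For this I would use that the grading is abelian, so $\Delta=\Delta(-1)\sqcup\Delta(0)\sqcup\Delta(1)$ is a disjoint decomposition and in particular $\Delta(1)\cap(-\Delta(1))=\varnothing$. Hence $\Pi_w^+$ and $\Pi_w^-$ are disjoint subsets of $\Pi$, which yields the required inequality $\#\Pi_w^++\#\Pi_w^-\le\#\Pi$.

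I do not foresee any real obstacle: the corollary is essentially a bookkeeping consequence of parts (ii)--(iii) of the theorem together with the trichotomy $\Delta=\Delta(-1)\sqcup\Delta(0)\sqcup\Delta(1)$ characteristic of the abelian case. The only minor point to mention, if desired, is that the inequality can fail to be an equality precisely when some $\ap\in\Pi$ satisfies $w^{-1}(\ap)\in\Delta(0)$, i.e.\ when $w^{-1}(\Pi)$ meets $\Delta(0)$.
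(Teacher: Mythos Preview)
Your proof is correct and is essentially the same argument as the paper's: both use parts (ii) and (iii) of Theorem~\ref{thm:ab-case} to realise $\Gamma_w$ and $\max(N(w))$ as (images under $w$ of) two disjoint subsets of $\Pi$, and then count. The paper phrases it in one line as ``$w(\Gamma_w)\subset\Pi$, $w(-\max(N(w)))\subset\Pi$, and all these roots are different'', while you name the two subsets $\Pi_w^\pm$ explicitly, but the content is identical.
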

\begin{proof}
We have $w(\Gamma_w)\subset \Pi$, \ $w(-\max(N(w)))\subset \Pi$, and all these 
roots are different.
\end{proof}

\begin{rem}   \label{rem:partition-Pi}
Since $\Gamma_w$ is an antichain and $\Delta(2)=\varnothing$, the roots in $\Gamma_w$ are
pairwise {\it strongly orthogonal}. That is, $\gamma\pm\gamma'$ is not a root for all
$\gamma_,\gamma'\in\Gamma_w$. Therefore, $w(\Gamma_w)$ is a strongly orthogonal set of simple 
roots, i.e., $w(\Gamma_w)$ represents a totally disjoint subset of the Dynkin diagram. The same 
also holds for $\max(N(w))$ and $w(-\max(N(w)))\subset \Pi$. Hence $w(\Gamma_w)$ and 
$w(-\max(N(w)))$ are totally disjoint subsets of $\Pi$ without common elements.
One can verify that the equality 
\[
     \#\Gamma_w+ \#\max(N(w))= \rk\g, \ \text{ i.e., } \ w(\Gamma_w) \bigsqcup w(-\max(N(w)))=\Pi
\] 
occurs for some $w\in W^0$ if and only if $\Delta$ is of type $\GR{A}{n}$ or
$\GR{C}{n}$. (And since the Dynkin diagram is a tree, such a partition of $\Pi$ into two totally disjoint subsets is unique, up to permutation). For $\GR{C}{n}$, one considers the unique abelian $\BZ$-grading with
$\Delta(1)=\{\esi_i+\esi_j \mid 1\le i\le j\le n\}$. Then one of the possibilities is
$\Gamma_w=\{\esi_1+\esi_n,\esi_2+\esi_{n-1},\dots\}$ and $\max(N(w))=\{
\esi_2+\esi_n, \esi_3+\esi_{n-1},\dots \}$. Here $\# \Gamma_w=\left[\frac{n+1}{2}\right]$ and 
$\# \max(N(w))=\left[\frac{n}{2}\right]$.
For $\GR{A}{n}$, one has to take the 
abelian grading corresponding to $\ap_i$ with $i=[\frac{n+1}{2}]$ or $i=n+1-[\frac{n+1}{2}]$. (Hence 
there are two possible gradings for  $\GR{A}{2k}$ and only one grading for $\GR{A}{2k-1}$.) The details
are left to the reader.
\end{rem}
For any subset $S\subset W$, we define its Poincar\'e polynomial by
$\boldsymbol{S}(t)=\sum_{w\in S}t^{\ell(w)}$.

\begin{thm}   \label{thm:M-polinom-ab}
For the abelian gradings, we have \ $\displaystyle
   \eus M_{\Delta(1)}(t)=\prod_{\gamma\in\Delta(1)}\frac{1-t^{\hot(\gamma)+1}}{1-t^{\hot(\gamma)}}$. 
\end{thm}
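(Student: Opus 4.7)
The plan is to identify $\eus M_{\Delta(1)}(t)$ with the parabolic Poincar\'e polynomial $\boldsymbol{W^0}(t)$, and then to compute the latter via the Kostant--Macdonald identity applied to both $\boldsymbol{W}(t)$ and $\boldsymbol{W(0)}(t)$, using the standard factorisation $\boldsymbol{W}(t)=\boldsymbol{W^0}(t)\cdot\boldsymbol{W(0)}(t)$.

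First, I would invoke the bijection $w\mapsto \Gamma_w$ of Theorem~\ref{thm:ab-case}. Since $I(\Gamma_w)=I_w$ and $\#I_w=\#\Delta(1)-\ell(w)$, the definition of the $\eus M$-polynomial gives
$$\eus M_{\Delta(1)}(t)=\sum_{w\in W^0}t^{\#\Delta(1)-\ell(w)}=t^{\#\Delta(1)}\boldsymbol{W^0}(t^{-1}).$$
By Lemma~\ref{lem:M-polinom-P(V)}(ii) applied to $V=\g(1)$, the polynomial $\eus M_{\Delta(1)}(t)$ is palindromic of degree $\#\Delta(1)$. Combining the two relations yields $\eus M_{\Delta(1)}(t)=\boldsymbol{W^0}(t)$, so the task is reduced to producing the desired product formula for $\boldsymbol{W^0}(t)$.

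Next, the standard decomposition $w=u\cdot v$ with $u\in W^0$, $v\in W(0)$ and $\ell(w)=\ell(u)+\ell(v)$ gives the factorisation $\boldsymbol{W}(t)=\boldsymbol{W^0}(t)\cdot\boldsymbol{W(0)}(t)$. The Kostant--Macdonald identity \cite[Cor.\,2.5]{macd72} evaluated on $\Delta$ and, summand-wise, on the subsystem $\Delta(0)$ reads
$$\boldsymbol{W}(t)=\prod_{\gamma\in\Delta^+}\frac{1-t^{\hot(\gamma)+1}}{1-t^{\hot(\gamma)}},\qquad \boldsymbol{W(0)}(t)=\prod_{\gamma\in\Delta(0)^+}\frac{1-t^{\hot_0(\gamma)+1}}{1-t^{\hot_0(\gamma)}},$$
where $\hot_0$ denotes height with respect to the simple roots $\Pi(0)$ of $\Delta(0)$. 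At this point I would exploit the defining feature of an abelian grading: $\Pi(1)=\{\ap_i\}$ is a single simple root, and any $\gamma\in\Delta(0)$ satisfies $[\gamma:\ap_i]=0$. Hence $\hot_0(\gamma)=\hot(\gamma)$ for every $\gamma\in\Delta(0)^+$, so the two products can be divided and the $\Delta(0)^+$-factors cancel outright:
$$\boldsymbol{W^0}(t)=\prod_{\gamma\in\Delta^+\setminus\Delta(0)^+}\frac{1-t^{\hot(\gamma)+1}}{1-t^{\hot(\gamma)}}=\prod_{\gamma\in\Delta(1)}\frac{1-t^{\hot(\gamma)+1}}{1-t^{\hot(\gamma)}}.$$

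The only conceptually delicate step is the height-compatibility $\hot_0=\hot|_{\Delta(0)^+}$, which is precisely where the abelian hypothesis is essential; in gradings with $\#\Pi(1)\ge 2$ or with nonempty $\Delta(i)$ for $|i|\ge 2$ this is no longer automatic, and a different strategy is needed (reflected in the separate treatment of the extra-special case later). Everything else -- the length formula $\#I_w=\#\Delta(1)-\ell(w)$, the parabolic factorisation of $\boldsymbol{W}(t)$, the Macdonald product, and palindromy of $\eus M$ -- is already in place, so the proof assembles with no further work.
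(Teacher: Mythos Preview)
Your proof is correct and follows essentially the same route as the paper: the bijection of Theorem~\ref{thm:ab-case}, palindromy of $\eus M_{\Delta(1)}$, the Kostant--Macdonald identity, and the factorisation $\boldsymbol{W}(t)=\boldsymbol{W^0}(t)\cdot\boldsymbol{W(0)}(t)$.

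One small correction to your commentary: the height-compatibility $\hot_0=\hot|_{\Delta(0)^+}$ is \emph{not} where the abelian hypothesis enters---it holds for any standard grading, since $\Pi(0)\subset\Pi$ and a root in $\Delta(0)$ has zero coefficient on every $\ap\in\Pi(1)$ regardless of $\#\Pi(1)$. The abelian hypothesis is used only in your very last equality $\Delta^+\setminus\Delta(0)^+=\Delta(1)$, which fails as soon as $\Delta(2)\ne\varnothing$; this is exactly why the extra-special case requires the additional factor $\frac{1-t^{h-1}}{1-t^{h}}$ coming from $\theta\in\Delta(2)$.
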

\begin{proof}
By the Kostant-Macdonald identity \cite[Cor.\,2.5]{macd72}, we have
\[ 
   \bow(t)= \prod_{\gamma\in\Delta^+}\frac{1-t^{\hot(\gamma)+1}}{1-t^{\hot(\gamma)}}
  \  \text{ and } \
     \boldsymbol{W(0)}(t)=\prod_{\gamma\in\Delta(0)^+}\frac{1-t^{\hot(\gamma)+1}}{1-t^{\hot(\gamma)}} .
\] 
The bijection $W^0\times W(0) \stackrel{\sim}{\longrightarrow} W$ has the property that 
if $w_1\in W^0$ and $w_2\in W(0)$, then $\ell(w_1w_2)=\ell(w_1)+\ell(w_2)$, see \cite[1.10]{hump}.
It follows that $\displaystyle 
\prod_{\gamma\in\Delta(1)}\frac{1-t^{\hot(\gamma)+1}}{1-t^{\hot(\gamma)}}=\boldsymbol{W^0}(t)$
and 
\[
    [t^i]\prod_{\gamma\in\Delta(1)}\frac{1-t^{\hot(\gamma)+1}}{1-t^{\hot(\gamma)}}=\#\{w\in W^0\mid 
    \ell(w)=i\} .
\]
By Theorem~\ref{thm:ab-case},  the latter equals the number of upper ideals of cardinality 
$\#\Delta(1)-i$. Since $\eus M_{\Delta(1)}(t)$ is palindromic and of degree $\#\Delta(1)$ 
(Lemma~\ref{lem:M-polinom-P(V)}), we are done.
\end{proof}

\begin{cl}   \label{cor:card-ab}
For the abelian gradings, we have 
$\#\anod=\eus M_{\Delta(1)}(1)=\prod_{\gamma\in\Delta(1)}\frac{\hot(\gamma)+1}{\hot(\gamma)}$.
\end{cl}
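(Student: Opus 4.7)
The plan is to derive this as a direct evaluation of the product formula of Theorem~\ref{thm:M-polinom-ab} at $t=1$. There are really two small verifications: that $\eus M_{\Delta(1)}(1)=\#\anod$, and that the right-hand product specialises to the displayed quotient of heights.

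First, the equality $\eus M_{\Delta(1)}(1)=\#\anod$ is immediate from the definition of the $\eus M$-polynomial given in the Introduction: $\eus M_{\Delta(1)}(t)=\sum_{\Gamma\in\anod} t^{\#I(\Gamma)}$ is a generating function in which each antichain contributes the monomial $t^{\#I(\Gamma)}$, so setting $t=1$ counts each antichain exactly once.

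Second, I would evaluate the product from Theorem~\ref{thm:M-polinom-ab} factor by factor at $t=1$. Each factor $\frac{1-t^{\hot(\gamma)+1}}{1-t^{\hot(\gamma)}}$ is $0/0$ at $t=1$, but cancelling the common factor $(1-t)$ via $1-t^n=(1-t)(1+t+\cdots+t^{n-1})$ rewrites it as
\[
   \frac{1-t^{\hot(\gamma)+1}}{1-t^{\hot(\gamma)}}
   =\frac{1+t+\cdots+t^{\hot(\gamma)}}{1+t+\cdots+t^{\hot(\gamma)-1}},
\]
which at $t=1$ equals $\frac{\hot(\gamma)+1}{\hot(\gamma)}$. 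Taking the product over $\gamma\in\Delta(1)$ yields the claimed formula.

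There is no real obstacle here: all the substantive content is already in Theorem~\ref{thm:M-polinom-ab}, and this corollary is a purely formal specialisation. The only thing to check is that the right-hand side of Theorem~\ref{thm:M-polinom-ab}, which is a priori only a rational function in $t$, is regular at $t=1$; this is ensured by the cancellation above and also, a posteriori, by the fact that the left-hand side $\eus M_{\Delta(1)}(t)$ is a polynomial.
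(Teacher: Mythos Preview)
Your proof is correct and is exactly the intended argument: the paper states this corollary with no proof, treating it as an immediate specialisation of Theorem~\ref{thm:M-polinom-ab} at $t=1$, together with the definitional identity $\eus M_{\Delta(1)}(1)=\#\anod$. Your explicit cancellation of the common factor $(1-t)$ in each factor is precisely the routine step the paper leaves to the reader.
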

We are not aware of a uniform general expression for the $\eus N$-polynomials related to the abelian 
case, but it is not hard to compute them directly. The resulting list is provided below. For each item, 
we point out the corresponding simple root $\ap_i$ of $\g$, the semisimple
Lie algebra $\widetilde{\g(0)}$, and the highest weight $\psi$ of the irreducible representation of
$\widetilde{\g(0)}$ in $\g(1)$.
Write $\varpi_i$ for the $i$-th fundamental weight of a simple factor of $\widetilde{\g(0)}$.

\vskip1.5ex 
\centerline{
{\it\bfseries The abelian $\BZ$-gradings}:  \ 
$(\g=\bigoplus_{j=-1}^1 \g(j),\ap_i)\leadsto (\widetilde{\g(0)},\psi)$  
{\it\bfseries and $\eus N$-polynomials}:}

\vskip1ex\noindent 
{\it 1.} $(\GR{A}{n+m-1},\ap_m){\leadsto} (\GR{A}{n-1}{\times}\GR{A}{m-1}, \varpi_1{+}\varpi_1')$, 
\ $\dim\sfr(\varpi_1{+}\varpi_1'){=}nm$, \ 
$\#\AN(\eus P(\varpi_1{+}\varpi_1'))=\genfrac{(}{)}{0pt}{}{n+m}{m}$.

\vskip1ex
$\eus N_{\Delta(1)}(t)=\displaystyle\sum_{i\ge 0} \genfrac{(}{)}{0pt}{}{n}{i}\genfrac{(}{)}{0pt}{}{m}{i}t^i$,
\ $\eus N'_{\Delta(1)}(1)=\displaystyle\frac{(n+m-1)!}{(n-1)!\,(m-1)!}$, \
$\eus N'_{\Delta(1)}(1)/\eus N_{\Delta(1)}(1)=\displaystyle\frac{n{\cdot} m}{n+m}$.

\vskip1.5ex\noindent 
{\it 2.} $(\GR{B}{n},\ap_1)\leadsto (\GR{B}{n-1}, \varpi_1)$, \quad $\dim\sfr(\varpi_1)=2n-1$, \quad
$\#\AN(\eus P(\varpi_1))=2n$.

$\eus N_{\Delta(1)}(t)=1+(2n-1)t$,
\quad
$\eus N'_{\Delta(1)}(1)/\eus N_{\Delta(1)}(1)=\displaystyle\frac{2n-1}{2n}$.

\vskip1.5ex\noindent 
{\it 3.} $(\GR{C}{n},\ap_n) \leadsto (\GR{A}{n-1}, 2\varpi_1)$, \quad $\dim\sfr(2\varpi_1)=n(n+1)/2$, \ 
$\#\AN(\eus P(2\varpi_1))=2^n$.

$\eus N_{\Delta(1)}(t)=\displaystyle\sum_{i\ge 0} \genfrac{(}{)}{0pt}{}{n+1}{2i}t^i$,
\quad
$\eus N'_{\Delta(1)}(1)/\eus N_{\Delta(1)}(1)=\displaystyle\frac{n+1}{4}$.

\vskip1ex\noindent 
{\it 4.} $(\GR{D}{n},\ap_{n-1} \text{ or } \ap_n)\leadsto (\GR{A}{n-1}, \varpi_2)$, \quad $\dim\sfr(\varpi_2)=n(n-1)/2$, \ 
$\#\AN(\eus P(\varpi_2))=2^{n-1}$.

$\eus N_{\Delta(1)}(t)=\displaystyle\sum_{i\ge 0} \genfrac{(}{)}{0pt}{}{n}{2i}t^i$,
\quad
$\eus N'_{\Delta(1)}(1)/\eus N_{\Delta(1)}(1)=\displaystyle\frac{n}{4}$.

\vskip1ex\noindent 
{\it 5.} $(\GR{D}{n},\ap_1)\leadsto (\GR{D}{n-1}, \varpi_1)$, \quad $\dim\sfr(\varpi_1)=2n-2$, \quad
$\#\AN(\eus P(\varpi_1))=2n$.  \nopagebreak

$\eus N_{\Delta(1)}(t)=1+(2n-4)t+t^2$,
\quad
$\eus N'_{\Delta(1)}(1)/\eus N_{\Delta(1)}(1)=1$.

\vskip1ex\noindent 
{\it 6.} $(\GR{E}{6}, \ap_{1} \text{ or } \ap_5)\leadsto (\GR{D}{5}, \varpi_4 \text{ or } \varpi_5)$, \quad $\dim\sfr(\varpi_5)=16$,
\quad 
$\#\AN(\eus P(\varpi_5))=\frac{9{\cdot}12}{1{\cdot}4}=27$.

$\eus N_{\Delta(1)}(t)=1+16t+10t^2$,
\quad
$\eus N'_{\Delta(1)}(1)/\eus N_{\Delta(1)}(1)=16/12$.

\vskip1ex\noindent 
{\it 7.} $(\GR{E}{7},\ap_1)\leadsto (\GR{E}{6}, \varpi_1)$, \quad $\dim\sfr(\varpi_1)=27$,
\quad 
$\#\AN(\eus P(\varpi_1))=\frac{10{\cdot}14{\cdot}18}{1{\cdot}5{\cdot}9}=56$.

$\eus N_{\Delta(1)}(t)=1+27t+27t^2+t^3$,
\quad
$\eus N'_{\Delta(1)}(1)/\eus N_{\Delta(1)}(1)=27/18$.
\vskip1ex\noindent
{\sl Comments on computations:}
\par \textbullet \ \ For item~1, $\Delta(1)$ is the direct product of two chains, see 
Example~\ref{ex:sl_n-simplest}. Here $\eus H(\Delta(1))=\eus H(\eus P(\varpi_1{+}\varpi_1'))$ is a rectangle and computations are straightforward. 
\par \textbullet \ \ For item~2, $\Delta(1)=\eus P(\GR{B}{n-1},\varpi_1)\simeq \eus C_{2n-1}$;
\par \textbullet \ \ For item~3,  $\g(1)$ is the unique maximal 
abelian $\be$-ideal in $\ut$, and $\eus N_{\Delta(1)}(t)$ is the {\it upper covering polynomial\/}
for the poset of all abelian ideals, see \cite[Theorem\,6.2]{coveri}. 
\par \textbullet \ \ Item~4 is related to item~3 via a shift of rank of $\widetilde{\g(0)}$, because 
there is an isomorphism of weight posets  
$\eus P(\GR{A}{n-1}, 2\varpi_1)\simeq \eus P(\GR{A}{n}, \varpi_2)$, see \cite[Theorem\,2.1]{mmj}.
\par \textbullet \ \ For items~5 and 6, the maximal rank level of $\Delta(1)$ is of size $2$. Hence
$\deg\ndt=2$ and $[t^2]\ndt$ is determined, because one knows $\eus N_{\Delta(1)}(1)=\#\anod$.
\par \textbullet \ \ For item~7, there is a unique rank level of maximal size and its size is $3$.
Therefore, $\deg\ndt=3$ and $[t^3]\ndt=1$, which again allows us to compute $[t^2]\ndt$.
\vskip1ex\noindent
A posteriori, it is always true in the abelian case that  
\beq   \label{eq:always-true-ab}
\frac{\eus N'_{\Delta(1)}(1)}{\eus N_{\Delta(1)}(1)}=\frac{\dim\g(1)}{h} .
\eeq
A relationship of this equality to conjectural properties of the reverse operator $\xdo$ is discussed 
in Section~\ref{sect:conj}.

\begin{rmk} The posets $\Delta(1)$ occurring in the abelian case are well known and usually called 
{\it minuscule posets}, see e.g. \cite{pr84,r-s13,stembr}. One can find several Hasse diagrams in 
\cite[Appendix]{stembr}. Since these posets are gaussian, there is a closed formula for the generating 
function for $m$-flags of order ideals in $\Delta(1)$ for any $m\in\BN$ \cite[Sect.\,6]{pr84}; in
particular, a formula for the $\eus M$-polynomial is obtained  if $m=1$.
Therefore, Theorem~\ref{thm:M-polinom-ab} is not really new. But our approach that exploits 
$\BZ$-gradings, the Kostant-Macdonald identity, 
and the set of minimal length representatives $W^0$ seems to be new. It quickly
provides a description of the upper (lower) ideals in $\Delta(1)$ and the corresponding antichains 
(Theorem~\ref{thm:ab-case}).
It can also be used for alternate proofs of other known results on $\anod$.
Anyway, our idea is that the minuscule posets should be treated as the simplest case of weight posets associated with $\BZ$-gradings. \end{rmk}
Recall that the abelian grading of $\g$ related to an admissible $\ap_i\in\Pi$ is defined by the minuscule 
weight $\vp_i^\vee$ of the dual root system $\Delta^\vee$. Let $\sfr(\vp_i^\vee)$ be the 
corresponding representation of the dual Lie algebra $\g^\vee$ and 
$\eus P(\vp_i^\vee)$ its weight poset. 

\begin{thm}[cf.~{\cite[Theorem\,11]{pr84}}]    \label{thm:isom-poset-ab}
The posets $(\anod, \le_{up})$ and $(\eus P(\vp_i^\vee), \curle)$ are naturally isomorphic. 
\end{thm}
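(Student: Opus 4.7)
The plan is to combine the bijection $W^0 \stackrel{1:1}{\longrightarrow} \AN(\Delta(1))$, $w\mapsto \Gamma_w$, of Theorem~\ref{thm:ab-case} with the standard bijection $W^0 \stackrel{1:1}{\longrightarrow} \eus P(\vp_i^\vee)$, $w\mapsto w(\vp_i^\vee)$, coming from the minusculity of $\vp_i^\vee$ and from the fact (recorded in \S\ref{subs:classes}) that the stabiliser of $\vp_i^\vee$ in $W$ equals $W(0)$. I would then verify that the composite $\Gamma_w \leftrightarrow w(\vp_i^\vee)$ preserves the covering relations of the two graded posets.

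For covers in $(\AN(\Delta(1)), \le_{up})$: by the general description recalled in \S\ref{sec:general}, $I_w$ is covered by $I_{w'}$ iff $I_{w'}=I_w\cup\{\gamma\}$ for some $\gamma\in\max(\Delta(1)\setminus I_w)=\max(N(w))$. By Theorem~\ref{thm:ab-case}(iii), $\gamma=-w^{-1}(\ap)$ for a unique $\ap\in\Pi$, and standard inversion-set arithmetic then gives $w'=s_\ap w$ with $\ell(w')=\ell(w)-1$ and $N(w')=N(w)\setminus\{\gamma\}$.

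For covers in $(\eus P(\vp_i^\vee),\curle)$: since $\sfr(\vp_i^\vee)$ is a minuscule representation of $\g^\vee$, $\ap(\mu)\in\{-1,0,1\}$ for every weight $\mu$ and every $\ap\in\Pi$, and $\mu$ is covered by $\mu+\ap^\vee = s_\ap(\mu)$ precisely when $\ap(\mu)=-1$. Writing $\mu=w(\vp_i^\vee)$, we have $\ap(\mu) = w^{-1}(\ap)(\vp_i^\vee)$, which by construction of the abelian grading is exactly the grade of the root $w^{-1}(\ap)$. Hence $\ap(\mu)=-1$ iff $-w^{-1}(\ap)\in\Delta(1)$ and $w^{-1}(\ap)<0$; the alternative $-w^{-1}(\ap)\in\Delta(0)^+$ is excluded because $w\in W^0$ would then force $w(-w^{-1}(\ap))=-\ap\in\Delta^+$, a contradiction. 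Thus $\ap(\mu)=-1$ iff $-w^{-1}(\ap)\in N(w)\cap\Delta(1)=N(w)$, which by Theorem~\ref{thm:ab-case}(iii) is equivalent to $-w^{-1}(\ap)\in\max(N(w))$; and the covering weight is $(s_\ap w)(\vp_i^\vee)$.

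The two descriptions of covering relations now match perfectly: in each case a cover corresponds to $w\leadsto s_\ap w$ with $\ap\in\Pi$, $\ell(s_\ap w)=\ell(w)-1$, and $-w^{-1}(\ap)\in\max(N(w))$; the direction is consistent as well ($I_w\subsetneq I_{w'}$ on one side, $\mu \prec \mu+\ap^\vee$ on the other). Since both posets are finite and graded, agreement of the Hasse diagrams yields the desired isomorphism. The main difficulty lies in the weight-side analysis: correctly identifying $\ap(w(\vp_i^\vee))$ with the grade of $w^{-1}(\ap)$ and then invoking the $W^0$-hypothesis to rule out $-w^{-1}(\ap)\in\Delta(0)^+$; once this is in place, minusculity forces the covering relations to be given by left multiplication by simple reflections, exactly as on the antichain side.
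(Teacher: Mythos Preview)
Your proof is correct and follows essentially the same route as the paper: both set up the bijection $\Gamma_w \leftrightarrow w(\vp_i^\vee)$ via $W^0$ and verify that the covering relations match through left multiplication by a simple reflection $s_\ap$, using that $(\vp_i^\vee, w^{-1}(\ap))$ equals the grade of $w^{-1}(\ap)$. The only organisational difference is that you invoke Theorem~\ref{thm:ab-case}(iii) to characterise $\max(N(w))$ directly (and handle both directions at once), whereas the paper carries out the forward and backward implications separately by explicit computation; your passage excluding ``the alternative $-w^{-1}(\ap)\in\Delta(0)^+$'' is a bit redundantly phrased (once you know the grade is $-1$ you already have $-w^{-1}(\ap)\in\Delta(1)$), but the logic is sound.
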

\begin{proof}
For a minuscule weight $\vp_i^\vee$, it is known that $\eus P(\vp_i^\vee)=W\vp_i^\vee$
as a set~\cite[Ch.\,VIII,\ \S\,7, n$^0$\,3]{bour7-8}. Since $W(0)$ is the stabiliser of $\vp_i^\vee$, we have
$\eus P(\vp_i^\vee)=\{w(\vp_i^\vee)\mid w\in W^0\}$. By Theorem~\ref{thm:ab-case}, this yields the 
bijection $\Psi: \anod \to \eus P(\vp_i^\vee)$, $\Gamma_w\mapsto w(\vp_i^\vee)$.

Let us  prove that $\Psi$ respects the partial orders. Recall that the order
`$\le_{up}$' corresponds to the inclusion of the corresponding upper ideals,
see Section~\ref{sec:general}.

{\sf\bfseries (a)} \ Suppose that $\Gamma_w$ covers $\Gamma_{w'}$ in $\anod$ for some
$w,w'\in W^0$, i.e., $I_w\supset I_{w'}$ and $\# I_w=(\# I_{w'})+1$. Then
$ \# N(w')= \# N(w)+1$ and $N(w')= N(w)\cup\{\gamma\}$, 
see Theorem~\ref{thm:ab-case}(i). Therefore, $w'=s_\ap w$ for some $\ap\in\Pi$ and
hence $N(w')=N(w)\cup\{w^{-1}(\ap)\}$. In particular, 
$\gamma=w^{-1}(\ap)\in\Delta(1)\subset \Delta^+$. Next, 
\[
  w'(\vp_i^\vee)=s_\ap w(\vp_i^\vee)=
  w(\vp_i^\vee)-(w(\vp_i^\vee),\ap)\ap^\vee=
   w(\vp_i^\vee)-\ap^\vee ,
\]
since 
$(\vp_i^\vee, w^{-1}(\ap))=1$. Whence  $w(\vp_i^\vee)-w'(\vp_i^\vee)=\ap^\vee\in\Pi^\vee$,
i.e., $w(\vp_i^\vee)=\Psi(\Gamma_w)$ covers  $w'(\vp_i^\vee)=\Psi(\Gamma_{w'})$ in 
$\eus P(\vp_i^\vee)$.

{\sf\bfseries (b)} \ Conversely, suppose that $w(\vp_i^\vee)$ covers  $w'(\vp_i^\vee)$ in 
$\eus P(\vp_i^\vee)$ for some $w,w'\in W^0$, i.e., 
$w(\vp_i^\vee)-w'(\vp_i^\vee)=\ap^\vee\in\Pi^\vee$.
Then $(w(\vp_i^\vee)-w'(\vp_i^\vee), \ap)=2$, and the only possibility in the abelian setting 
is that $(w(\vp_i^\vee),\ap)=1$ and
$(w'(\vp_i^\vee), \ap)=-1$. It follows that  $(\vp_i^\vee, w^{-1}(\ap))=1$ and hence 
$w^{-1}(\ap)\in \Delta(1)\subset \Delta^+$. Therefore, $\ell(s_\ap w)=\ell(w)+1$ and 
$N(s_\ap w)=N(w)\cup \{w^{-1}(\ap)\}\subset \Delta(1)$. Consequently, $s_\ap w\in W^0$.
Furthermore,
\[
     s_\ap w(\vp_i^\vee)=w(\vp_i^\vee)-\ap^\vee=w'(\vp_i^\vee) .
\]
Whence $s_\ap w=w'$. This implies that
$N(w')=N(w)\cup \{w^{-1}(\ap)\}$. Thus, 
$I_w=I_{w'}\cup \{w^{-1}(\ap)\}$, and we are done.
\end{proof}

\begin{rmk}
Let `$\leq$' denote the Bruhat order in $W$. It can be shown directly that the posets 
$(\anod, \le_{lo})$ and $(W^0, \leq)$ are isomorphic. This yields an anti-isomorphism of $(W^0, \leq)$ 
and $(\eus P(\vp_i^\vee),\curle)$, which is a particular case of a general result of Proctor, 
see \cite[Prop.\,3]{pr82b}. (Note that Proctor's ``Bruhat order'' is opposite to the usual one!)
\end{rmk}

\section{Antichains and upper ideals in the extra-special case}
\label{sect:ext-case}

\noindent
In this section, we consider the extra-special $\BZ$-grading of $\g$. Now
$\Delta^+=\Delta(0)^+\cup\Delta(1)\cup \Delta(2)$, $\Delta(2)=\{\theta\}$,  and
$\Delta(1)=\{\gamma\in\Delta\mid (\gamma,\theta^\vee)=1\}$ has the following obvious properties:

{\bf --} \ If $\gamma_1,\gamma_2\in\Delta(1)$ and $\gamma_1+\gamma_2$ is a root, then 
$\gamma_1+\gamma_2=\theta$;

{\bf --} \ The reflection $s_\theta\in W$ takes $\gamma\in \Delta(1)$ to $\gamma-\theta\in\Delta(-1)$.
Hence $-s_\theta(\gamma)\in\Delta(1)$ and $\gamma+(-s_\theta(\gamma))=\theta$.
\\
Let $h^*$ be the {\it dual Coxeter number\/} of $\Delta$. By definition, $h^*-1=\hot(\theta^\vee)$, the 
height of $\theta^\vee$ in $\Delta^\vee$. This implies that $h^*\le h$, and $h=h^*$ 
if and only if all the roots have the same length.
By~\cite{rudi}, the total number of roots in $\Delta$ that are not orthogonal to $\theta$ is $4h^*-6$.
It follows that $\#\Delta(1)=\frac{1}{2}(4h^*-6-2)=2h^*-4$.   

\begin{df}  \label{def:lagrangian}
 A subset $S\subset \Delta(1)$ is said to be {\it Lagrangian}, if $\#S=\frac{1}{2}\#\Delta(1)=h^*-2$ and
 there are no roots $\gamma_1,\gamma_2\in S$ such that $\gamma_1+\gamma_2=\theta$.
\end{df}
\noindent
This terminology is justified by the fact that the $\te$-stable subspace of $\g(1)$ corresponding to such $S$ is Lagrangian w.r.t.  the symplectic structure of $\g(1)$ as  $\widetilde{\g(0)}$-module.
\par
 As $ \Delta(1)$ is the disjoint union of pairs 
$\{\gamma,\theta-\gamma\}=\{\gamma, -s_\theta(\gamma)\}$, $S$ is Lagrangian if and only if 
$\Delta(1)\setminus S$ is. Equivalently, 
\beq  \label{eq:lagrange}
  \text{ $S$ is Lagrangian if and only if } \ S =\Delta(1)\setminus -s_\theta(S) .
\eeq

\begin{lm}  \label{lm:prelim-extra}
Let $I\subset\Delta(1)$ be an upper ideal. 
\begin{itemize}
\item[\sf (a)] \ If\/  $\gamma_1+\gamma_2=\theta$ for some\/ $\gamma_1,\gamma_2\in I$, then
there are no pairs $\mu_1,\mu_2\in \Delta(1)\setminus I$ such that $\mu_1+\mu_2=\theta$. In this case,
$\# I>\frac{1}{2}\#\Delta(1)$.
\par
\item[\sf (b)]  If\/ $\mu_1+\mu_2=\theta$ for some\/ $\mu_1,\mu_2\in \Delta(1)\setminus I$, then there are no pairs
$\gamma_1,\gamma_2\in I$ such that $\gamma_1+\gamma_2=\theta$. In this case,
$\# I<\frac{1}{2}\#\Delta(1)$.
\end{itemize}
\end{lm}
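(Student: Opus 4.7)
The plan is to first establish the exclusivity --- namely, that a pair $\{\gamma,\theta-\gamma\}\subset I$ (``II'') and a pair $\{\mu,\theta-\mu\}\subset\Delta(1)\setminus I$ (``OO'') cannot both occur --- and then to read off the cardinality bounds from a pair count. Parts (a) and (b) state the same exclusivity from opposite sides, so they are logically equivalent; I will argue (a) directly.

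The structural tool I rely on is that $\sigma:\gamma\mapsto\theta-\gamma$ is an order-reversing involution on the poset $\Delta(1)$. Indeed, if $\gamma\curle\gamma'$ in $\Delta(1)$, then $\gamma'-\gamma$ has $\theta^\vee$-weight $0$ and is a non-negative $\BZ$-combination of $\Pi$, hence of $\Pi(0)$; as $\sigma(\gamma)-\sigma(\gamma')=\gamma'-\gamma$, we obtain $\sigma(\gamma')\curle\sigma(\gamma)$. Alongside this, I group $\Delta(1)$ into its $h^*-2$ Lagrangian pairs $\{\gamma,\theta-\gamma\}$ and label each pair II, IO, or OO according to how many of its elements lie in $I$; call the respective counts $a,b,c$.

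For the exclusivity, suppose for contradiction that $\gamma_1,\gamma_2=\theta-\gamma_1\in I$ and $\mu_1,\mu_2=\theta-\mu_1\in\Delta(1)\setminus I$. For each $i\in\{1,2\}$, the sum $\gamma_i+\mu_1$ cannot be a root: it would have $\theta^\vee$-weight $2$, hence equal $\theta$, forcing $\mu_1=\gamma_{3-i}\in I$. So in the $\mu_1$-string through $\gamma_i$ the top step vanishes and $(\gamma_i,\mu_1^\vee)\ge 0$. Since $(\gamma_1,\mu_1^\vee)+(\gamma_2,\mu_1^\vee)=(\theta,\mu_1^\vee)\ge 1$ (the latter because $\theta-\mu_1=\mu_2\in\Delta$), some index $i$ gives $(\gamma_i,\mu_1^\vee)\ge 1$, hence $\gamma_i-\mu_1\in\Delta$; and $(\gamma_i-\mu_1,\theta^\vee)=0$ places $\gamma_i-\mu_1\in\Delta(0)$. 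I now split on the sign. If $\gamma_i-\mu_1\in\Delta(0)^+$, then $\mu_1\curle\gamma_i$ in $\Delta(1)$, so by $\sigma$'s order-reversing property $\mu_2=\sigma(\mu_1)\curge\sigma(\gamma_i)=\gamma_{3-i}\in I$, forcing $\mu_2\in I$, a contradiction. If instead $\mu_1-\gamma_i\in\Delta(0)^+$, then $\mu_1\curge\gamma_i\in I$ forces $\mu_1\in I$, again a contradiction.

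Finally, the cardinality bounds follow from $a+b+c=h^*-2=\tfrac12\#\Delta(1)$ and $\#I=2a+b$: under the hypothesis of (a) we have $a\ge 1$ and (by the exclusivity just proved) $c=0$, so $\#I=a+(a+b)>a+b+c=\tfrac12\#\Delta(1)$; dually, under the hypothesis of (b) we have $c\ge 1$ and $a=0$, so $\#I=b=(h^*-2)-c<\tfrac12\#\Delta(1)$. The main delicacy I expect is the root-theoretic step --- ruling out $\gamma_i+\mu_1\in\Delta$ and then producing $\gamma_i-\mu_1\in\Delta(0)$ via the inequality on $\mu_1^\vee$-pairings --- everything else is a clean application of the involution $\sigma$ together with the upward closure of $I$.
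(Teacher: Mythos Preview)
Your argument is correct and follows the same core idea as the paper: assume both an ``II'' pair and an ``OO'' pair exist, produce a root $\gamma_i-\mu_j\in\Delta(0)$, and derive a contradiction from upward closure of $I$ by splitting on the sign of this root. The cardinality bounds then follow from the pair count, which you spell out explicitly (the paper leaves this step to the reader).

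The one notable difference is how you reach the existence of $\gamma_i-\mu_1\in\Delta$. You go through a root-string argument: first rule out $\gamma_i+\mu_1\in\Delta$, deduce $(\gamma_i,\mu_1^\vee)\ge 0$, then use $(\gamma_1,\mu_1^\vee)+(\gamma_2,\mu_1^\vee)=(\theta,\mu_1^\vee)\ge 1$ to force one pairing to be positive. The paper's route is shorter: from $(\gamma_1+\gamma_2,\mu_1+\mu_2)=(\theta,\theta)>0$ one gets $(\gamma_i,\mu_j)>0$ for some $i,j$ directly, hence $\gamma_i-\mu_j\in\Delta$. The paper then exploits the identity $\gamma_1-\mu_1=\mu_2-\gamma_2$ (both sums equal $\theta$) to handle both sign cases without naming the involution $\sigma$. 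Your explicit use of $\sigma$ as an order-reversing involution is a perfectly good substitute and arguably clarifies why the two cases are symmetric, but the paper's one-line inner-product expansion is the cleaner path to the key root.
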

\begin{proof}
Assume that  $\gamma_1+\gamma_2=\theta$ and  $\mu_1+\mu_2=\theta$. Since 
$(\gamma_1+\gamma_2, \mu_1+\mu_2)> 0$, we may conclude that, say, $(\gamma_1,\mu_1)>0$. 
Hence $\gamma_1-\mu_1=\mu_2-\gamma_2\in \Delta(0)$. Now, the assumption that 
$\mu_2-\gamma_2\in\Delta(0)^+$ implies that $\mu_2\in I$; while the assumption that 
$\mu_1-\gamma_1\in\Delta(0)^+$ implies that $\mu_1\in I$. These contradictions prove both (a) and (b).
\end{proof}

In what follows, we have to keep track of the length of roots in $\Delta$. Write $\Delta_l$ (resp. $\Pi_l$) 
for the set of {\it all} (resp. {\it simple}) long roots. In the {\bf ADE}-case, all roots are assumed to be both
long and short.
Recall that the highest root $\theta$ is always long and $\theta^\vee$ is always short.

\begin{thm}   \label{thm:extra-case}
{\sf\bfseries (i)}  There is a surjective map $\tau: W^0\to \anod$, $w\mapsto \Gamma_w$, such that 
\[
  \#\tau^{-1}(\Gamma_w)=\begin{cases} 1, & \text{if the upper ideal \ $I(\Gamma_w)$ \ is not Lagrangian}, \\
  2, & \text{if the upper ideal \ $I(\Gamma_w)$ \ is Lagrangian}.  \end{cases}
\]
More precisely, $\Gamma_w=\min(\Delta(1)\setminus N(w))$ and $I_w:=I(\Gamma_w)=\Delta(1)\setminus N(w)$.
\par
{\sf \bfseries(ii)} 
The  upper ideal $I_w$ is Lagrangian {\sl if and only if} $w(\theta)\in\pm \Pi_l$ {\sl if and only if}
$\tau^{-1}(\Gamma_w)=\{w,ws_\theta\}$. In particular, 
$\#\anod=\#W^0-\#\Pi_l=(h-1){\cdot}\#\Pi_l$.
\end{thm}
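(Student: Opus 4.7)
The plan is to mirror Theorem~\ref{thm:ab-case} while keeping track of the one extra root $\theta\in\Delta(2)$ that may or may not lie in $N(w)$ for $w\in W^0$. The general correspondence between $w\in W$ and subsets $M\subset\Delta^+$ such that both $M$ and $\Delta^+\setminus M$ are closed still applies, and $w\in W^0$ is characterised by $M\cap\Delta(0)^+=\emptyset$; thus the elements of $W^0$ biject with the admissible $M\subset\Delta(1)\cup\{\theta\}$.

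First I will check that $I_w:=\Delta(1)\setminus N(w)$ is always an upper ideal of $\Delta(1)$. This works exactly as in the abelian case: if $\gamma\in I_w$, $\delta\in\Pi(0)$, and $\gamma+\delta\in\Delta$, then $\gamma+\delta\in\Delta(1)$ automatically, and closedness of $\Delta^+\setminus N(w)$ gives $\gamma+\delta\in I_w$. Hence $\tau:W^0\to\anod$, $w\mapsto\Gamma_w=\min(I_w)$, is well-defined with $I(\Gamma_w)=I_w$.

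For surjectivity and fibre sizes, I will fix an upper ideal $I\subset\Delta(1)$ and consider the two candidates $M_0=\Delta(1)\setminus I$ and $M_1=M_0\cup\{\theta\}$. A crucial preliminary observation is that $\gamma+\delta=\theta$ is impossible for $\gamma\in\Delta(1)$ and $\delta\in\Delta(0)^+$, because $\theta-\gamma$ has $\tilde h$-weight $1$ and thus cannot equal $\delta\in\Delta(0)$. Combined with Lemma~\ref{lm:prelim-extra}, the closedness requirements reduce to the two conditions: $M_0$ together with its complement is closed iff no pair in $\Delta(1)\setminus I$ sums to $\theta$ (i.e.\ $I$ is not in Case~A2), and $M_1$ together with its complement is closed iff no pair in $I$ sums to $\theta$ (i.e.\ $I$ is not in Case~A1). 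Hence in Case~A1 only $M_0$ yields a $w\in W^0$, in Case~A2 only $M_1$ does, and in the Lagrangian case both yield distinct preimages whose inversion sets differ by $\{\theta\}$. Since $s_\theta$ fixes $\Delta(0)$ pointwise and sends $\gamma\in\Delta(1)$ to $\gamma-\theta$, a direct computation shows $N(ws_\theta)=N(w)\cup\{\theta\}$ when $I_w$ is Lagrangian and $\theta\notin N(w)$; in particular the two Lagrangian preimages are $\{w,ws_\theta\}$.

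Finally I will prove (ii) by a short weight-split analysis. Suppose $I_w$ is Lagrangian and $w(\theta)>0$. If $w(\theta)=\mu_1+\mu_2$ with $\mu_i\in\Delta^+$ were non-simple, then pulling back gives $\theta=w^{-1}(\mu_1)+w^{-1}(\mu_2)$, and matching $\tilde h$-weights shows that the only possibility is $w^{-1}(\mu_1),w^{-1}(\mu_2)\in\Delta(1)$ (the splits $(2,0)$ and $(0,2)$ force $\mu_2=0$ or $\mu_1=0$). But then $w^{-1}(\mu_i)\in I_w$ form a pair summing to $\theta$, putting $I_w$ in Case~A1 and contradicting the Lagrangian hypothesis. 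Hence $w(\theta)\in\Pi$, and since $\theta$ is long, $w(\theta)\in\Pi_l$; the case $w(\theta)<0$ follows by replacing $w$ with $ws_\theta$. The converse is immediate: a pair in $I_w$ (resp.\ $\Delta(1)\setminus I_w$) summing to $\theta$ would express $w(\theta)$ (resp.\ $-w(\theta)$) as a sum of two positive roots, which is impossible when $w(\theta)\in\pm\Pi_l$. For the count, $W(0)$ is the stabiliser of $\theta$ in $W$ and $W\theta=\Delta_l$, so each $\alpha\in\Delta_l$ has a unique preimage in $W^0$ under $w\mapsto w(\theta)$; therefore $\#\{w\in W^0:w(\theta)\in\pm\Pi_l\}=2\#\Pi_l$, which together with the fibre analysis gives $\#\anod=\#W^0-\#\Pi_l=(h-1)\#\Pi_l$ via the standard identity $|\Delta_l|=h|\Pi_l|$. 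The main technical difficulty will be the closedness bookkeeping across Cases~A1, A2 and~B, which is largely tamed by the impossibility of $\gamma+\delta=\theta$ for $(\gamma,\delta)\in\Delta(1)\times\Delta(0)^+$.
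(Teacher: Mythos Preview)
Your proof is correct and follows essentially the same approach as the paper's: you verify that $I_w$ is an upper ideal as in the abelian case, then determine for each upper ideal $I$ which of the two candidate inversion sets $\Delta(1)\setminus I$ and $(\Delta(1)\setminus I)\cup\{\theta\}$ are admissible via the trichotomy supplied by Lemma~\ref{lm:prelim-extra}, identify the two Lagrangian preimages as $\{w,ws_\theta\}$ by computing $N(ws_\theta)$, and finally characterise the Lagrangian case by the splitting argument showing $w(\theta)\in\pm\Pi_l$. Your explicit labelling of the candidates $M_0,M_1$ and of the cases A1/A2/B makes the closedness bookkeeping slightly more transparent than the paper's prose, but the ideas are identical.
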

\begin{proof}
{\bf (i)} \ If $w\in W^0$, then $N(w)\subset \Delta(1)\cup\{\theta\}$. Set $I_w:=\Delta(1)\setminus N(w)$
and $\Gamma_w:=\min(I_w)$. As in the proof of Theorem~\ref{thm:ab-case}, one readily verifies that 
$I_w$ is an upper ideal of  $\Delta(1)$. Hence $\Gamma_w\in \anod$.  This yields the mapping 
$\tau: W^0\to \anod$ with $\tau(w)=\Gamma_w$. Let us prove that $\tau$ is onto and the fibres 
of $\tau$ match the above description.

Let $I\in \eus J_+(\Delta(1))$. To obtain $w\in W^0$ with $I=I_w$, we need two complementary closed subsets of $\Delta^+$ such that one of them contains $\Delta(0)^+\cup I$, and the other contains $\Delta(1)\setminus I$. That is, the only problem is how to handle $\theta$.
By Lemma~\ref{lm:prelim-extra}, there are three possibilities:

{\sf (a)} \ {\it $\theta$ is a sum of two elements from $I$, but not from $\Delta(1)\setminus I$}.
\\
The only suitable pair of complementary closed subsets of $\Delta^+$ is
$\Delta(0)^+\cup I\cup\{\theta\}$ and $\Delta(1)\setminus I$. Here $N(w)=\Delta(1)\setminus I$,
$\# I>\frac{1}{2}\#\Delta(1)$, and $w$ is the unique element of $W^0$ such that $I=I_w$.

{\sf (b)} \ {\it $\theta$ is a sum of two elements from $\Delta(1)\setminus I$, but not from $I$}.
\\
The only suitable pair of complementary closed subsets of $\Delta^+$ is $\Delta(0)^+\cup I$ 
and $(\Delta(1)\setminus I)\cup\{\theta\}$. Here $N(w)=(\Delta(1)\setminus I)\cup\{\theta\}$,
$\# I<\frac{1}{2}\#\Delta(1)$, and $w$ is the unique element of $W^0$ such that $I=I_w$.

{\sf (c)} \  {\it $\theta$ cannot be written as a sum of two elements from either $I$ or $\Delta(1)\setminus I$.}
\\
Then $I$ is Lagrangian and both $\Delta(1)\setminus I$ and $(\Delta(1)\setminus I)\cup\{\theta\}$ 
are the inversion sets of certain elements of $W^0$. That is, $\tau^{-1}(\min(I))$ consists of two 
elements. 
\par
This proves part (i).

{\bf (ii)} \ Let $I\in \eus J_+(\Delta(1))$ be Lagrangian. Then $\tau^{-1}(\min(I))=\{w',w''\}$, where
$N(w')=\Delta(1)\setminus I$ and $N(w'')=(\Delta(1)\setminus I)\cup\{\theta\}$. We claim that
$w''=w's_\theta$. Indeed, $s_\theta$ acts trivially on $\Delta(0)$ and therefore
\[
  N(w's_\theta)=\bigl(\Delta(1)\setminus(-s_\theta N(w')\bigr)\cup\{\theta\}=N(w')\cup\{\theta\}=N(w'') ,
\]
where the second equality holds because $N(w')$ is a Lagrangian subset, cf. Eq.~\eqref{eq:lagrange}. In particular, $w'(\theta)=-w''(\theta)$, and our choice is that $w'(\theta)\in \Delta^+$.
If $w'(\theta)=\delta_1+\delta_2$ is a sum of positive roots, then $\theta=w'^{-1}(\delta_1)+w'^{-1}(\delta_2)$, and the only possibility is that both
roots $w'^{-1}(\delta_1), w'^{-1}(\delta_2)$ belong to $\Delta(1)$. It follows that
$w'^{-1}(\delta_1), w'^{-1}(\delta_2)\in I=\Delta(1)\setminus N(w')$, which contradicts the fact that 
$I$ is Lagrangian. Thus, $w'(\theta)$ must be a (long)  simple root.
\par
The above argument also shows that if $w\in W^0$ and $w(\theta)\not\in \pm \Pi$, then $I_w$ cannot
be Lagrangian. (More precisely, if $w(\theta)\in \Delta^+\setminus \Pi$, then one finds 
$\gamma_1,\gamma_2\in I_w$ such that $\gamma_1+\gamma_2=\theta$; while if 
$w(\theta)\in -(\Delta^+\setminus \Pi)$, then one finds 
$\mu_1,\mu_2\in \Delta(1)\setminus I_w$ such that $\mu_1+\mu_2=\theta$.)
\par
Since $W(0)$ is the stabiliser of $\theta$ in $W$, the mapping $W^0\to \Delta_l$, $w\mapsto w(\theta)$, is one-to-one. Consequently, there are exactly $\#\Pi_l$ Lagrangian ideals.
\par Finally, it follows from~\cite[Ch.\,VI,\ \S\,1.11, Prop.\,33]{bour} that the total number of long roots is 
$h{\cdot}\#\Pi_l$.
\end{proof}

It is sometimes convenient to think of $\tau$ as the map from $W^0$ to $\eus J_+(\Delta(1))$ with
$\tau(w)=\Delta(1)\setminus N(w)$. As in the abelian case,
we can  give a characterisation of antichains associated with upper ideals
in $\Delta(1)$ via the corresponding elements of $W^0$.

\begin{thm}  \label{thm:next-extra}
For $I\in \eus J_+(\Delta(1))$, let $w_I$ be a corresponding element of\/ $W^0$. (For the Lagrangian 
ideals $I$, we specify the choice of $w_I$ below.) Then
\par
{\sf\bfseries (i)} \ $\gamma\in \min(I)$ if and only if $w_I(\gamma)\in\Pi$ \ (for a Lagrangian $I$, one has 
to choose $w_I$ such that $w_I(\theta)\in -\Delta^+$).
\par
{\sf\bfseries (ii)} \ $\gamma\in \max(\Delta(1)\setminus I)$ if and only if $w_I(\gamma)\in -\Pi$ \ 
(for a Lagrangian $I$, one has to choose $w_I$ such that  $w_I(\theta)\in \Delta^+$).
\end{thm}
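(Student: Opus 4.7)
\smallskip

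The plan is to adapt the proof of Theorem~\ref{thm:ab-case}(ii,\,iii) to the extra-special setting, where the root $\theta\in\Delta(2)$ creates new cases absent in the abelian case; I treat part (i) in detail, and part (ii) follows by the mirror argument. For the easy direction of (i)---$w_I(\gamma)\in\Pi$ implies $\gamma\in\min(I)$---the abelian argument carries over verbatim: if $\gamma\in I$ is not minimal, then $\gamma=\delta+\gamma'$ with $\delta\in\Pi(0)$ and $\gamma'\in I$, and applying $w_I$ would express the simple root $w_I(\gamma)$ as a sum of two positive roots $w_I(\delta)+w_I(\gamma')$ (using $w_I\in W^0$ and $\gamma'\in I=\Delta(1)\setminus N(w_I)$), a contradiction.

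For the converse, I assume $w_I(\gamma)\notin\Pi$, write $w_I(\gamma)=\mu_1+\mu_2$ with $\mu_i\in\Delta^+$, and pull back to $\gamma=w_I^{-1}(\mu_1)+w_I^{-1}(\mu_2)$. Since $\gamma\in\Delta(1)$ and $\Delta(\pm 2)=\{\pm\theta\}$, the levels of the two summands form one of the pairs $(0,1)$, $(1,0)$, $(-1,2)$, $(2,-1)$. In the \emph{tame} cases $(0,1)$ and $(1,0)$, one summand lies in $\Delta(0)^+$ (positivity from $w_I\in W^0$) and the other in $\Delta(1)\cap I$; the latter is strictly below $\gamma$ in the weight poset, so $\gamma\notin\min(I)$, exactly as in the abelian proof.

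The \emph{wild} cases $(-1,2)$ and $(2,-1)$ require $w_I(\theta)\in\Delta^+$. This is precluded by the prescribed Lagrangian choice $w_I(\theta)\in-\Delta^+$ and, via Lemma~\ref{lm:prelim-extra}(b), in the non-Lagrangian case (b). The only surviving possibility is non-Lagrangian case (a), where Theorem~\ref{thm:extra-case} supplies $\gamma_1,\gamma_2\in I$ with $\gamma_1+\gamma_2=\theta$; setting $\gamma^*:=\theta-\gamma\in\Delta(1)$, the wild decomposition forces $\gamma^*\in\Delta(1)\setminus I$, hence $\gamma\notin\{\gamma_1,\gamma_2\}$. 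The main obstacle will be to replace this wild decomposition by a tame one in this final subcase: the identity $\gamma-\gamma_j=\gamma_{3-j}-\gamma^*$ for $j\in\{1,2\}$, combined with the pairing $(\gamma^*,\gamma_1)+(\gamma^*,\gamma_2)=(\gamma^*,\theta)$, a root-norm computation, and the positivity $w_I(\gamma_{3-j}-\gamma^*)=w_I(\gamma_{3-j})-w_I(\gamma^*)>0$, should force $\gamma-\gamma_j\in\Delta(0)^+$ for some $j\in\{1,2\}$, whence $\gamma_j\in I$ lies strictly below $\gamma$ in the poset, contradicting minimality. Part (ii) is then proved symmetrically by applying the same reasoning to $-w_I(\gamma)=w_I(-\gamma)$: the wild cases now demand $w_I(\theta)\in-\Delta^+$, so the roles of (a) and (b) interchange and the opposite Lagrangian prescription $w_I(\theta)\in\Delta^+$ comes into play, with Lemma~\ref{lm:prelim-extra}(a) playing the role previously played by Lemma~\ref{lm:prelim-extra}(b).
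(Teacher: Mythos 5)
Your proposal is correct, and its skeleton coincides with the paper's: the easy direction, the decomposition $w_I(\gamma)=\mu_1+\mu_2$ with pull-back by $w_I^{-1}$, the case analysis by the levels of the two summands, and the use of the prescribed Lagrangian choice to exclude the occurrence of $\theta$. Where you genuinely diverge is in the one surviving subcase (non-Lagrangian with $w_I(\theta)\in\Delta^+$, i.e. $\theta=\gamma_1+\gamma_2$ with $\gamma_1,\gamma_2\in I$ and $\#I>\frac{1}{2}\#\Delta(1)$): the paper kills it by a counting/parity trick --- if $\gamma$ were minimal, then $I'=I\setminus\{\gamma\}$ would again be an upper ideal whose complement contains the summable pair $\{\gamma,\theta-\gamma\}$, so Lemma~\ref{lm:prelim-extra}(b) forces $\#I-1<\frac{1}{2}\#\Delta(1)<\#I$, impossible since $\#\Delta(1)$ is even --- whereas you exhibit an explicit element of $I$ strictly below $\gamma$. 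Your tentative step (``should force'') does go through: since $\gamma^*=\theta-\gamma\in\Delta(1)$ one has $(\gamma^*,\theta)>0$, hence $(\gamma^*,\gamma_i)>0$ for some $i$; as $\gamma^*\notin I$ and $\gamma_i\in I$ are distinct, non-proportional roots, $\gamma_i-\gamma^*=\gamma-\gamma_{3-i}$ is a root, necessarily in $\Delta(0)$; its image $w_I(\gamma_i)-w_I(\gamma^*)$ is a sum of two positive roots, hence positive, and since $w_I\in W^0$ this forces $\gamma-\gamma_{3-i}\in\Delta(0)^+$, so $\gamma$ lies strictly above $\gamma_{3-i}\in I$ and is not minimal. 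Your variant is more constructive (it produces the witness directly and dualizes verbatim for part (ii), which the paper only declares ``similar and dual''), at the price of a small root-combinatorial computation; the paper's variant is shorter but relies on the parity of $\#\Delta(1)$ and on the observation that deleting a minimal element of an upper ideal leaves an upper ideal. One cosmetic point: to rule out the wild case for non-Lagrangian ideals of type (b) you cite Lemma~\ref{lm:prelim-extra}(b), while the direct reason is that there $N(w_I)=(\Delta(1)\setminus I)\cup\{\theta\}$, so $w_I(\theta)\in-\Delta^+$; this does not affect correctness.
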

\begin{proof}
{\bf (i)} If $\gamma\not\in\min(I)$, then $\gamma=\gamma'+\delta$ for some 
$\gamma'\in I$ and  $\delta\in\Delta(0)^+$.
Whence $w_I(\gamma)=w_I(\gamma')+w_I(\delta)$ is a sum of positive roots, i.e.,
$w_I(\gamma)\not\in\Pi$ (for any choice of $w_I$ if
$I$ is Lagrangian!). 

Conversely, assume that $w_I(\gamma)\not\in\Pi$, i.e., $w_I(\gamma)=\delta_1+\delta_2$ is a sum of positive roots. Then
\[
    \gamma=w_I^{-1}(\delta_1)+w_I^{-1}(\delta_2) \in \Delta(1) .
\]
Set $\mu_i=w_I^{-1}(\delta_i)$. There are two possibilities for $\mu_1,\mu_2$:

{\sl \bfseries (a)} \ $\mu_1\in\Delta(0)$ and  $\mu_2\in\Delta(1)$.  \\
Then 
$\mu_2\in\Delta(1)\setminus N(w_I)=I$ and $\mu_1\in\Delta(0)^+$, since $w\in W^0$.
Hence $\gamma=\mu_1+\mu_2\not\in\min(I)$.

{\sl \bfseries (b)} \  $\mu_1\in\Delta(-1)$ and  $\mu_2=\theta\in\Delta(2)$. \\
Then $w_I(\theta)=\delta_2$ is positive. Hence $\theta\not\in N(w_I)$ and therefore 
$\# N(w_I)\le \frac{1}{2}\#\Delta(1)$, i.e., $\# I \ge\frac{1}{2}\#\Delta(1)$. 
\\
{\textbullet } \ For $I$ Lagrangian, we agree to choose $w_I$ such that $w_I(\theta)< 0$, which 
eliminates such a possibility for $\mu_1$ and $\mu_2$.
\\
{\textbullet } \ Assume that $\# I >\frac{1}{2}\#\Delta(1)$. We have $\gamma+(-\mu_1)=\theta$ and 
$-\mu_1\in \Delta(1)\setminus I$. If $\gamma\in\min(I)$, then $I':=I\setminus \{\gamma\}$ is again an  
upper ideal. Here $\gamma,-\mu_1 \not\in I'$ and their sum is $\theta$. By 
Lemma~\ref{lm:prelim-extra}(ii), we then have $(\#I)-1=\#I' < \frac{1}{2}\#\Delta(1)$. And this contradicts the fact that $\#\Delta(1)$ is even. 
\par
Thus, $\gamma\not\in\min(I)$ in all cases. 
\\[.7ex]
{\bf (ii)}  The proof here is similar and ``dual'' to the preceding part. For instance, 
at some point one refers to the ``dual'' 
fact that if $\gamma\in\max(\Delta(1)\setminus I)$, then $I\cup\{\gamma\}$ is again an upper ideal.
The details are left to the reader.
\end{proof}

Having computed the number of antichains (upper ideals) in $\Delta(1)$, we turn to computing the
$t$-analogues $\mdt$ and $\ndt$. Although the relationship between the upper ideals of $\Delta(1)$
and $W^0$ appeared to be more involved in the extra-special case than in the abelian one, 
the formula for $\mdt$ remains just the same!

\begin{thm}   \label{thm:mdt-extra}
For the extra-special gradings, we have \ 
$     \displaystyle
   \eus M_{\Delta(1)}(t)=\prod_{\gamma\in\Delta(1)}\frac{1-t^{\hot(\gamma)+1}}{1-t^{\hot(\gamma)}} .
$
\end{thm}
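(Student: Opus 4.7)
The strategy is to mirror the start of Theorem~\ref{thm:M-polinom-ab}, then exploit the palindromicity of $\mdt$ (Lemma~\ref{lem:M-polinom-P(V)}) to circumvent the fact that $\tau$ is no longer a bijection. The plan is to prove $\mdt=P(t)$, where $P(t):=\prod_{\gamma\in\Delta(1)}\frac{1-t^{\hot(\gamma)+1}}{1-t^{\hot(\gamma)}}$, by showing that these two palindromic polynomials agree on the lower half of their support.

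First, I would apply the Kostant--Macdonald identity to both $\bow(t)$ and $\boldsymbol{W(0)}(t)$ exactly as in Theorem~\ref{thm:M-polinom-ab}, using that for $\gamma\in\Delta(0)^+$ the height in the subroot system $\Delta(0)$ agrees with the height in $\Delta$ (since $\Delta(0)^+$ lies in the span of $\Pi(0)$). Taking the quotient and using $\Delta^+\setminus\Delta(0)^+=\Delta(1)\sqcup\{\theta\}$ together with $\hot(\theta)=h-1$, this yields
\[
\boldsymbol{W^0}(t)=\prod_{\gamma\in\Delta(1)\cup\{\theta\}}\frac{1-t^{\hot(\gamma)+1}}{1-t^{\hot(\gamma)}}=\frac{1-t^h}{1-t^{h-1}}\cdot P(t).
\]

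Next, note that $P(t)$ is palindromic of degree $d:=\#\Delta(1)$ (a direct check shows $t^d P(1/t)=P(t)$), and so is $\mdt$ by Lemma~\ref{lem:M-polinom-P(V)}; hence it suffices to verify $[t^j]P(t)=[t^j]\mdt$ for $j\le d/2$. From $(1-t^h)P(t)=(1-t^{h-1})\boldsymbol{W^0}(t)$, the low-degree expansion of $(1-t^{h-1})/(1-t^h)$ as a formal power series gives $[t^j]P(t)=[t^j]\boldsymbol{W^0}(t)$ for every $j<h-1$. Since $d/2=h^*-2\le h-2<h-1$ (using $h^*\le h$), this identity holds throughout $j\le d/2$.

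The final step is to compute $[t^j]\boldsymbol{W^0}(t)=\#\{w\in W^0:\ell(w)=j\}$ for $j\le d/2$ via Theorem~\ref{thm:extra-case}. For $j<d/2$, cases~(b) and~(c) produce only lengths $\ge d/2$, so only case~(a) contributes: each non-Lagrangian ideal $I$ with $\#I=d-j>d/2$ gives exactly one $w$ with $\ell(w)=j$, and palindromicity of $\mdt$ yields $[t^j]\boldsymbol{W^0}(t)=M_{d-j}=M_j$. For $j=d/2$, cases~(a) and~(b) are excluded by their size constraints, so only Lagrangian ideals contribute (each via $w'_I$ of length $d/2$), giving $[t^{d/2}]\boldsymbol{W^0}(t)=\#\Pi_l=M_{d/2}$, since size-$d/2$ ideals are precisely the Lagrangian ones. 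Combining, $[t^j]P(t)=[t^j]\mdt$ for $j\le d/2$, and palindromicity finishes the proof. The only delicate point is the bookkeeping: one must confirm that the second Lagrangian preimages $w''_I$ (length $d/2+1$) and all case~(b) elements (length $\ge d/2+2$) lie outside the range $[0,d/2]$, which is immediate from Theorem~\ref{thm:extra-case}.
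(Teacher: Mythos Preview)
Your overall strategy matches the paper's own proof: derive $\boldsymbol{W^0}(t)=\frac{1-t^h}{1-t^{h-1}}\,P(t)$ from the Kostant--Macdonald identity, show that $[t^j]P(t)=[t^j]\boldsymbol{W^0}(t)$ for $j\le d/2=h^*-2$, identify those coefficients with $M_j$ via the map $\tau$ of Theorem~\ref{thm:extra-case}, and then invoke palindromicity of both sides to conclude. The case analysis you give for $[t^j]\boldsymbol{W^0}(t)$ is correct.

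There is, however, a genuine gap at the sentence ``$P(t)$ is palindromic of degree $d$ (a direct check shows $t^d P(1/t)=P(t)$)''. The direct check only yields the identity $t^d P(1/t)=P(t)$ of \emph{rational functions}; it does \emph{not} show that $P(t)$ is a polynomial. Without knowing $P(t)\in\BZ[t]$, the functional equation does not translate into the coefficient relation $p_j=p_{d-j}$ for the Taylor expansion, so the reduction ``it suffices to verify $[t^j]P(t)=[t^j]\mdt$ for $j\le d/2$'' is unjustified. Concretely, $P(t)=\frac{1-t^{h-1}}{1-t^h}\,\boldsymbol{W^0}(t)$, and the potential poles at nontrivial $h$-th roots of unity are not ruled out by anything you have written; equivalently, one must show that $1+t+\dots+t^{h-1}$ divides $\boldsymbol{W^0}(t)$.

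The paper flags exactly this point (``\emph{A priori}, $P(t)$ is only a rational function, and our first goal is to prove that $P(t)$ is a polynomial'') and resolves it by invoking an external result: by \cite[Cor.\,3.6]{all-q-an}, the quotient $\boldsymbol{W^0}(t)\cdot\frac{1-t}{1-t^h}$ equals $t^{-(h-\hot(\theta^\vee))}\mathfrak M^0_{\theta^\vee}(t)$, a shifted Lusztig $t$-analogue, hence is a polynomial. Once $P(t)\in\BZ[t]$ is known, your argument and the paper's coincide. To complete your proof you must either supply this polynomiality step (e.g.\ via the cited result) or replace the palindromicity shortcut by a direct computation of $[t^j]P(t)$ for $j>d/2$ from the recursion $(1-t^h)P(t)=(1-t^{h-1})\boldsymbol{W^0}(t)$ together with the full description of $[t^j]\boldsymbol{W^0}(t)$ for all $j$; the latter route is feasible but not as immediate as your write-up suggests.
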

\begin{proof}  Let $P(t)$ denote the right hand side of the formula. {\sl A priori}, $P(t)$ is only a rational 
function, and our first goal is to prove that $P(t)$ is a polynomial. 
Using the Kostant-Macdonald identity (cf. Theorem~\ref{thm:M-polinom-ab}), the
decomposition $\Delta^+=\Delta(0)^+\cup\Delta(1)\cup \{\theta\}$, and the equality $\hot(\theta)=h-1$,
we obtain
\beq  \label{eq:in-thm-1}
  P(t)=\frac{\bow(t)}{\boldsymbol{W\!(0)}(t)}\cdot \frac{1-t^{h-1}}{1-t^h}=
  {\boldsymbol{W^0}(t)}\cdot \frac{1-t^{h-1}}{1-t^h}={\boldsymbol{W^0}(t)}-t^{h-1}{\boldsymbol{W^0}(t)}
  \cdot \frac{1-t}{1-t^h} .
\eeq
It follows that $P(t)\in \BZ[t]$ if and only if 
${\boldsymbol{W^0}(t)}{\cdot} \displaystyle\frac{1-t}{1-t^h}\in \BZ[t]$. Recently, I proved that the latter 
is related to the {\it Lusztig $t$-analogue\/} of the zero weight multiplicity in $\sfr(\theta^\vee)$,
the representation of the dual Lie algebra $\g^\vee$ with highest weight $\theta^\vee$. Namely, let 
$\mathfrak M^0_{\theta^\vee}(t)$ be the above-mentioned $t$-analogue. It is a polynomial with 
nonnegative coefficients and $\mathfrak M^0_{\theta^\vee}(1)=m^0_{\theta^\vee}$, the respective 
weight multiplicity in $\sfr(\theta^\vee)$. 
By \cite[Cor.\,3.6]{all-q-an}, we have
\beq  \label{eq:in-thm-2}
    {\boldsymbol{W^0}(t)}{\cdot} \displaystyle\frac{1-t}{1-t^h}=
    \frac{\mathfrak M^0_{\theta^\vee}(t)} {t^{h-\hot(\theta^\vee)}} ,
\eeq
where $\hot(\theta^\vee)$ is the height of $\theta^\vee$ in the dual root system $\Delta^\vee$.
(Note that the indeterminate in \cite{all-q-an} is denoted by $q$ in place of $t$ and 
our $\boldsymbol{W^0}(t)$ is  $t_0(q)/t_\theta(q)$ therein.)
As the left hand side has no pole at $t=0$, both parts are polynomials in $t$, which proves that 
$P(t)$ is a polynomial, too.  Once we know that $P(t)$ is a polynomial, it follows from the very definition 
of it that $\deg P(t)=\#\Delta(1)=2h^*-4$ and $P(t)$ is palindromic.

It remains to prove that $[t^i]P(t)$ is the number of upper ideals of cardinality $i$.
Set $Q(t)=\displaystyle \frac{\mathfrak M^0_{\theta^\vee}(t)} {t^{h-\hot(\theta^\vee)}}$.
(Although we do not need it directly,  we note that $\deg \mathfrak M^0_{\theta^\vee}(t)=\hot(\theta^\vee)=
h^*-1$, hence $\deg Q(t)=2h^*-h-2$.)
By Eq.~\eqref{eq:in-thm-1} and \eqref{eq:in-thm-2}, we have
\beq    \label{eq:in-thm-3}
   P(t)={\boldsymbol{W^0}(t)}-t^{h-1}Q(t) .
\eeq
As $h-1\ge h^*-1$, it follows from Eq.~\eqref{eq:in-thm-3} that 
\[
     [t^i]P(t)=[t^i]{\boldsymbol{W^0}(t)}=\#\{w\in W^0\mid \ell(w)=i\} \quad \text{for $i\le h^*-2$}.
\]
If $w\in W^0$ and $\ell(w)\le h^*-2=\frac{1}{2}\#\Delta(1)$, then Theorem~\ref{thm:extra-case} implies
that
$\theta\not\in N(w)$ and $N(w)\subset \Delta(1)$. Hence for $i\le h^*-2$, $[t^i]P(t)$ equals the number of
upper ideals of cardinality $\#\Delta(1)-i$. Since $\mdt$ is palindromic and of degree $\#\Delta(1)$
(Lemma~\ref{lem:M-polinom-P(V)}), the latter is also
the number of upper ideals of cardinality $i$.  Thus, $\mdt$ and $P(t)$ are palindromic of equal degrees
$2h^*-4$, and $[t^i]P(t)=[t^i]\mdt$ for $i\le h^*-2$. Consequently, $P(t)\equiv \mdt$.
\end{proof}

\begin{cl}   \label{cor:card-extra}
In the extra-special case, we have 
$\#\anod=\eus M_{\Delta(1)}(1)=\prod_{\gamma\in\Delta(1)}\frac{\hot(\gamma)+1}{\hot(\gamma)}$.
\end{cl}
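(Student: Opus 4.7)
The corollary is an immediate consequence of Theorem~\ref{thm:mdt-extra}, so the plan is simply to evaluate both sides of that identity at $t=1$. The left-hand side is handled by unfolding the definition: since $\eus M_{\Delta(1)}(t)=\sum_{\Gamma\in\anod} t^{\#I(\Gamma)}$, setting $t=1$ gives $\eus M_{\Delta(1)}(1)=\#\anod$, because upper ideals of $\Delta(1)$ are in bijection with antichains via $\Gamma\mapsto I(\Gamma)$.

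For the right-hand side I would rewrite each factor as
\[
  \frac{1-t^{\hot(\gamma)+1}}{1-t^{\hot(\gamma)}} = \frac{1+t+\cdots+t^{\hot(\gamma)}}{1+t+\cdots+t^{\hot(\gamma)-1}},
\]
after cancelling the common factor $(1-t)$ in numerator and denominator. Each factor then specialises to $(\hot(\gamma)+1)/\hot(\gamma)$ at $t=1$, and the claimed product formula follows by taking the product over $\gamma\in\Delta(1)$.

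The only thing to be careful about is that the identity of Theorem~\ref{thm:mdt-extra} is an equality of polynomials in $t$ (as proved there via the identification with $\boldsymbol{W^0}(t)$ corrected by the Lusztig $t$-analogue term), so the specialisation $t=1$ is legitimate and there is no subtlety with $h^*$, $h$, or $\Pi_l$ in this step. Hence there is no real obstacle: the work has already been done in Theorem~\ref{thm:mdt-extra}, and this corollary is merely a numerical consequence.
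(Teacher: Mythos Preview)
Your proposal is correct and matches the paper's approach: the corollary is stated without proof immediately after Theorem~\ref{thm:mdt-extra}, since it follows by evaluating that identity at $t=1$, exactly as you describe.
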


\begin{rmk}  \label{rem:yet-another}
Yet another formula for $\mdt$, which follows from Eq.~\eqref{eq:in-thm-1} and \eqref{eq:in-thm-2}, is
\[
    \mdt=\frac{\mathfrak M^0_{\theta^\vee}(t)} {t^{h-\hot(\theta^\vee)}}\cdot
    \frac{1-t^{h-1}}{1-t} .
\]
The weight multiplicity $m^0_{\theta^\vee}=\mathfrak M^0_{\theta^\vee}(1)$ equals the number of short simple roots of $\g^\vee$, i.e.,
the number of long simple roots of $\g$, i.e., $\#\Pi_l$. Hence we again obtain the equality
$\#\anod=\eus M_{\Delta(1)}(1)=\#\Pi_l{\cdot}(h-1)$.
\par
If $\Delta\in \{\mathbf{ADE}\}$, then $\g^\vee\simeq\g$, \ $\hot(\theta^\vee)=h-1$, and
$\mathfrak M^0_{\theta^\vee}(t)=\sum_{i=1}^n t^{m_i}$, where $m_1,\dots,m_n$ are the exponents of
$W$. Here we obtain  a very simple explicit formula
\[
  \mdt=(\sum_{i=1}^n t^{m_i-1})(1+t+\ldots +t^{h-2}) .
\]
\end{rmk}

\begin{thm}   \label{thm:ndt-extra}
In the extra-special case,  $\deg\ndt\le 3$, i.e., if\/ $\Gamma\in\anod$, then
$\#\Gamma\le 3$. 
\end{thm}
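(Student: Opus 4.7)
The plan is a structural analysis of an antichain $\Gamma\subset\Delta(1)$ based on the pairwise relations of its elements, combined with the orthogonality constraint coming from all of $\Gamma$ lying on the affine hyperplane $(\cdot,\theta^\vee)=1$. I first show that any two distinct $\gamma,\gamma'\in\Gamma$ are either orthogonal (i.e., $(\gamma,\gamma')=0$) or form a \emph{Heisenberg pair} with $\gamma+\gamma'=\theta$. Indeed, $\gamma+\gamma'$ has $\theta^\vee$-pairing $2$, so if it is a root it must lie in $\Delta(2)=\{\theta\}$; and $\gamma-\gamma'$ has $\theta^\vee$-pairing $0$, so if it were a root it would belong to $\Delta(0)$, making $\gamma,\gamma'$ comparable in $\Delta(1)$ and contradicting the antichain hypothesis. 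Hence, unless $\gamma+\gamma'=\theta$, the pair is strongly orthogonal and in particular perpendicular.

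I would then show $\Gamma$ contains at most one Heisenberg pair: if $\{\gamma_1,\gamma_2\}$ and $\{\gamma_3,\gamma_4\}$ were two disjoint ones, the previous step applied to cross-pairs (a cross-Heisenberg $\gamma_1+\gamma_3=\theta$ is ruled out since it would force $\gamma_3=\gamma_2$) yields $(\gamma_3,\gamma_1)=(\gamma_3,\gamma_2)=0$, whence $(\gamma_3,\theta)=0$, contradicting $(\gamma_3,\theta^\vee)=1$. The same orthogonality argument rules out any third element of $\Gamma$ whenever a Heisenberg pair is present, so $\#\Gamma=2$ in the Heisenberg case. In the remaining case $\Gamma$ is a pairwise-orthogonal set of roots, each satisfying $(\gamma,\theta)=(\theta,\theta)/2$. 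Projecting $\theta$ orthogonally onto $\mathrm{span}(\Gamma)$ and using $\|\mathrm{pr}(\theta)\|^2\le\|\theta\|^2$ one obtains
\[
    k_l + r^2 k_s \,\le\, 4,
\]
where $k_l,k_s$ count the long and short members of $\Gamma$ and $r^2\in\{1,2,3\}$ is the squared-length ratio of long to short roots in $\Delta$; in particular $\#\Gamma\le 4$.

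The main obstacle is sharpening this bound from $\le 4$ to $\le 3$, i.e., excluding the equality case. Equality forces $\theta\in\mathrm{span}(\Gamma)$, with $\theta$ appearing as an explicit half-sum of the $\gamma$'s. One approach is to exploit the order-reversing involution $\sigma:\gamma\mapsto\theta-\gamma$ on $\Delta(1)$: $\sigma(\Gamma)$ is again a maximal orthogonal configuration, and if $\sigma(\Gamma)=\Gamma$ the fixed-point-free involution $\sigma$ partitions $\Gamma$ into two Heisenberg pairs, contradicting the previous paragraph; while if $\sigma(\Gamma)\cap\Gamma=\varnothing$, an explicit analysis of the $A_1^4$-configuration $\Gamma\cup\sigma(\Gamma)\subset\Delta(1)$ together with the structure of $\Delta(0)$ must produce a pair in $\Gamma$ whose difference lies in $\BZ_{\ge 0}\Pi(0)$, again contradicting the antichain property. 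A safe fallback — given that the extra-special gradings form a short explicit list — is the Sperner property of $\Delta(1)$ (Lemma~\ref{lem:sperner-sl2}, combined with Lemma~\ref{lem:disjoint-union} for the type $\mathbf{A}$ case where $\g(1)$ is not simple as a $\widetilde{\g(0)}$-module), which reduces the claim to checking that the maximum rank-level of $\Delta(1)$ has size at most $3$ in each case.
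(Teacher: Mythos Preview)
Your structural approach through Step~3 is correct and is a genuinely different route from the paper's. Classifying pairs in $\Gamma$ as orthogonal or Heisenberg, showing that a Heisenberg pair forces $\#\Gamma=2$, and then bounding a purely-orthogonal antichain by the projection inequality $k_l+r^2k_s\le 4$ --- all of this is clean and works for \emph{arbitrary} antichains, whereas the paper invokes the Sperner property (Lemma~\ref{lem:sperner-sl2}) at the very outset to reduce to a single rank level $\Delta(1)_{h/2}$ and only there establishes pairwise orthogonality.

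The gap is in excluding $\#\Gamma=4$. Your $\sigma$-involution sketch does not close: in the purely-orthogonal case one \emph{always} has $\sigma(\Gamma)\cap\Gamma=\varnothing$, since $(\gamma,\theta-\gamma)=(\theta,\theta)/2-(\gamma,\gamma)=-(\theta,\theta)/2\ne 0$ would violate orthogonality; so the first subcase is vacuous, and the promised ``explicit analysis'' of $\Gamma\cup\sigma(\Gamma)$ in the second subcase is not supplied. Your fallback via Sperner plus case-by-case inspection of rank levels is valid but weaker than what the paper does. The paper's key trick is that once Sperner has pinned you to the middle level $\Delta(1)_{h/2}$ (with $h$ even outside type~$\GR{A}{}$), the equality case $2\theta=\gamma_1+\gamma_2+\gamma_3+\gamma_4$ is killed \emph{uniformly} by comparing heights: $2\hot(\theta)=2h-2$ versus $4\cdot(h/2)=2h$. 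This height count only works because the Sperner reduction fixes $\hot(\gamma_i)=h/2$ for all $i$; your direct approach to arbitrary antichains forfeits exactly that information. Type~$\GR{A}{n}$ is handled separately in the paper by observing $\Delta(1)\simeq\eus C_{n-1}\sqcup\eus C_{n-1}$.
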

\begin{proof}
Recall that $\g(1)$ is a simple $\g(0)$-module unless $\Delta$ is of type $\GR{A}{n}$, and therefore
in all these cases $\Delta(1)$ is rank symmetric, rank unimodal, and Sperner (see 
Lemma~\ref{lem:sperner-sl2}). 
Therefore, $\deg\ndt=\#\Delta(1)_i$, where $\Delta(1)_i$ is
the set of roots of height $i$ in $\Delta(1)$ and 
$i$ is a middle rank of $\Delta(1)$. As $\hot(\theta)=h-1$ and the unique 
element of $\Delta^+$ covered by $\theta$ belongs to 
$\Delta(1)$, the roots in $\Delta(1)$ have the height between $1$ and $h-2$. Furthermore,
$h$ is even, if $\g$ is not of type $\GR{A}{n}$. Hence the two middle ranks are $(h-2)/2$ and $h/2$,
and $\deg\ndt=\#\Delta(1)_{h/2}=\#\Delta(1)_{(h-2)/2}$.
Assume that $\Delta(1)_{h/2}=\{\gamma_1,\dots,\gamma_k\}$. Since $\gamma_i\pm\gamma_j$ is not a 
root, all roots in $\Delta(1)_{h/2}$ are pairwise orthogonal. Consequently, $\mu:=\theta-\sum_{i=1}^k\gamma_i$ is a root.
Because $\theta\in\Delta(2)$ and $\gamma_i\in\Delta(1)$, we have $\mu\in\Delta(2-k)$. Therefore, 
$k\le 4$, and if $k=4$, then $\mu=-\theta$. However, comparing heights in the equality
$2\theta=\sum_{i=1}^4\gamma_i$, we see that this is impossible! (For, $2\hot(\theta)=2h-2$, whereas
$\sum_{i=1}^4\hot(\gamma_i)=2h$.) Thus, $k\le 3$.

For $\GR{A}{n}$, the poset $\Delta(1)$ is the disjoint union of two chains, 
$\Delta(1)=\eus C_{n-1}\sqcup \eus C_{n-1}$,
whence 
$\deg\ndt = 2$. 
\end{proof}

\noindent
Thus,  $\ndt=1+N_1t+N_2t^2+N_3t^3$, $N_1=\#\Delta(1)$, and $\eus N_{\Delta(1)}(1)=\#\anod$. 
Therefore, to completely determine $\ndt$,  only one more condition is needed.
Below, we compute $N_2$ in the {\bf ADE}-case and thereby provide a nice uniform expression for $\ndt$ . 
To this end,
we begin with a general look at the two-element antichains in $\Delta(1)$ (= $2$-antichains).

Let $\{\gamma_1,\gamma_2\}$ be a $2$-antichain in $\Delta(1)$. Then $(\gamma_1,\gamma_2)\le 0$ and there are two possibilities:
\vskip.7ex
${(a_1)}$  $(\gamma_1,\gamma_2)= 0$. \ Such an antichain is said to be {\it orthogonal}.

$(a_2)$  $(\gamma_1,\gamma_2)< 0$. \ Then $\gamma_1+\gamma_2=\theta$ and such an 
antichain is said to be {\it summable}.
\\[.7ex]
If $\{\gamma_1,\gamma_2\}$ is an orthogonal antichain, then $\gamma_1+\gamma_2-\theta$ is a root, necessarily in $\Delta(0)$; while in the second case,  $\gamma_1+\gamma_2-\theta=0$.
Therefore, one obtains a general map 
\[
    \varkappa: \anod_{\langle 2\rangle}\to \Delta(0)\cup\{0\} , 
\]
where $\anod_{\langle 2\rangle}$ is the set of $2$-antichains in $\Delta(1)$ and 
$\varkappa(\{\gamma_1,\gamma_2\})=\gamma_1+\gamma_2-\theta$. 

\begin{thm}  \label{thm:extra-ndt-ADE}
If \ $\Delta\in \{\mathbf{ADE}\}$, then {\rm\sf (i)} for any $\mu\in\Delta(0)$, there is a unique 
(orthogonal) $2$-antichain $\Gamma$ such that $\varkappa(\Gamma)=\mu$, {\rm\sf (ii)} the number 
of summable antichains is equal to $\rk\g-1$. \\ In particular, $\varkappa$ is onto and 
$N_2=\dim\g(0)-1=\dim\widetilde{\g(0)}$.
\end{thm}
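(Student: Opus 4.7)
The plan is to classify $2$-antichains in $\Delta(1)$ through Theorem~\ref{thm:next-extra}: for each $2$-antichain $\Gamma = \{\gamma_1, \gamma_2\}$, a suitable $w \in W^0$ (with $w(\theta) \in -\Delta^+$ in the Lagrangian case) sends $\gamma_i$ to distinct simple roots $\alpha := w(\gamma_1), \beta := w(\gamma_2) \in \Pi$. Since $\g$ is of type {\bf ADE}, $(\gamma_1, \gamma_2) = (\alpha, \beta) \in \{0, -1\}$. If $(\alpha, \beta) = -1$, then $\gamma_1 + \gamma_2 = w^{-1}(\alpha + \beta)$ is a root with $(\cdot, \theta^\vee) = 2$, which must equal $\theta$; this is the summable case. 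If $(\alpha, \beta) = 0$, then $\mu := \gamma_1 + \gamma_2 - \theta$ is a root with $(\mu, \theta^\vee) = 0$, hence $\mu \in \Delta(0)$; this is the orthogonal case.

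For (ii), summable antichains correspond to those $w \in W^0$ with $w(\theta) = \alpha + \beta$, a positive root of height two. Since $W(0)$ is the stabiliser of $\theta$ and all roots in {\bf ADE} are long, the map $w \mapsto w(\theta)$ is a bijection $W^0 \to \Delta$. Conversely, for any $w \in W^0$ with $w(\theta) = \alpha + \beta$ for adjacent simples, one checks that $\gamma_i := w^{-1}(\alpha), w^{-1}(\beta)$ lie in $\Delta(1)$ and that $\{\gamma_1, \gamma_2\}$ is an antichain: since $\alpha - \beta$ has mixed signs in $\Pi$ while $w(\Pi(0)) \subseteq \Delta^+$, the difference $\gamma_1 - \gamma_2 = w^{-1}(\alpha - \beta)$ cannot lie in $\pm\BN\,\Pi(0)$. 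Positive height-$2$ roots are sums $\alpha_i + \alpha_j$ of adjacent simples, in bijection with the edges of the Dynkin diagram; since {\bf ADE} diagrams are trees on $n$ vertices, their number is $n - 1 = \rk\g - 1$.

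For (i), orthogonal antichains correspond to data $(w, \{\alpha, \beta\})$ with $\alpha, \beta$ distinct non-adjacent simples and $w \in W^0$ satisfying $w^{-1}(\alpha), w^{-1}(\beta) \in \Delta(1)$; under this, $\varkappa(\Gamma) = w^{-1}(\alpha + \beta - w(\theta)) \in \Delta(0)$. For surjectivity of $\varkappa$ onto $\Delta(0)$, given $\mu$, I would use that $\g(1)$ is a faithful $\widetilde{\g(0)}$-module, together with the identity $\sum_{\gamma \in \Delta(1)}(\gamma, \mu) = (h^* - 2)(\theta, \mu) = 0$ and $(\gamma, \mu) \in \{-1, 0, 1\}$, to show $\Delta(1)_{\mu = 1}$ is nonempty, and then produce an orthogonal pair $\{\gamma_1, \gamma_2\}$ with $\gamma_2 = s_\mu(\theta - \gamma_1) \in \Delta(1)$ summing to $\theta + \mu$. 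The main obstacle is uniqueness: if two distinct orthogonal antichains both map to the same $\mu$, one must rule this out by analysing the difference $\eta := \gamma_1' - \gamma_1$, using that orthogonality of both pairs forces $(\gamma_1 - \gamma_2, \eta) = -\|\eta\|^2$ and combining this with the antichain conditions on both pairs. Alternatively, uniqueness may be deduced by counting: combining part (ii) with the independent identity $\#\anod = (h-1)n$ from Theorem~\ref{thm:extra-case} and the known values of $N_1$ and $N_3$ yields $N_2 = \dim\g(0) - 1$, forcing the $\varkappa$-fibres over $\Delta(0)$ to be singletons and giving the final equality $N_2 = \#\Delta(0) + (\rk\g - 1) = \dim\g(0) - 1$.
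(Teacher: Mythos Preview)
Your treatment of part~(ii) is genuinely different from the paper's and in fact more complete: you set up an explicit bijection between summable $2$-antichains and height-$2$ positive roots via $w\mapsto w(\theta)$, whereas the paper merely announces a bijection with the edges of the Dynkin diagram and defers the proof. Your argument is essentially correct; one small point to tighten is the verification that for $w\in W^0$ with $w(\theta)=\ap+\beta$ of height~$2$, the pair $\{w^{-1}(\ap),w^{-1}(\beta)\}$ is \emph{all} of $\min(I_w)$, not merely a $2$-antichain (this follows from $(\delta,\ap+\beta)=1$ for $\delta\in\Pi$ forcing $\delta\in\{\ap,\beta\}$ in the simply-laced case).

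Part~(i), however, has two real gaps. First, in your surjectivity argument: having found $\gamma_1\in\Delta(1)$ with $(\gamma_1,\mu)=1$ and set $\gamma_2=s_\mu(\theta-\gamma_1)=\theta+\mu-\gamma_1$, you correctly obtain $(\gamma_1,\gamma_2)=0$, but orthogonality does \emph{not} imply incomparability in $\Delta(1)$. Comparability means $\gamma_2-\gamma_1\in\pm\BN\Pi(0)$, which need not be a root. Concretely, in $\GR{D}{4}$ with $\mu=\ap_1$, the choice $\gamma_1=\ap_1+\ap_2$ gives $\gamma_2=\ap_1+\ap_2+\ap_3+\ap_4$, which is orthogonal to $\gamma_1$ yet strictly above it. A correct choice (here $\gamma_1=\ap_1+\ap_2+\ap_3$) exists, but you give no mechanism for selecting it. The paper avoids this by an inductive construction: it checks the base case (when $\mu$ is a highest root of an irreducible component of $\Delta(0)$) case-by-case, then shows that if $\varkappa^{-1}(\mu)\ne\varnothing$ and $\mu-\ap\in\Delta(0)^+\cup\{0\}$ for some $\ap\in\Pi(0)$, then $\varkappa^{-1}(\mu-\ap)\ne\varnothing$.

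Second, your ``alternative'' uniqueness-by-counting is circular: $N_3$ is not known independently---the paper computes $N_3$ only \emph{after} establishing $N_2$ via this theorem. Your direct uniqueness sketch is on the right track but underdeveloped. The paper's clean argument runs as follows: if $\gamma_1+\gamma_2=\gamma'_1+\gamma'_2=\theta+\mu$ with both pairs orthogonal, then expanding $(\gamma_1+\gamma_2,\gamma'_1+\gamma'_2)=\|\gamma_1\|^2+\|\gamma_2\|^2$ forces $(\gamma_i,\gamma'_j)>0$ for all $i,j$, so each $\gamma_i-\gamma'_j$ is a root in $\Delta(0)$; examining the sign of $\gamma'_1-\gamma_2$ then produces a chain among either $\{\gamma_1,\gamma_2\}$ or $\{\gamma'_1,\gamma'_2\}$, contradicting the antichain hypothesis.
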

\begin{proof}
The  argument below is not entirely case-free.
On the other hand, a complete case-by-case checking is also possible. For this
reason, we only outline some steps and their status ({\sf case-free} or {\sf case-by-case}).

{\bf 1.} \ If $\{\gamma_1,\gamma_2\}$ is an antichain in $\Delta(1)$, 
then so is $\{\theta-\gamma_1,\theta-\gamma_2\}$, and
$\varkappa(\{\gamma_1,\gamma_2\})=-\varkappa(\{\theta-\gamma_1,\theta-\gamma_2\})$.
Therefore $\mu\in\Ima(\varkappa)$ if and only if $-\mu\in\Ima(\varkappa)$, and it suffices to consider 
only $\mu\in\Delta(0)^+\cup\{0\}$.

{\bf 2.}  (Uniqueness) Assume that $\{\gamma_1,\gamma_2\}$ and $\{\gamma'_1,\gamma'_2\}$ lie
in $\varkappa^{-1}(\mu)$, i.e., $\gamma_1+\gamma_2=\gamma'_1+\gamma'_2=\theta+\mu$ for some
$\mu\in\Delta(0)^+$. All the roots involved have the same length and 
$(\gamma_1,\gamma_2)=(\gamma'_1,\gamma'_2)=0$. Since
$(\gamma_1,\gamma_1)+(\gamma_2,\gamma_2)=(\gamma_1+\gamma_2)^2=
   (\gamma_1+\gamma_2, \gamma'_1+\gamma'_2)$, 
we have $(\gamma_i,\gamma'_j)>0$ for all $i,j\in \{1,2\}$. In particular, 
$\gamma_i-\gamma'_j\in\Delta(0)$. Set $\nu=\gamma'_1-\gamma_1$ and 
$\eta=\gamma'_1-\gamma_2=\gamma_1-\gamma'_2$. W.l.o.g. we may assume that 
$\nu\in\Delta(0)^+$, hence $\gamma'_1\curge\gamma_1$ in $\Delta(1)$.
Now, if $\eta\in\Delta(0)^+$, then $\gamma'_1\curge\gamma_1\curge\gamma'_2$, i.e., 
$\{\gamma'_1,\gamma'_2\}$ is not an antichain; if $-\eta\in\Delta(0)^+$, then $\gamma_2\curge\gamma'_1\curge\gamma_1$, i.e., 
$\{\gamma_1,\gamma_2\}$ is not an antichain. These contradictions prove that 
$\#\varkappa^{-1}(\mu)\le 1$.

{\bf 3.} (Existence) \ (i) ({\it Base}) \ if $\mu$ is the highest root of  an irreducible subsystem of $\Delta(0)$, then
$\varkappa^{-1}(\mu)\ne\varnothing$. \  [{\sf case-by-case}]
\par
(ii) \ ({\it Induction step}) if $\varkappa^{-1}(\mu)\ne\varnothing$ and  $\mu-\ap\in \Delta(0)^+\cup\{0\}$ for some $\ap\in\Pi(0)$, 
then $\varkappa^{-1}(\mu-\ap)\ne\varnothing$.  \ [{\sf case-free}]
\\[.7ex]
These three steps prove everything concerning the orthogonal antichains, and also show that 
summable antichains exist; hence $\varkappa$ is onto.

 {\bf 4.} The assertion on the number of summable antichains occurs as a by-product of certain 
results of mine related to abelian $\be$-ideals in $\ut^+$. This will appear elsewhere. Actually, those 
results provide a one-to-one correspondence between the summable antichains and the edges of 
the Dynkin diagram. [{\sf case-free}]
\par  This completes our outline. 
\end{proof}   

As an illustration to the proof, we point out all summable antichains for
$\GR{E}{6}$. The numbering of simple roots is \  
$\text{\begin{E6}{1}{2}{3}{4}{5}{6}\end{E6}}$ \ and $(n_1n_2\dots n_6)$ stands for the root
$\gamma=\sum_{i=1}^6 n_i\ap_i$. In particular,  $\theta=(123212)$ and $\gamma\in\Delta(1)$ if and only if 
$n_6=1$. Then the summable antichains in $\Delta(1)$ are:\\
$\{111001, 012211\}, \{111101, 012111\}, \{111111, 012101\}, \{011111, 112101\}, \{001111, 122101\}$. 
\par
Note also that for $\GR{A}{n}$ and $\GR{D}{n}$, everything in Theorem~\ref{thm:extra-ndt-ADE} can 
explicitly be verified, using the usual $\{\esi_i\}$ presentation of the roots.

\begin{cl}  \label{cl:extra-ndt-ADE}
If \ $\Delta\in \{\mathbf{ADE}\}$, then
\beq    \label{eq:ndt-extra-ADE}
   \ndt=1+\dim\g(1){\cdot}t+(\dim\g(0)-1){\cdot}t^2+(\dim\g(1)-2\rk\g+2){\cdot}t^3 .  
\eeq
\end{cl}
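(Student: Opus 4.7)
The plan is to determine each coefficient of $\ndt = N_0 + N_1 t + N_2 t^2 + N_3 t^3$ separately. Three of the four are essentially already in hand from the preceding theorems; the corollary will drop out by computing $N_3$ from the identity $\ndt(1) = \#\anod$ and then doing a short dimension bookkeeping using the structure of an extra-special grading.

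First, $N_0 = 1$ (the empty antichain) and $N_1 = \#\Delta(1) = \dim\g(1)$, since every singleton is an antichain. Theorem~\ref{thm:extra-ndt-ADE} hands us $N_2 = \dim\g(0) - 1$ directly. Theorem~\ref{thm:ndt-extra} guarantees $\deg \ndt \le 3$, so nothing is omitted, and it remains only to identify $N_3$.

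In the $\mathbf{ADE}$ setting every simple root is long, hence Theorem~\ref{thm:extra-case}(ii) yields $\#\anod = n(h-1)$, where $n = \rk\g$. Therefore
\begin{equation*}
N_3 \;=\; \ndt(1) - 1 - N_1 - N_2 \;=\; n(h-1) - \dim\g(1) - \dim\g(0).
\end{equation*}
To rewrite this as $\dim\g(1) - 2\rk\g + 2$, combine two standard facts: the extra-special decomposition gives $\dim\g = 2 + 2\dim\g(1) + \dim\g(0)$ (since $\dim\g(\pm 2) = 1$ and $\dim\g(-1) = \dim\g(1)$), and in the $\mathbf{ADE}$ case $\dim\g = n(h+1)$ (equivalently, $\#\Delta = nh$). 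Solving the first for $\dim\g(0) = n(h+1) - 2\dim\g(1) - 2$ and substituting collapses the expression for $N_3$ to $\dim\g(1) - 2n + 2$, as required.

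No serious obstacle arises: the corollary is a bookkeeping consequence of Theorems~\ref{thm:extra-case}, \ref{thm:ndt-extra}, and~\ref{thm:extra-ndt-ADE}, together with the uniform identity $\dim\g = n(h+1)$ for $\mathbf{ADE}$. The only conceptual point worth flagging is that the degree bound of Theorem~\ref{thm:ndt-extra} is actually sharp precisely when $h > n+1$, which is compatible with $N_3 = 2(h-n-1) \ge 0$ and reduces to $N_3 = 0$ for type $\mathbf{A}_n$, in agreement with the fact that $\Delta(1)$ then splits as a disjoint union of two chains.
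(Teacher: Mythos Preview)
Your proof is correct and follows exactly the approach the paper intends: the text preceding the corollary already announces that $N_0,N_1,N_2$ are known and that ``only one more condition is needed'', namely $\ndt(1)=\#\anod$, and the paper invokes the same identity $\dim\g=(h+1)\rk\g$ immediately after the corollary. Your added remarks (that $N_3=2(h-n-1)$ and that this vanishes precisely for $\GR{A}{n}$) are a nice sanity check and match the paper's subsequent comment on the $\GR{A}{n}$ case.
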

\noindent
For $\GR{A}{n}$, we have $\dim\g(1)=2\rk\g-2=2n-2$ and $\deg\ndt=2$ (if $n>1$). 
It also follows from Eq.~\eqref{eq:ndt-extra-ADE} that \ 
$\displaystyle\frac{\eus N'_{\Delta(1)}(1)}{\eus N_{\Delta(1)}(1)}=
\frac{2\dim\g-6\rk\g}{\dim\g-2\rk\g}=\frac{2h-4}{h-1}$, \ since $\dim\g=(h+1)\rk\g$.

\begin{rem}
Some steps in the proof of Theorem~\ref{thm:extra-ndt-ADE} go through for any $\Delta$ (e.g. {\bf 1.}
and {\bf 2.}), but $\varkappa$ is no longer onto in general.
Although $\Ima(\varkappa)$ can explicitly be described in each non-simply laced case, we are unable to
infer from it a general characterisation of  $\Ima(\varkappa)$. Here is the list 
of $\eus N$-polynomials for the remaining root systems:

$\GR{B}{n}$, $n\ge 2$: \quad  $\ndt=1+2(2n-3)t+(n-2)(2n-3)t^2$;

$\GR{C}{n}$, $n\ge 2$: \quad  $\ndt=1+(2n-2)t$;

$\GR{F}{4}$: \quad  $\ndt=1+14t+7t^2$;

$\GR{G}{2}$: \quad  $\ndt=1+4t$;
\end{rem}

{\sl A posteriori},  for all irreducible root systems $\Delta$, we have
\beq     \label{eq:always-true-extra}
  \displaystyle\frac{\eus N'_{\Delta(1)}(1)}{\eus N_{\Delta(1)}(1)}=\frac{\#\Delta(1)}{h-1}=
  \frac{2h^*-4}{h-1} .
\eeq
As in the abelian case (cf. Theorem~\ref{thm:isom-poset-ab}), we provide a realisation of 
$(\anod, \le_{up})$ via the representation of $\g^\vee$ associated with the defining element of 
the $\BZ$-grading, which is now $\theta^\vee\in\Delta^\vee$. A new phenomenon is that 
$\sfr(\theta^\vee)$ is not \textsf{wmf}. Indeed, $\eus P(\theta^\vee)=\{0\}\cup (\Delta_l)^\vee$, i.e., it
contains zero and the short roots of $\Delta^\vee$. The multiplicity of $0$ equals $\#\Pi_l$, i.e., the 
number of short simple roots in $\Delta^\vee$, and all other weights are of multiplicity one (and form a 
sole $W$-orbit). The number of nonzero weights is $\# W(\theta^\vee)=\#W^0=h{\cdot}\#\Pi_l$. To adjust
it to the relationship between $\anod$ and $W^0$ occurring in Theorem~\ref{thm:extra-case}, 
we do the following:

\textbullet \quad forget about  $0\in \eus P(\theta^\vee)$, i.e., stick to $(\Delta_l)^\vee$;

\textbullet \quad identify $\ap^\vee$ and $-\ap^\vee$ in $(\Delta_l)^\vee$ for $\ap\in\Pi_l$.
\\
The resulting set is denoted by $(\Delta_l)^\vee\!/\!\sim$. It has the natural partial order induced from 
$(\eus P(\theta^\vee),\curle)$. For, we change nothing in the upper (positive) part
$(\Delta_l^+)^\vee$ and in the lower (negative) part $-(\Delta_l^+)^\vee$ of $\eus P(\theta^\vee)$. 
And we only identify "element-wise" the subsets $\min((\Delta_l^+)^\vee)=(\Pi_l)^\vee$ and 
$\max(-(\Delta_l^+)^\vee)=(-\Pi_l)^\vee$.

\begin{thm}    \label{thm:isom-poset-extra}
The posets $(\anod, \le_{up})$ and $((\Delta_l)^\vee\!/\!\sim, \curle)$ are naturally isomorphic. 
\end{thm}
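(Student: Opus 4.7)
The plan is to imitate the proof of Theorem~\ref{thm:isom-poset-ab}, adapting for the fact that $\tau$ is two-to-one on Lagrangian ideals and $\theta^\vee$ is no longer minuscule. First I will construct the bijection. Since $W(0)$ is the stabiliser of $\theta^\vee$ in $W$, the map $w\mapsto w(\theta^\vee)$ is a bijection $W^0\to W\theta^\vee=(\Delta_l)^\vee$. I will show this factors through the surjection $\tau:W^0\to\anod$ of Theorem~\ref{thm:extra-case} to yield a map $\Psi:\anod\to (\Delta_l)^\vee/\!\sim$, $\Psi(\Gamma_w)=[w(\theta^\vee)]$. The key observation is that for a Lagrangian $\Gamma_w$ with $\tau^{-1}(\Gamma_w)=\{w,ws_\theta\}$, Theorem~\ref{thm:extra-case}(ii) gives $w(\theta)\in\pm\Pi_l$, so $w(\theta^\vee)\in\pm(\Pi_l)^\vee$, and $ws_\theta(\theta^\vee)=-w(\theta^\vee)$; the two images are identified in the quotient and $\Psi$ is well-defined. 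A cardinality count $\#((\Delta_l)^\vee/\!\sim)=h\cdot\#\Pi_l-\#\Pi_l=(h-1)\cdot\#\Pi_l=\#\anod$ then confirms that $\Psi$ is a bijection.

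To check that $\Psi$ respects covers, suppose $\Gamma_w$ covers $\Gamma_{w'}$ in $(\anod,\le_{up})$, so $I_w=I_{w'}\cup\{x\}$ for some $x\in\Delta(1)$. I will choose representatives $w,w'\in W^0$ so that $\theta$ has the same status with respect to $N(w)$ and $N(w')$ (both contain $\theta$, or neither does). For non-Lagrangian ideals the choice is forced by Theorem~\ref{thm:extra-case}, and for Lagrangian ideals the matching representative is always available; a short case analysis over the sub-cases distinguished by whether $\#I_w$ and $\#I_{w'}$ exceed the middle value $h^*-2$ shows consistency. With this choice, $N(w')=N(w)\cup\{x\}$, so $w'=s_\ap w$ for a unique $\ap\in\Pi$ with $w^{-1}(\ap)=x\in\Delta(1)$; the key identity $(w(\theta^\vee),\ap)=(\theta^\vee,w^{-1}(\ap))=1$ then yields
\[
  w'(\theta^\vee)=s_\ap w(\theta^\vee)=w(\theta^\vee)-\ap^\vee,
\]
so $w(\theta^\vee)$ covers $w'(\theta^\vee)$ in $\eus P(\theta^\vee)$. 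I will verify that $w'(\theta^\vee)\ne 0$ (else $w'\in W^0\cap W(0)=\{e\}$, forcing $w=s_\ap$ and hence $\theta=\ap$, impossible outside type $\GR{A}{1}$, which is excluded from the extra-special setting), so this cover descends to a cover in $(\Delta_l)^\vee/\!\sim$. The converse direction follows by running the same computation backwards: given a cover $[\mu]$ over $[\mu']$ in the quotient with $\mu-\mu'=\ap^\vee$ for suitable representatives, the identity $(\theta^\vee,w^{-1}(\ap))=1$ recovers $w^{-1}(\ap)\in\Delta(1)$, and standard Bruhat-order facts give $s_\ap w=w'\in W^0$ with $\Gamma_w$ covering $\Gamma_{w'}$ in $\anod$.

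The main technical difficulty will be the uniform bookkeeping across the four Lagrangian/non-Lagrangian sub-cases of a cover in $\anod$, together with confirming that the quotient $(\Delta_l)^\vee/\!\sim$ as defined in the paper does not acquire spurious cover relations at the glued middle layer $(\Pi_l)^\vee=-(\Pi_l)^\vee$ beyond those produced by $\Psi$. Once these bookkeeping matters are in place, the crucial algebraic computation $w'(\theta^\vee)=w(\theta^\vee)-\ap^\vee$ is essentially identical to the one in the abelian proof, and the remainder is routine.
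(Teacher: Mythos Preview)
Your proposal is correct and follows essentially the same route as the paper: define $\Psi$ by composing $w\mapsto w(\theta^\vee)$ with the quotient, check well-definedness via Theorem~\ref{thm:extra-case}(ii), and verify cover-preservation by choosing representatives $w,w'$ with matching $\theta$-status so that $N(w')=N(w)\cup\{x\}$ and the identity $w'(\theta^\vee)=w(\theta^\vee)-\ap^\vee$ applies. Your case analysis over the Lagrangian/non-Lagrangian sub-cases is exactly the paper's $(a_1)$/$(a_2)$, and your closing remark about ``spurious covers at the glued middle layer'' corresponds to the paper's observation that the chosen $w_1(\theta^\vee),w_2(\theta^\vee)$ lie on the same side of $(\Delta_l)^\vee$; one small aside---your check that $w'(\theta^\vee)\ne 0$ is automatic, since $w'(\theta^\vee)$ is always a coroot.
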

\begin{proof}
In Theorem~\ref{thm:extra-case}, we have defined the surjective map $\tau: W^0\to \anod$. Recall that
$\#\tau^{-1}(\Gamma)\le 2$ and $\#\tau^{-1}(\Gamma)= 2$  if and only if 
$I(\Gamma)\in\eus J_+(\Delta(1))$  is Lagrangian. Moreover, if $\tau^{-1}(\Gamma)=\{w',w''\}$, then 
$w'(\theta)=-w''(\theta)\in \pm\Pi_l$. Therefore, 

{\bf --} \quad if $I(\Gamma)$ is Lagrangian, then $\tau^{-1}(\Gamma)(\theta)=\{\ap,-\ap\}$ for some
$\ap\in\Pi_l$;

{\bf --} \quad if $I(\Gamma)$ is not Lagrangian, then $\tau^{-1}(\Gamma)(\theta)=
\mu\in \Delta_l \setminus (\Pi\cup -\Pi)$.
\\
Thus, the map $\Psi: \anod \to (\Delta_l)^\vee\!/\!\sim$, \ 
$\Gamma \mapsto \tau^{-1}(\Gamma)(\theta)$, is well-defined and onto. Let us 
prove that $\Psi$ respects the partial orders. 

{\sf\bfseries (a)} \ Suppose that $\Gamma_1$ covers $\Gamma_2$ in $(\anod, \le_{up})$, i.e.,
$I(\Gamma_1)=I(\Gamma_2)\cup\{\gamma\}$ for some $\gamma\in\Delta(1)$. Our 
{\it\bfseries goal\/} is to obtain the
relation $N(w_2)=N(w_1)\cup\{\gamma\}$ for some $w_i\in \tau^{-1}(\Gamma_i)$. If this is the case,
then $w_2=s_\ap w_1$ and $\gamma=w_1^{-1}(\ap)$ for some $\ap\in\Pi$. Furthermore,
\[
    w_2(\theta^\vee)=s_\ap w_1(\theta^\vee)=w_1(\theta^\vee)-(w_1(\theta^\vee),\ap)\ap^\vee=
    w_1(\theta^\vee)-\ap^\vee ,
\]
which implies that $\Psi(\Gamma_1)$ covers $\Psi(\Gamma_2)$ in $(\Delta_l)^\vee\!/\!\sim$.
It is important here that both  $w_1(\theta^\vee)$ and $w_2(\theta^\vee)$ lie in
either $(\Delta^+_l)^\vee$ or $(-\Delta^+_l)^\vee$.

{\it\bfseries How to reach that goal}:
\par
${\bf (a_1)}$ If neither $I(\Gamma_1)$ nor $I(\Gamma_2)$ is Lagrangian, then $w_1,w_2$ are uniquely
determined and the required relation holds automatically. In particular, if  
$\#I(\Gamma_1)<\frac{1}{2}\#\Delta(1)$, then both $N(w_1)$ and $N(w_2)$ contain $\theta$ and 
$w_1(\theta^\vee), w_2(\theta^\vee)\in (\Delta^+_l)^\vee$; 
while if $\#I(\Gamma_2)>\frac{1}{2}\#\Delta(1)$, then both $N(w_1)$ and $N(w_2)$ do not contain 
$\theta$ and $w_1(\theta^\vee), w_2(\theta^\vee)\in (-\Delta^+_l)^\vee$.
\par
${\bf (a_2)}$ If one of the ideals is Lagrangian (note that there are two different possibilities for this), 
then the "non-Lagrangian" element $w_i$ is uniquely determined, and for the Lagrangian ideal 
$I(\Gamma_{\ov{i}})$ we choose the element $w_{\ov{i}}\in\tau^{-1}(\Gamma_{\ov{i}})$ such that 
$w_i(\theta)$ and $w_{\ov{i}}(\theta)$ have the same sign. This choice guarantee us that 
$N(w_i)$ and $N(w_{\ov{i}})$ simultaneously contain or do not contain $\theta$. 

{\sf\bfseries (b)} \ Conversely, if $w(\vp_i^\vee)$ covers  $w'(\vp_i^\vee)$ in 
$(\Delta_l)^\vee\!/\!\!\sim$, then, as in part {\sf\bfseries (a)}, the argument goes through along the lines 
presented in Theorem~\ref{thm:isom-poset-ab}, with amendments caused by the presence of 
$\{\theta\}=\Delta(2)$ and  relation `$\sim$' in $(\Delta_l)^\vee$.
We omit the details.
\end{proof}

\section{Conjectures and examples}
\label{sect:conj}

So far, almost nothing is said about the reverse operators for posets $\Delta(1)$. This will be fixed 
below. Numerous calculations performed in the abelian, extra-special, and some other cases suggest 
that the reverse operators $\xdo$ have very good properties similar to those of $\fX_{\Delta^+}$ 
(see Introduction), and also a new one. But outside the realm of the \textsf{wmf} representations associated 
with $\BZ$-gradings some of these properties certainly fail.

\begin{conj}   \label{conj:general1}
For any $\BZ$-grading of $\g$, we have $\displaystyle
   \eus M_{\Delta(1)}(t)=\prod_{\gamma\in\Delta(1)}\frac{1-t^{\hot(\gamma)+1}}{1-t^{\hot(\gamma)}}$.
In particular, $\#\anod=\eus M_{\Delta(1)}(1)=\prod_{\gamma\in\Delta(1)}\frac{\hot(\gamma)+1}{\hot(\gamma)}$.
\end{conj}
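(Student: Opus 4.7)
The plan is to imitate the strategy used for the abelian and extra-special gradings in Theorems~\ref{thm:M-polinom-ab} and~\ref{thm:mdt-extra} and push it to arbitrary $\BZ$-gradings. Since $\mdt$ is palindromic of degree $\#\Delta(1)$ (Lemma~\ref{lem:M-polinom-P(V)}), the identity $\sum_{I\in \eus J_+(\Delta(1))} t^{\#\Delta(1)-\#I} = \mdt$ is automatic. Combining this with the Kostant-Macdonald identity and the factorisation $\bow(t)=\boldsymbol{W^0}(t){\cdot}\boldsymbol{W(0)}(t)$ gives
\[
\boldsymbol{W^0}(t)=\prod_{\gamma\in\Delta^+\setminus \Delta(0)^+} \frac{1-t^{\hot(\gamma)+1}}{1-t^{\hot(\gamma)}}=\prod_{j\ge 1}\prod_{\gamma\in\Delta(j)}\frac{1-t^{\hot(\gamma)+1}}{1-t^{\hot(\gamma)}},
\]
so the conjecture is equivalent to the refined identity
\[
\boldsymbol{W^0}(t)\;=\;\mdt\cdot\prod_{j\ge 2}\prod_{\gamma\in\Delta(j)}\frac{1-t^{\hot(\gamma)+1}}{1-t^{\hot(\gamma)}}.
\]

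First, I would set up the natural map $\tau:W^0\to \eus J_+(\Delta(1))$ by $\tau(w)=\Delta(1)\setminus N(w)$. Exactly as in the proofs of Theorems~\ref{thm:ab-case} and~\ref{thm:extra-case}, the closedness of $N(w)$ and $\Delta^+\setminus N(w)$, together with the defining condition $w(\ap)\in\Delta^+$ for $\ap\in\Delta(0)^+$, forces $\Delta(1)\setminus N(w)$ to be an upper ideal of $\Delta(1)$. Surjectivity follows by extending any $I\in\eus J_+(\Delta(1))$ to a complementary pair of closed subsets of $\Delta^+$: take any closed set $A\supseteq \Delta(0)^+\cup I$ with $\Delta^+\setminus A\supseteq \Delta(1)\setminus I$, and invoke the Papi-type characterisation of inversion sets. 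The fibre $\tau^{-1}(I)$ thus records all admissible configurations of $N(w)\cap \Delta(\ge 2)$ compatible with the fixed level-one datum.

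The key step, and the main obstacle, is to analyse the fibre sum $F_I(t):=\sum_{w\in \tau^{-1}(I)} t^{\ell(w)}$. Because each such $w$ has $\ell(w)=\#\Delta(1)-\#I + \#(N(w)\cap \Delta(\ge 2))$, one has $F_I(t)=t^{\#\Delta(1)-\#I}\cdot G_I(t)$, where $G_I(t)$ is the generating function for the admissible configurations inside $\Delta(\ge 2)$. The abelian case is trivial ($G_I\equiv 1$), and the extra-special case treats $\Delta(2)=\{\theta\}$ with $G_I\in\{1,t,1+t\}$ and is handled via the Lusztig $t$-analogue of the zero weight space in $\sfr(\theta^\vee)$ (Eq.~\eqref{eq:in-thm-2}). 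In higher depth $G_I$ genuinely depends on $I$, so the identity cannot be proved fibrewise; one must show that
\[
\sum_{I\in\eus J_+(\Delta(1))} t^{\#\Delta(1)-\#I}\, G_I(t)\;=\;\mdt\cdot \prod_{j\ge 2}\prod_{\gamma\in\Delta(j)}\frac{1-t^{\hot(\gamma)+1}}{1-t^{\hot(\gamma)}}
\]
by a global cancellation, with palindromicity playing the same role it did in Theorems~\ref{thm:M-polinom-ab} and~\ref{thm:mdt-extra}.

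I see two plausible routes. Route (a): induct on the depth $k=\max\{j:\Delta(j)\ne\varnothing\}$, comparing the grading to a shallower auxiliary grading (for instance, collapsing $\Delta(k)$ into a refined $\Delta(k-1)$, or passing to a subalgebra whose induced grading has depth $k-1$, in the spirit of Remark~\ref{rem:g1-dostat}). Route (b): directly generalise the identity~\eqref{eq:in-thm-2}, replacing $\theta^\vee$ by a suitable weight of $\g^\vee$ built from the defining element $\tilde h$ that packages all higher-level contributions at once; the extra-special case, together with the abelian case where $\tilde h=\vp_i^\vee$ is minuscule and no correction is needed, fits this pattern. The hard part is in either case the same: taming the contribution of $\Delta(\ge 2)$, which is invisible in the abelian setting and a single Heisenberg-type correction in the extra-special setting, but has no obvious canonical description in general. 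Any proof will have to exhibit why this contribution is controlled precisely by $\prod_{j\ge 2}\prod_{\gamma\in\Delta(j)}(1-t^{\hot(\gamma)+1})/(1-t^{\hot(\gamma)})$ on the nose.
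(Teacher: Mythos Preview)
This statement is labelled \textbf{Conjecture} in the paper, and the paper does \emph{not} supply a proof of it in general. What the paper actually establishes is: the abelian case (Theorem~\ref{thm:M-polinom-ab}), the extra-special case (Theorem~\ref{thm:mdt-extra}), and the case where $\Delta(1)$ is a product of at most three chains, where the formula is just MacMahon's rank-generating function for plane partitions (Remark~\ref{rmk:plane-partit}). The paper explicitly leaves the general statement open and stresses that ``the main challenge is to provide conceptual proofs.'' So there is no ``paper's own proof'' against which to compare your proposal.

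Your write-up is not a proof either, and you are candid about this: you correctly set up the map $\tau:W^0\to\eus J_+(\Delta(1))$, correctly reduce the conjecture to the displayed identity involving $\boldsymbol{W^0}(t)$, and then arrive at precisely the obstruction the paper could not overcome, namely controlling the fibre polynomials $G_I(t)$ coming from $N(w)\cap\Delta({\ge}2)$. Your ``Route (a)'' and ``Route (b)'' are reasonable heuristics, but neither is carried out, and the extra-special argument relied on a very specific external input (the identity from \cite{all-q-an} relating $\boldsymbol{W^0}(t)$ to a Lusztig $t$-analogue for a single highest root) that has no known analogue when $\Delta({\ge}2)$ is larger than one root. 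In particular, your suggestion to replace $\theta^\vee$ by ``a suitable weight of $\g^\vee$ built from $\tilde h$'' is exactly the missing idea: no such identity is available in the literature, and finding one would already be a substantial advance.

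In short: your outline matches the paper's viewpoint and correctly localises the difficulty, but it does not close the gap, and neither does the paper. The statement remains a conjecture.
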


This conjecture readily reduces to $1$-standard gradings. {\sl First}, the $\g(0)$-module $\g(1)$ is  determined by $\Pi(1)$ and does not depend on $\Pi({\ge}2)$. Having removed from the Dynkin diagram the nodes (simple roots) in $\Pi({\ge}2)$, we get a standard $\BZ$-grading of a semisimple subalgebra
$\es\subset\g$, with the same poset $\Delta(1)$. {\it Second}, each simple $\es(0)$-submodule of
$\es(1)$ is associated with a $1$-standard $\BZ$-grading of a certain simple factor of $\es$, and we can use the relevant assertion of Lemma~\ref{lem:disjoint-union}.

In view of Theorems~\ref{thm:M-polinom-ab} and \ref{thm:mdt-extra}, Conjecture~\ref{conj:general1} 
holds for the abelian and extra-special gradings. Furthermore, it is true if $\Delta(1)$ is the direct product of at most three chains, see Remark~\ref{rmk:plane-partit}. This covers all but one $1$-standard
$\BZ$-gradings for series $\GR{A}{n}$, $\GR{B}{n}$, and $\GR{C}{n}$, see Example~\ref{ex:flaas}.
Conjecture~\ref{conj:general1} has also a natural counterpart for arbitrary irreducible \textsf{wmf} representations. 
If $\eus P(V)$ is the weight poset of an irreducible \textsf{wmf} representation $V$ and 
$r: \eus P(V) \to \BN$ is the tuned rank function, then one might suggest that 
\beq    \label{eq:arb-wmf-suggest}
   \eus M_{\eus P(V)}(t)=\prod_{\nu\in\eus P(V)}\frac{1-t^{r(\nu)+1}}{1-t^{r(\nu)}} \ \text{ and hence } \ 
   \#\eus P(V)=\prod_{\nu\in\eus P(V)}\frac{r(\nu)+1}{r(\nu)} .
\eeq
However, it can happen for some $V$ that the first product is {\bf not} a polynomial and the second 
product in {\bf not} an integer, see Example~\ref{ex:boolean-cubes}(1) below.

An interesting feature of the polynomials $\mdt$ is that they seem to provide a nice 
illustration to the ``$t=-1$ phenomenon'' of Stembridge~\cite{stembr}, which is a particular case of the 
cyclic sieving phenomenon~\cite{sagan}. Recall that, for any $I\in \eus J_+(\Delta(1))$, we 
have defined the dual ideal $I^*=\Delta(1)\setminus w_0(I)$, where $w_0\in W(0)$ is the longest 
element. 

\begin{conj}   \label{conj:general1.5}
The value $\eus M_{\Delta(1)}(-1)$ equals the number of upper ideals $I$ such that $I^*=I$.
\end{conj}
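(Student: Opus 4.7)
The plan is a two-stage approach: first reducing to $1$-standard $\BZ$-gradings with $\g(1)$ a simple $\g(0)$-module, then verifying the $t=-1$ phenomenon case by case using Stembridge's machinery.

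For the reduction, I would observe that the involution $I\mapsto I^*$ on $\eus J_+(\Delta(1))$ is compatible with the disjoint-union decomposition $\Delta(1)=\bigsqcup_j\Delta(1)_j$ into weight posets of the simple $\g(0)$-submodules of $\g(1)$. Indeed, $W(0)$ preserves each $\Delta(1)_j$ (being a union of $W(0)$-orbits), so $w_0$ acts componentwise and $\ast$ restricts to an involution on each factor. Combined with Lemma~\ref{lem:disjoint-union}, both $\mdt(-1)$ and the self-dual ideal count factor over $j$. A further reduction by removing the nodes in $\Pi({\ge}2)$ (as justified after Conjecture~\ref{conj:general1}) preserves $\Delta(1)$, $W(0)$, and the involution $\ast$, giving a $1$-standard $\BZ$-grading of a simple Lie algebra with $\g(1)$ simple.

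For the abelian case, Theorem~\ref{thm:isom-poset-ab} identifies $(\anod,\le_{up})$ with the minuscule weight poset $\eus P(\vp_i^\vee)$. Under this isomorphism, the involution $\ast$ corresponds to the natural complementation $J\mapsto \eus P(\vp_i^\vee)\setminus w_0'(J)$, where $w_0'$ is the longest element of $W$ stabilising $\vp_i^\vee$. Stembridge's $t=-1$ phenomenon for minuscule posets~\cite{stembr} then yields the equality directly. For the extra-special case, I would use the product formula of Theorem~\ref{thm:mdt-extra} together with Remark~\ref{rem:yet-another} in type $\mathbf{ADE}$ (where $\mdt=(\sum_i t^{m_i-1})(1+t+\cdots+t^{h-2})$, making $\mdt(-1)$ transparent) and the case-by-case non-$\mathbf{ADE}$ expressions. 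Self-dual ideals must have cardinality $h^*-2$, hence are Lagrangian, and can be enumerated via the surjection $\tau:W^0\to\eus J_+(\Delta(1))$ of Theorem~\ref{thm:extra-case}: the condition $I=I^*$ translates to a Weyl-theoretic symmetry on $w\in W^0$ that should match the count extracted from $\mdt(-1)$. The remaining $1$-standard cases with $\g(1)$ simple (classified by Vinberg) would need a direct case-by-case check using the poset data behind Example~\ref{ex:sl_n-simplest} and the classification tables.

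The main obstacle is the absence of a uniform mechanism beyond Stembridge's minuscule framework. Pairing $I$ with $I^*$ yields $(1+(-1)^{\#\Delta(1)})(-1)^{|I|}$, so naive sign cancellation kills non-fixed pairs only when $\#\Delta(1)$ is odd (in which case both sides of the conjecture trivially vanish). When $\#\Delta(1)$ is even, self-dual ideals have cardinality $\#\Delta(1)/2$, and one needs an auxiliary sign-reversing involution on non-$\ast$-fixed ideals of this middle cardinality. Constructing such an involution uniformly across all $\BZ$-gradings---even granting Conjecture~\ref{conj:general1}---appears to require genuinely new input, most likely a crystal-theoretic or geometric model of $\Delta(1)$ extending the minuscule picture, which is the central difficulty.
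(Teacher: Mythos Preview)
The statement you are attempting to prove is a \emph{conjecture} in the paper, not a theorem: the paper offers no proof. Immediately after stating it, the author writes only that Stembridge's results confirm the assertion in the abelian case, and that ``using the formula in Remark~\ref{rem:yet-another}, we can also prove it in the extra-special case''; the general case is explicitly left open, with the remark that ``the main challenge is to provide conceptual proofs.'' So there is no paper proof to compare against---only these two hints about partial results.

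Your plan is consistent with those hints. For the abelian case you invoke Stembridge exactly as the paper does; for the extra-special case you propose to use the product formula of Theorem~\ref{thm:mdt-extra} and the explicit expression in Remark~\ref{rem:yet-another}, which is again what the paper indicates. Your proposed reduction to $1$-standard gradings and your final paragraph correctly diagnose the situation: the parity argument disposes of odd $\#\Delta(1)$, but in the even case one needs a sign-reversing involution on non-self-dual ideals, and no uniform construction of such an involution is known. This is precisely the gap the paper acknowledges; your ``case-by-case check'' for the remaining $1$-standard gradings is in principle feasible (the list is finite up to series, and the posets are explicit), but it would not constitute the conceptual argument the author is asking for, and you have not carried it out.

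One technical point in your abelian sketch deserves care: under the isomorphism of Theorem~\ref{thm:isom-poset-ab}, you assert that the involution $\ast$ on $\eus J_+(\Delta(1))$ transports to Stembridge's complementation involution on $\eus J_+(\eus P(\vp_i^\vee))$. This is plausible but not automatic---$\ast$ is built from the longest element $w_0$ of $W(0)$ acting on $\Delta(1)$, whereas Stembridge's involution uses the longest element of the full Weyl group of $\g^\vee$ acting on the minuscule weight poset. Verifying that these match under $\Psi$ requires tracing through the bijection, and is where the actual content of the abelian case lies.
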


\noindent
Results of \cite{stembr} confirm this assertion in the abelian case. Using the formula in Remark~\ref{rem:yet-another},
we can also prove it in the 
extra-special case. But, the main challenge is to provide conceptual proofs for all (at least, some of) 
the previous and subsequent conjectures!

\begin{conj}   \label{conj:general2}
Let $\g=\bigoplus_{i\in\BZ}\g(i)$ be a $1$-standard $\BZ$-grading (hence $\g(1)$ is a simple $\g(0)$-module and $\widetilde{\g(0)}$ is semisimple). 
\begin{itemize}
\item[\sf\bfseries (i)] \ If\/ $d_1=\max\{\hot(\gamma)\mid \gamma\in \Delta(1)\}$, then $\ord(\xdo)=d_1+1$.
\item[\sf\bfseries (ii)] \ the average value of the size of antichains in any $\xdo$-orbit is the same and 
equals \\ $\displaystyle\frac{\dim\g(1)}{d_1+1}=\frac{\#\Delta(1)}{\ord(\xdo)}$;
\item[\sf\bfseries (iii)] \ the average value of the size of upper ideals in any $\xdo$-orbit is the same and 
equals $\dim\g(1)/2$.
\end{itemize}
\end{conj}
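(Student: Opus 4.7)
The plan is to establish (i)--(iii) first in the two cases where an explicit model of $(\anod,\le_{up})$ is already available from Sections~\ref{sect:ab-case}--\ref{sect:ext-case}, and then to isolate what new input is needed in general. Since $\g(1)$ is a simple $\g(0)$-module, Remark~\ref{rem:g1-dostat} forces the grading to be $1$-standard with $\Pi(1)=\{\ap_i\}$ for a unique $i$, so the statement splits naturally into the abelian, extra-special, and ``other middle-node'' cases.

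\textbf{Abelian case.} I would transport $\xdo$ across the isomorphism $\Psi\colon\Gamma_w\mapsto w(\vp_i^\vee)$ of Theorem~\ref{thm:isom-poset-ab} and verify that, on the minuscule orbit $W\vp_i^\vee$, the operator $\xdo$ is realized by left multiplication by a Coxeter element $c\in W$. Concretely, the description $\fX(\Gamma_w)=\max(\Delta(1)\setminus I_w)=-w^{-1}(\Pi)\cap\Delta(1)$ from Theorem~\ref{thm:ab-case}(iii) should, after passage to $W\vp_i^\vee$, match the action of the product of simple reflections in an appropriate fixed order. Part~(i) then follows because $c$ acts freely on $W\vp_i^\vee$ with order $h$, and a direct check gives $d_1+1=h$ on each minuscule poset. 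Parts~(ii) and~(iii) reduce to the Propp--Roby toggleability-is-homomesic principle: writing $\xdo$ as a product of toggles along a linear extension, each toggle $\tau_x$ has the same average across an $\xdo$-orbit, so summing over $x\in\Delta(1)$ yields~(iii) with mean $\#\Delta(1)/2$, and the parallel argument applied to the statistic $\#\min(I)$ gives~(ii). These are in substance the minuscule specialisations of Rush--Shi's homomesy results.

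\textbf{Extra-special case.} Here $d_1=h-2$, since the heights of roots in $\Delta(1)$ run from $1$ to $h-2$ while $\theta\in\Delta(2)$; the target order is $h-1$. Via Theorem~\ref{thm:isom-poset-extra} I would realize $\xdo$ on $(\Delta_l)^\vee/\!\sim$ as induced by a Coxeter-type element $\tilde c\in W$. The drop from $h$ to $h-1$ should originate in the collapse $\{w,ws_\theta\}\mapsto\Gamma_w$ over Lagrangian antichains of Theorem~\ref{thm:extra-case}(ii), which is precisely what the identification $\ap^\vee\sim -\ap^\vee$ on $(\Pi_l)^\vee$ does in the target poset, killing one orbit of the naive length-$h$ rotation. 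Parts~(ii) and~(iii) would again follow from the toggle-homomesy machinery, using the explicit counts $\#\anod=\#\Pi_l\cdot(h-1)$ and $\#\Delta(1)=2h^*-4$ from Section~\ref{sect:ext-case} together with the rank symmetry and unimodality of $\Delta(1)$ (Lemma~\ref{lem:sperner-sl2}).

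\textbf{Main obstacle.} The central difficulty is the remaining $1$-standard gradings — the middle-node gradings of $\GR{E}{7},\GR{E}{8},\GR{F}{4}$, and several classical gradings making $\Delta(1)$ a product of $\ge 4$ chains — for which no parametrization of $\anod$ by $W$-data and no embedding into a weight poset of $\g^\vee$ is presently known, so the Coxeter-element realisation of $\xdo$ is not directly available. My proposal for a uniform route is to lift $\xdo$ to piecewise-linear or birational rowmotion on the ambient root poset $\Delta^+$ and then restrict it to $\Delta(1)$: periodicity should follow from an Einstein--Propp-style fixed-point argument, and the homomesy identities in~(ii),~(iii) would descend from the standard $\#I$ and $\#\min(I)$ averaging identities already valid at the piecewise-linear level. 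Verifying that such a restriction is genuinely well-behaved on $\Delta(1)$, and in particular that its period equals $d_1+1$ rather than some multiple thereof, is the principal open point.
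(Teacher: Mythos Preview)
The statement you are attempting to prove is labelled \emph{Conjecture} in the paper, not Theorem, and the paper contains no proof of it. What follows the statement is a sequence of motivating examples and partial confirmations: Example~\ref{ex:stand-orbit} exhibits the ``standard'' orbit of rank levels in any graded poset, Example~\ref{ex:e6-e7} reports hand computations for three abelian cases, Example~\ref{ex:flaas} invokes Fon-der-Flaass's theorem that $\ord(\fX_{\eus C_k\times\eus C_m})=k+m$ to confirm part~(i) for the many $1$-standard gradings with $\Delta(1)$ a product of two chains, and Example~\ref{ex:3-chains} cites the Cameron--Fon-der-Flaass result for $\eus C_{(2,m,n)}$. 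Remarks~\ref{rmk:average-N} and~\ref{rmk:average-M} then record the consistency checks \eqref{eq:always-true-ab} and \eqref{eq:always-true-extra}. That is the full extent of what the paper offers; none of (i)--(iii) is established uniformly, and the concluding remarks explicitly flag part~(ii) as ``the most vulnerable one''.

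Your proposal is therefore not comparable to a proof in the paper but is rather a sketch of how one might try to prove the conjecture. The abelian portion is reasonable in spirit --- the minuscule case is indeed covered by the Rush--Shi periodicity result \cite{r-s13}, and toggle-based homomesy arguments of Propp--Roby type do establish (ii) and (iii) there --- though you should be aware that this machinery postdates the paper and is not what the paper does. Your extra-special sketch is considerably more speculative: you assert that $\xdo$ transported to $(\Delta_l)^\vee/\!\sim$ is realised by a ``Coxeter-type element'', but neither the paper nor your outline supplies this identification, and the paper itself only states it as a further conjecture (Conjecture~\ref{conj:extra}) that each orbit has size $h-1$. Finally, your own ``Main obstacle'' paragraph correctly identifies the genuine gap: for the remaining $1$-standard gradings there is no model of $\anod$ via $W$, and your suggestion to lift to birational rowmotion and then restrict to $\Delta(1)$ is a hope rather than an argument --- in particular you give no reason why such a restriction should be $\xdo$-equivariant or why its period should be exactly $d_1+1$. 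In short, you have outlined a plausible research programme, but the conjecture remains open in the paper and your proposal does not close it.
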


Part (iii) above is a new property that has no counterpart for $\eus P=\Delta^+$.
\\
In the abelian (resp. extra-special) case, we have $d_1=h-1$ (resp. $d_1=h-2$). Therefore, 
Conjecture~\ref{conj:general2}(i) claims that in these cases $\ord(\xdo)$ is equal to $h$ and $h-1$, 
respectively. 

\begin{ex}   \label{ex:stand-orbit} (cf. \cite[Lemma\,1.1]{oper-X})
Let $\eus P=\bigsqcup_{j=1}^d \eus P_i$ be a graded poset such that
$\max(\eus P)=\eus P_d$ and $\min(\eus P)=\eus P_1$, then $\fX_\eus P$ always has an orbit of size
$d+1$. Namely, we have $\fX_\eus P(\eus P_i)=\eus P_{i-1}$ (with $\eus P_0=\varnothing$) and 
$\{\varnothing, \eus P_d,\dots,\eus P_1\}$ is an $\fX_\eus P$-orbit. 
Moreover, the average value of the size of antichains in this orbit equals $\#\eus P/(d+1)$. Even more,
if $\eus P$ is rank symmetric, then the average value of the size of upper ideals in this orbit
equals $\#\eus P/2$. 
\par
Since all these properties hold for $\eus P=\Delta(1)$, we get a motivating example for the whole Conjecture~\ref{conj:general2}.
\end{ex}
\begin{ex}  \label{ex:e6-e7} 
Straightforward computations for some abelian gradings show that
\par
\textbullet\quad For $\g=\GR{E}{6}$ and $\Pi(1)=\{\ap_1\}$ (item~6 in Section~\ref{sect:ab-case}),  
$\xdo$ has three orbits of sizes 12,\,12,\,3. We also know here that $\#\anod=27$. Hence $\ord(\xdo)=12$.
\par
\textbullet\quad 
For $\g=\GR{E}{7}$ and $\Pi(1)=\{\ap_1\}$ (item~7 in Section~\ref{sect:ab-case}),  $\xdo$ has four orbits of sizes 18,\,18,\,18,\,2. We also know here that $\#\anod=56$. Hence $\ord(\xdo)=18$. 
\par
\textbullet\quad For $\g=\GR{D}{n}$ and $\Pi(1)=\{\ap_1\}$ (item~5 in Section~\ref{sect:ab-case}), we
denote $\Delta(1)$ by $\eus D_{n-1}$. The corresponding Hasse diagram is

\begin{center}   $\eus H(\eus D_{n-1})$: \qquad
\begin{picture}(230,25)(0,3)
\multiput(10,10)(20,0){2}{\color{MIXT}\circle*{4}}
\multiput(70,10)(20,0){2}{\color{MIXT}\circle*{4}}
\multiput(130,10)(20,0){2}{\color{MIXT}\circle*{4}}
\multiput(190,10)(20,0){2}{\color{MIXT}\circle*{4}}
\multiput(110,0)(0,20){2}{\color{MIXT}\circle*{4}}

\multiput(11,10)(60,0){4}{\vector(1,0){18}}

\multiput(91,11)(20.5,-10.5){2}{\vector(2,1){18}}
\multiput(91,9)(20.5,10.5){2}{\vector(2,-1){18}}

\put(45,7){$\cdots$}
\put(165,7){$\cdots$}
\put(-5,5){{\small $\ap_1$}}
\put(215,5){{\small $\theta$}}
\end{picture}
\end{center}
\vskip.5ex\noindent
(The number of nodes is $2n-2$.) Here $\#\anod=2n$ and $\xdo$  has two orbits of sizes $2$ and 
$2n-2$, i.e., $\ord(\xdo)=2n-2$.
\\
The other assertions of Conjecture~\ref{conj:general2} are also satisfied in these three cases.
\end{ex}

\begin{ex}  \label{ex:flaas}
Recall that $\eus C_k$ is a $k$-element chain. For  $\eus P=\eus C_k\times \eus C_m$, 
Fon-der-Flaass proved that $\ord(\fX_{\eus P})=k+m$, see~\cite[Theorem\,2]{flaas1}. This confirms 
Conjecture~\ref{conj:general2}(i) for all $1$-standard $\BZ$-gradings with $\Delta(1)\simeq
\eus C_k\times \eus C_m$, because it is then clear that the highest weight of $\Delta(1)$ (=\,the root of maximal height) has height
$k+m-1$. In particular, this happens for all
but one $1$-standard $\BZ$-gradings in types $\GR{A}{n}$,
$\GR{B}{n}$, and $\GR{C}{n}$. More precisely, 

\textbullet \quad If $\Pi(1)=\{\ap_i\}$ for $\GR{A}{n}$, then 
$\Delta(1)\simeq \eus C_{i}\times \eus C_{n+1-i}$ and the highest weight in $\Delta(1)$ is $\theta$.

\textbullet \quad If $\Pi(1)=\{\ap_i\}$ for $\GR{B}{n}$, then 
$\Delta(1)\simeq \eus C_{i}\times \eus C_{2n+1-2i}$.

\textbullet \quad If $\Pi(1)=\{\ap_i\}$ for $\GR{C}{n}$ and $i<n$, then 
$\Delta(1)\simeq \eus C_{i}\times \eus C_{2n-2i}$.
\\
However, $\Delta(1)$ has another structure for the remaining case in $\GR{C}{n}$ and all
$1$-standard gradings of $\GR{D}{n}$. If $\Pi(1)=\{\ap_i\}$ for $\GR{D}{n}$ and $i\le n-2$, then
$\Delta(1)\simeq \eus C_{i}\times \eus D_{n-i}$. Note, however, that $\eus D_2\simeq
\eus C_2\times \eus C_2$.
\end{ex}

\begin{ex}  \label{ex:3-chains} 
For the poset $\eus C_k\times \eus C_m\times \eus C_n=:\eus C_{(k,m,n)}$, it is proved 
in~\cite[Theorem\,6(b)]{flaas2}
that $\ord(\fX)=k+m+n-1$ if $k=2$. (The general case is open!) 
In real life, posets of the form $\eus C_{(2,m,n)}$ occur in connection 
with $1$-standard $\BZ$-gradings corresponding to the branching node of the Dynkin diagram for 
$\GR{D}{n}$ or $\GR{E}{n}$. Namely,

\textbullet \quad If $\Pi(1)=\{\ap_{n-2}\}$ for $\GR{D}{n}$, then 
$\Delta(1)\simeq \eus C_{(2,2,n-2)}$;

\textbullet \quad If $\Pi(1)=\{\ap_3\}$ for $\GR{E}{6}$, then 
$\Delta(1)\simeq \eus C_{(2,3,3)}$;

\textbullet \quad If $\Pi(1)=\{\ap_4\}$ for $\GR{E}{7}$, then 
$\Delta(1)\simeq \eus C_{(2,3,4)}$;

\textbullet \quad If $\Pi(1)=\{\ap_5\}$ for $\GR{E}{8}$, then 
$\Delta(1)\simeq \eus C_{(2,3,5)}$.
\\
This confirms Conjecture~\ref{conj:general2}(i) for all these cases. The marked Dynkin diagrams for 
$\GR{D}{5}$ and  $\GR{E}{n}$ are depicted below (the black node represents the simple root in $\Pi(1)$):

\quad $\GR{D}{5}$: 
\begin{picture}(55,30)(-5,5)
\setlength{\unitlength}{0.013in}
\put(35,18){\circle*{5}}
\put(35,3){\circle{5}}
\multiput(5,18)(15,0){4}{\circle{5}}
\multiput(8,18)(15,0){3}{\line(1,0){9}}
\put(35,6){\line(0,1){9}}
\end{picture}
\quad
$\GR{E}{6}$: 
\begin{picture}(70,30)(-5,5)
\setlength{\unitlength}{0.013in}
\put(35,18){\circle*{5}}
\put(35,3){\circle{5}}
\multiput(5,18)(15,0){5}{\circle{5}}
\multiput(8,18)(15,0){4}{\line(1,0){9}}
\put(35,6){\line(0,1){9}}
\end{picture}
\quad
$\GR{E}{7}$: 
\begin{picture}(85,30)(-5,5)
\setlength{\unitlength}{0.013in}
\put(50,18){\circle*{5}}
\put(50,3){\circle{5}}
\multiput(5,18)(15,0){6}{\circle{5}}
\multiput(8,18)(15,0){5}{\line(1,0){9}}
\put(50,6){\line(0,1){9}}
\end{picture}
\quad
$\GR{E}{8}$: 
\begin{picture}(100,30)(-5,3)
\setlength{\unitlength}{0.013in}
\put(65,18){\circle*{5}}
\put(65,3){\circle{5}}
\multiput(5,18)(15,0){7}{\circle{5}}
\multiput(8,18)(15,0){6}{\line(1,0){9}}
\put(65,6){\line(0,1){9}}
\end{picture}
\vskip.7ex\noindent
By the general rule, the subdiagram of white nodes represents $\g(0)$, and the bonds through the black node determine the $\g(0)$-module $\g(1)$, see Remark~\ref{rem:g1-dostat}.
\end{ex}
\begin{rmk}    \label{rmk:plane-partit}
An upper (lower) ideal in $\eus C_{(k,m,n)}$ can be identified with a plane partition with at most 
$k$ rows, at most $m$ columns, and with each entry $\le n$. Therefore, in case of $\eus C_{(k,m,n)}$,
our $\eus M$-polynomial is nothing but the rank-generating function for such plane partitions.
In \cite[Theorem\,11.2]{andrews}, one finds a closed formula for that generating function, which goes back to MacMahon.
Letting $(t)_r=(1-t)(1-t^2)\ldots(1-t^r)$, one has
\beq    \label{eq:formula-andrews}
\eus M_{\eus C_{(k,m,n)}}=\frac{(t)_1(t)_2\ldots(t)_{k-1}(t)_{m+n}(t)_{m+n+1}\ldots(t)_{m+n+k-1}}{
(t)_m(t)_{m+1}\ldots(t)_{m+k-1}(t)_n(t)_{n+1}\ldots(t)_{n+k-1}} .
\eeq
Substituting $t=1$, one obtains
\[
     \#\AN(\eus C_{(k,m,n)})=\frac{1!2!\dots(k-1)!(m+n)!(m+n+1)!\dots (m+n+k-1)!}{
   m!(m+1)!\dots (m+k-1)!n!(n+1)!\dots (n+k-1)!    } .
\]
The last formula appears in \cite[p.\,553]{flaas2}, where the relationship between plane partitions and 
antichains in $\eus C_{(k,m,n)}$ is also alluded to. In \cite[p.81]{macd95}, one finds the 
rank-generating function for $\eus C_{(k,m,n)}$ exactly in the form suggested by 
Eq.~\eqref{eq:arb-wmf-suggest}.
\end{rmk}

\begin{rmk}   \label{rmk:average-N}
If the average value of the size of antichains in all $\xdo$-orbits is the same, then it must be equal to
the average value of the size of {\bf all} antichains in $\Delta(1)$. By the very definition of $\ndt$, the 
latter average value equals $\eus N'_{\Delta(1)}(1)/\eus N_{\Delta(1)}(1)$.
Therefore, if Conjecture~\ref{conj:general2}(i),(ii) is true, then one must have the equality
\[
    \frac{\eus N'_{\Delta(1)}(1)}{\eus N_{\Delta(1)}(1)}=\frac{\#\Delta(1)}{d_1+1} .
\]
Our previous calculations show that this is really the case for the abelian and extra-special gradings, see
Eq.~\eqref{eq:always-true-ab} and \eqref{eq:always-true-extra}.
\end{rmk}
\begin{rmk}   \label{rmk:average-M}
If the average value of the size of upper ideals in all $\xdo$-orbits is the same, then it must be equal to
the average value of the size of {\bf all} upper ideals in $\Delta(1)$. By the very definition of $\mdt$, the 
latter average value equals $\eus M'_{\Delta(1)}(1)/\eus M_{\Delta(1)}(1)$. Since $\mdt$ is palindromic,
the last fraction is equal to $\frac{1}{2}\deg\mdt=\frac{1}{2}\dim\g(1)$. This explains the average value
in Conjecture~\ref{conj:general2}(iii).
\end{rmk}

We proved that  $\#\anod=\#\Pi_l{\cdot}(h-1)$ in the extra-special case (Theorem~\ref{thm:extra-case}(ii)). 
Combined with the conjectural value $\ord(\xdo)=h-1$ and some explicit calculations,
this suggests the following:

\begin{conj}   \label{conj:extra}
In the extra-special case, 
the number of\/ $\fX_{\Delta(1)}$-orbits equals $\#\Pi_l$, and each orbit is of size $h-1$.
Furthermore, if $h$ is even (which only excludes the case of $\GR{A}{2k}$, where $h=2k+1$), 
then each $\xdo$-orbit contains a unique Lagrangian upper ideal.
\end{conj}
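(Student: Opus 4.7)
The plan is to establish the orbit structure via a cyclic sieving phenomenon (CSP) and to derive the Lagrangian statement as a corollary.

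First, I would produce one orbit of the predicted length. Applying Example~\ref{ex:stand-orbit} to $\eus P = \Delta(1)$, whose ranks run from $1$ to $h-2$, the antichains
\[
  \varnothing,\ \Delta(1)_{h-2},\ \Delta(1)_{h-3},\ \ldots,\ \Delta(1)_1
\]
form a single $\xdo$-orbit of length $h-1$. In particular $\ord(\xdo) \ge h-1$, and it suffices to prove that \emph{every} $\xdo$-orbit has size exactly $h-1$; the orbit count $\#\Pi_l$ is then forced by the equality $\#\anod = \#\Pi_l \cdot (h-1)$ of Theorem~\ref{thm:extra-case}(ii).

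Second, the core of the argument would be the CSP statement that $(\anod, \langle \xdo \rangle, \mdt)$ is a cyclic sieving triple with respect to a cyclic group of order $h-1$. The motivation comes from the factorisation in Remark~\ref{rem:yet-another},
\[
  \mdt \;=\; \frac{\mathfrak M^0_{\theta^\vee}(t)}{t^{\,h-\hot(\theta^\vee)}}\, \bigl(1+t+\ldots+t^{h-2}\bigr),
\]
whose rightmost factor vanishes at every nontrivial $(h-1)$-th root of unity. Granting CSP, one gets
\[
  \#\bigl\{\Gamma \in \anod : \xdo^d(\Gamma) = \Gamma\bigr\} \;=\; \mdt\bigl(e^{2\pi i d/(h-1)}\bigr) \;=\; 0 \quad \text{for } 0 < d < h-1,
\]
so no element is fixed by any nontrivial power of $\xdo$ and every orbit has length exactly $h-1$. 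The natural route to CSP is to transport $\xdo$ across the isomorphism $\Psi$ of Theorem~\ref{thm:isom-poset-extra} to an operator $\widetilde{\xdo}$ on $(\Delta_l)^\vee\!/\!\sim$ and identify it with the descent of a Coxeter-type element of $W$ acting on the $W$-orbit of $\theta^\vee$, parallel to the argument of \cite{ast13} for $\fX_{\Delta^+}$. Pinning down this identification explicitly is the principal obstacle, and is the reason the statement is still a conjecture.

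Third, the Lagrangian assertion when $h$ is even follows quickly once every orbit is known to have size $h-1$. The standard orbit above already contains a Lagrangian ideal, namely $I = \Delta(1)_{\ge h/2}$: its cardinality is $h^*-2 = \frac{1}{2}\#\Delta(1)$ by rank symmetry, and any relation $\gamma_1+\gamma_2 = \theta$ with $\gamma_i \in \Delta(1)$ gives $\hot(\gamma_1)+\hot(\gamma_2) = h-1$, which is odd and therefore forbids both heights from being $\ge h/2$; so $I$ is Lagrangian by Lemma~\ref{lm:prelim-extra}. Since the total number of Lagrangian ideals equals $\#\Pi_l$ by Theorem~\ref{thm:extra-case}(ii) --- exactly the number of orbits --- the uniqueness claim reduces to showing no orbit contains two Lagrangians. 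In the model of Theorem~\ref{thm:isom-poset-extra} the Lagrangian ideals correspond precisely to the $\#\Pi_l$ identified pairs $\{\ap^\vee,-\ap^\vee\}$, $\ap\in\Pi_l$, and I expect the explicit Coxeter-type description of $\widetilde{\xdo}$ to distribute these one per orbit by the same root-system mechanism that governs the orbit structure itself.
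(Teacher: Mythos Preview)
The paper does not prove this statement: it is explicitly labelled a \emph{conjecture}, and the only supporting evidence offered is that it ``is readily verified for $\GR{C}{n}$ or $\GR{G}{2}$, where $\#\Pi_l=1$, and the case of $\GR{A}{n}$ is easy'', together with the remark that $\GR{F}{4}$ and $\GR{E}{6}$ can be checked by hand. So there is no proof in the paper for your proposal to be compared against.

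Your outline is a sensible strategy and you are candid that the crux --- identifying $\xdo$, via the isomorphism $\Psi$ of Theorem~\ref{thm:isom-poset-extra}, with a Coxeter-type element on $(\Delta_l)^\vee\!/\!\sim$ --- is exactly the missing ingredient. Two points are worth sharpening. First, your CSP step is slightly circular as written: to speak of $(\anod,\langle\xdo\rangle,\mdt)$ as a cyclic sieving triple for a group of order $h-1$ you must already know $\xdo^{\,h-1}=\mathrm{id}$, whereas step one only gives $\ord(\xdo)\ge h-1$. The honest order of business is to establish $\xdo^{\,h-1}=\mathrm{id}$ directly (presumably from the Coxeter-element identification), deduce $\ord(\xdo)=h-1$, and only then invoke CSP to force every orbit to have full length. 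Second, in your Lagrangian paragraph the verification that $I=\Delta(1)_{\ge h/2}$ is Lagrangian is correct, but the citation should be to Definition~\ref{def:lagrangian} rather than Lemma~\ref{lm:prelim-extra}; the lemma is not used there. The pigeonhole finish (number of Lagrangian ideals $=\#\Pi_l=$ number of orbits, so ``at most one per orbit'' implies ``exactly one per orbit'') is fine, though of course the ``at most one'' step again awaits the Coxeter description.
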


\noindent
This conjecture is readily verified for $\GR{C}{n}$ or $\GR{G}{2}$, where $\#\Pi_l=1$, and the case 
of $\GR{A}{n}$ is easy.  It is also possible to perform all necessary calculations by hand for 
$\GR{F}{4}$ and $\GR{E}{6}$. 

Our formulae for $\eus N$-polynomials in Sections~\ref{sect:ab-case} and \ref{sect:ext-case} show 
that they are not always palindromic. Clearly, if $\eus N_\eus P(t)$ is palindromic, then it is monic, 
hence $\eus P$ has a unique antichain of maximal size. Therefore, if $\eus P$ is Sperner 
and  $\eus N_\eus P(t)$ is palindromic, then $\eus P$ has a unique rank level of maximal size.
It is plausible that, for the weight posets of the form $\Delta(1)$, this necessary condition is also sufficient:

\begin{conj}    \label{conj:n-palindrom}
Let $\g=\bigoplus_{i\in\BZ}\g(i)$ be a $1$-standard $\BZ$-grading. Then $\ndt$ is palindromic if and only
if $\Delta(1)$ has a unique rank level of maximal size.  
\end{conj}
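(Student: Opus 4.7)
The forward direction is straightforward. Since $[t^0]\ndt = 1$ (the empty antichain), palindromicity forces the leading coefficient of $\ndt$ to equal $1$, so there is a unique antichain of maximum size $d := \deg\ndt$. By Lemma~\ref{lem:sperner-sl2}, $\Delta(1)$ is Sperner, so $d$ equals the size of the largest rank level. Each rank level of maximum size is itself an antichain of size $d$; uniqueness of the maximum antichain thus forces uniqueness of the maximum rank level.

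For the converse, my plan would be to exhibit a size-complementary involution on $\anod$, i.e., a bijection $\Gamma \mapsto \Gamma^\diamond$ with $\#\Gamma + \#\Gamma^\diamond = d$. The natural starting point is the $*$-involution of Lemma~\ref{lem:M-polinom-P(V)}: $I \mapsto I^* = \Delta(1) \setminus w_0(I)$ descends to antichains via $\Gamma \mapsto \min(I(\Gamma)^*)$, and this minimum equals $w_0(\xdo(\Gamma))$ because $w_0$ reverses the partial order on $\Delta(1)$. This map is not size-complementary in general, but one would hope to combine it with the uniqueness of the maximum rank level to identify an additional symmetry that yields the required pairing. Finding the right symmetry conceptually is the main obstacle.

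Failing a uniform construction, the fall-back plan is case-by-case verification via the classification of $1$-standard $\BZ$-gradings (i.e., marked Dynkin diagrams of simple $\g$). For many gradings in the classical types, $\Delta(1)$ is a product of two chains $\eus C_a \times \eus C_b$ (see Example~\ref{ex:flaas}) with
\[
   \ndt = \sum_{i \ge 0} \genfrac{(}{)}{0pt}{}{a}{i}\genfrac{(}{)}{0pt}{}{b}{i} t^i ,
\]
which is palindromic iff $a = b$, matching precisely the uniqueness of the central rank level of $\eus C_a \times \eus C_b$. Branching-node gradings on $\GR{D}{n}$ and $\GR{E}{n}$ give $\Delta(1) \simeq \eus C_{(2,m,n)}$, whose $\eus N$-polynomial is computable from MacMahon's product~\eqref{eq:formula-andrews}. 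The remaining exceptional gradings form a short finite list; the abelian and extra-special entries among them are already covered by Sections~\ref{sect:ab-case} and~\ref{sect:ext-case}.

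The principal obstacle is conceptual rather than computational: none of the established properties of $\Delta(1)$ (rank symmetry, rank unimodality, the Sperner and strongly Sperner properties, the $\tri$-action of Remark~\ref{rem:dynkin}) visibly implies that uniqueness of the maximum rank level forces palindromicity of $\ndt$. A satisfying proof would likely come from an additional structure, perhaps the orbit-symmetry of $\xdo$ predicted by Conjecture~\ref{conj:general2}, which would itself need to be established first.
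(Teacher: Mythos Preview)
The statement is labeled a \emph{Conjecture} in the paper and is not proved there. The paper's only contribution toward it is the sentence immediately preceding the conjecture, which is precisely your forward implication: palindromicity of $\ndt$ forces it to be monic, hence there is a unique antichain of maximal size, and since $\Delta(1)$ is Sperner (Lemma~\ref{lem:sperner-sl2}) this yields a unique rank level of maximal size. Your argument for that direction is correct and matches the paper's remark.

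For the converse there is nothing in the paper to compare against; the author only confirms it for the abelian and extra-special gradings via the explicit $\eus N$-polynomials computed in Sections~\ref{sect:ab-case} and~\ref{sect:ext-case}, and explicitly leaves the general $1$-standard case open. Your proposal is candid in acknowledging the same gap. Two concrete issues with your fall-back sketch, however: first, MacMahon's product~\eqref{eq:formula-andrews} is the $\eus M$-polynomial of $\eus C_{(k,m,n)}$, not the $\eus N$-polynomial, so it does not directly yield what you need for the branching-node cases. Second, your enumeration of cases is incomplete: for $\GR{D}{n}$ with $\Pi(1)=\{\ap_i\}$ and $3\le i\le n-2$ one has $\Delta(1)\simeq \eus C_i\times\eus D_{n-i}$ (Example~\ref{ex:flaas}), which is neither abelian, extra-special, nor a product of chains; and several $1$-standard gradings of $\GR{E}{n}$ (e.g.\ Example~\ref{ex:Z-grad-outside}, where $\Delta(1)\simeq\eus P(\GR{A}{7},\varpi_3)$) fall outside every family you list. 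So even the case-by-case route, as you have outlined it, does not cover the full conjecture.
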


\noindent
Our formulae confirm this conjecture in the abelian and extra-special cases.

\begin{ex}   \label{ex:boolean-cubes}
($\BZ$-gradings versus periodic gradings)
\par
1) For $\g=\mathfrak{so}_8$ (type $\GR{D}{4}$), the extra-special $\BZ$-grading corresponds
to the branching node of the Dynkin diagram, i.e., $\Pi(1)=\{\ap_2\}$. Hence
$\widetilde{\g(0)}=(\tri)^3$ and  $\g(1)$ is the tensor product of the standard 
representations of three copies of $\tri$. Therefore,  $\Delta(1)$ is the 3-dimensional Boolean algebra 
$\mathbb B^3 \simeq \eus C_{2,2,2}$. Here $\#\AN(\mathbb B^3)=20$,
$\eus H_{\mathbb B^3}(t)=1+8t+9t^2+2t^3$, and $h-1=5$. In fact, all the relevant assertions of the above conjectures are satisfied here. In particular, $\ord(\fX_{\mathbb B^3})=5$, 
$\AN(\mathbb B^3)$ consists of four $\fX_{\mathbb B^3}$-orbits of size $5$, and both average values are constant, as prescribed.
\par
On the other hand, $\mathfrak{so}_8$ has a $\BZ_2$-grading such that $\g_0=(\tri)^4$ and 
$\g_1$ is the tensor product of the standard representations of four copies of
$\tri$. Hence $\Delta_1$ is the 4-dimensional Boolean algebra $\mathbb B^4$.
The poset $\mathbb B^4$ has five rank levels and the corresponding level 
cardinalities $\#(\mathbb B^4)_i$ are $1,4,6,4,1$. Here the hypothetical product formula for
$\#\AN(\Delta_1)$, see Eq.~\eqref{eq:arb-wmf-suggest}, yields the output 
\[
      \frac{2^1 3^4 4^6 5^4 6^1}{1^1 2^4 3^6 4^4 5^1 }=500/3 =166,66... \ , 
\]
which is absurd.
Furthermore, the hypothetical product formula for $\eus M_{\Delta_1}(t)$ gives a rational function 
that is not a polynomial. Thus, one cannot always expect the validity of Eq.~\eqref{eq:arb-wmf-suggest} 
if $V$ is not associated with a $\BZ$-grading.   
\par 
It is known that $\#\AN(\mathbb B^4)=168$ and the sizes of $\fX_{\mathbb B^4}$-orbits are
$2,3,6$, see e.g.~\cite[p.\,73]{deza}. Therefore $\ord(\fX_{\mathbb B^4})=6$, which agrees with
Conjecture~\ref{conj:general2}(i). But, the average value
of sizes of antichains along $\fX_{\mathbb B^4}$-orbits is not constant here! Let us regard elements of
$\mathbb B^4$ as subsets of $\{1,2,3,4\}$. For the orbit of $\Gamma_1=\{\varnothing\}$, 
the above average value is $8/3$, while for the orbit of $\Gamma_2=
\{\{1,2\},\{3,4\}\}$, the average value is $3$. Furthermore, explicit computations show that
\[
    \eus N_{\mathbb B^4}(t)=1+16t+55t^2+64t^3+25t^4+6t^5+t^6 .
\]
That is, whereas $\mathbb B^4$ has a unique rank level of maximal size, $\eus N_{\mathbb B^4}(t)$ is not palindromic.

2) The choice of the branching node $\ap_i$ in the {\bf extended} Dynkin diagram $\GRt{E}{n}$ ($n=6,7,8$) provides a $\BZ_d$-grading with $d=[\theta:\ap_i]$ such that $\g_0=
\mathfrak{sl}_k\times\mathfrak{sl}_m\times\mathfrak{sl}_n$ and 
$\g_1=\sfr(\varpi_1)\otimes\sfr(\varpi'_1)\otimes \sfr(\varpi''_1)$. Therefore, $\Delta_1\simeq
\eus C_{(k,m,n)}$. Here $(k,m,n)=(3,3,3)$, $(2,4,4)$, $(2,3,6)$ for 
$\GR{E}{6}$, $\GR{E}{7}$, $\GR{E}{8}$, respectively, see pictures below. The respective values of
$d$ are $3,4,6$.

\qquad $\GRt{E}{6}$: 
\begin{picture}(70,40)(-5,-5)
\setlength{\unitlength}{0.013in}
\put(35,18){\circle*{5}}
\multiput(35,-12)(0,15){2}{\circle{5}}
\multiput(5,18)(15,0){5}{\circle{5}}
\multiput(8,18)(15,0){4}{\line(1,0){9}}
\multiput(35,-9)(0,15){2}{\line(0,1){9}}
\end{picture}
\quad
$\GRt{E}{7}$: 
\begin{picture}(100,30)(-5,5)
\setlength{\unitlength}{0.013in}
\put(50,18){\circle*{5}}
\put(50,3){\circle{5}}
\multiput(5,18)(15,0){7}{\circle{5}}
\multiput(8,18)(15,0){6}{\line(1,0){9}}
\put(50,6){\line(0,1){9}}
\end{picture}
\quad
$\GRt{E}{8}$: 
\begin{picture}(115,30)(-5,5)
\setlength{\unitlength}{0.013in}
\put(80,18){\circle*{5}}
\put(80,3){\circle{5}}
\multiput(5,18)(15,0){8}{\circle{5}}
\multiput(8,18)(15,0){7}{\line(1,0){9}}
\put(80,6){\line(0,1){9}}
\end{picture}
\vskip1.5ex\noindent
By the general rule, the subdiagram of white nodes represents $\g_0$, and the bonds through the black node determine the $\g_0$-module $\g_1$, see \cite[\S\,8]{vi76}, \cite[Ch.\,3,\,\S 3.7]{t41} for details.
 \par
 As explained in Remark~\ref{rmk:plane-partit}, Conjecture~\ref{conj:general1} 
 (or Eq.~\eqref{eq:arb-wmf-suggest}) does provide here the $\eus M$-polynomial of $\Delta_1$.
However, at least in the $\GR{E}{6}$-case, one detects two $\fX_{\Delta_1}$-orbits with different values for the average size of antichains. Thus, Conjecture~\ref{conj:general2}(ii) fails there.
\end{ex}

\begin{ex}   \label{ex:Z-grad-outside}
For the 1-standard $\BZ$-grading of $\GR{E}{8}$ with $\Pi(1)=\{\ap_8\}$, we have $\widetilde{\g(0)}=\GR{A}{7}=\mathfrak{sl}_8$ and
$\g(1)=\sfr(\varpi_3)$, the third fundamental representation of  $\mathfrak{sl}_8$. Then $\dim\g(1)=\genfrac{(}{)}{0pt}{}{8}{3}=56$ 
and the poset $\Delta(1)=\eus P(\varpi_3)$ has 16 rank levels whose sizes are\\
$1,\,1,\,2,\,3,\,4,\,5,\,6,\,6,\,6,\,6,\,5,\,4,\,3,\,2,\,1,\,1$. Here the  formulae of Conjecture~\ref{conj:general1}
provide a polynomial in $t$ with nonnegative coefficients and an integer! That is, conjecturally, it should
be true that $\#\anod=11{\cdot}13{\cdot}17=2431$ 
and $\ord(\fX)=17$. 
\end{ex}

{\it \bfseries Concluding remarks.}
In this section, we formulated a bunch of conjectures for various properties of weight posets.
I have every confidence in validity of all these conjectures for the weight posets of the form $\Delta(1)$.
Moreover, some of the conjectures may have a wider range of applicability. 
For instance, the posets $\eus C_{(k,m,n)}$ are related to  periodic or $\BZ$-gradings only for a limited set of triples $(k,m,n)$. (All those instances essentially appear in Examples~\ref{ex:3-chains} and \ref{ex:boolean-cubes}(2).) But the formula for the $\eus M$-polynomial in Conjecture~\ref{conj:general1}
applies to all such posets, see Remark~\ref{rmk:plane-partit}. 
\par Furthermore, it is likely that, for all posets
$\eus P=\eus C_{(k,m,n)}$, the order of $\fX_\eus P$ equals $k+m+n-1$, which agrees with Conjecture~\ref{conj:general2}(i). (At least, this is proved for $k=2$ in \cite{flaas2}.)
\par
On the other hand, the property on the average value of the size of antichains in $\fX$-orbits 
(see Conjecture~\ref{conj:general2}(ii)) seems to be the most vulnerable one. Our examples suggest that it fails once we leave the variety of weight posets related to $\BZ$-gradings.

\vskip1ex
{\small  {\bf Acknowledgements.}
This is an expanded version of my talk at the workshop ``Non-crossing partitions'' (Bielefeld, June 
2014). I would like to thank the organisers for possibility to speak there. While preparing my talk,
I dug up my old notes on the subject, made several new observations, and even produced some proofs.
}

\end{document}